\newtheorem{theorem}{Theorem}
\newtheorem{algorithm}[theorem]{Algorithm}
\newtheorem{corollary}[theorem]{Corollary}
\newtheorem{definition}[theorem]{Definition}
\newtheorem{example}[theorem]{Example}
\newtheorem{lemma}[theorem]{Lemma}
\newtheorem{proposition}[theorem]{Proposition}
\newtheorem{remark}[theorem]{Remark}
\newenvironment{proof}[1][Proof]{\noindent\textbf{#1.} }{\ \rule{0.5em}{0.5em}}
\newcommand{\beq}{\begin{equation}}
\newcommand{\eeq}{\end{equation}}
\newcommand\eq{\eqref}
\newcommand\mfm{\mathcal F}
\newcommand\mc{\mathcal M}
\newcommand\emc{\widetilde\mc}
\newcommand\pv{\K[t]^n}
\newcommand\pmat{\K[t]^{n\times n} }
\newcommand\slnk{SL_n(\K) }
\newcommand\bv{\bar \vv }
\newcommand\vv{ {\mathbf v} } 
\newcommand\cv{ V } 
\newcommand\uu{ {\mathbf u} }
\newcommand\bb { {\mathbf b}}
\newcommand\ba { {\mathbf a}}
\newcommand\ww {{\mathbf{w}}}
\newcommand\hh{{\mathbf{h}}}
\newcommand\ch{ H } 
\newcommand\len{\eta}
\newcommand\dg{ \delta }
\newcommand{\gva}{\ensuremath{{G}}} 
\newcommand{\g}{\ensuremath{{g}}} 
\newcommand{\zva}{\ensuremath{\mathcal{Z}}} 
\newcommand{\yva}{\ensuremath{\mathcal{Y}}} 
\def\pz{{z} }
\def\bpz{\overline{\pz} }
\newcommand{\act}{\alpha}
\def\syz{\operatorname{syz}}
\def\rank{\operatorname{rank}}
\def\bez{B\'{e}zout } 
\def\mub{$\mu$-basis } 
\def\QS{Quillen-Suslin }
\def\K{\mathbb K}
\def\R{\mathbb R}
\def\Q{\mathbb Q}
\def\KK{\overline{\K}}
\def\VV{\overline{V}}
\def\l{L}  
\def\cc{\mathfrak c}
\def\rc{\mathcal C}
\def\rv{\mathcal V}
\def\cs{\mathcal K}
\def\orb{\mathcal O}
\def\pW{\mathbf W}
\def\pF{\mathbf F}
\def\pG{\mathbf G}
\def\pM{\mathbf M}
\def\pQ{\mathbf Q}
\def\pU{\mathbf U}
\def\pP{\mathbf P}
\def\qq{$\Box$}
\begin{document}

\title{Equi-affine minimal-degree moving frames \\for polynomial curves}
\author{Hoon Hong\\North Carolina State University \\hong@ncsu.edu\\\url{https://hong.math.ncsu.edu/} \and  Irina A. Kogan\\North Carolina State University\\iakogan@ncsu.edu\\\url{https://iakogan.math.ncsu.edu/}}

\maketitle
\abstract{We develop a theory and an algorithm for constructing minimal-degree polynomial  moving frames for polynomial curves in an affine space. The algorithm is equivariant under  volume-preserving affine transformations of the ambient space and the parameter shifts. We show that any matrix-completion algorithm can be turned into an equivariant  moving frame algorithm via an equivariantization procedure that we develop.  
We prove that if a matrix-completion algorithm is of minimal degree then so is the resulting equivariant moving frame algorithm.   We propose a novel minimal-degree matrix-completion algorithm, complementing the existing body of literature on this topic.}

{\bf Keywords:} polynomial moving frames, polynomial curves, equivariance, affine transformations, parameter shifts, matrix completion, \bez vectors, $\mu$-bases. 
\tableofcontents
\section{Introduction}
 In  this paper, we use the term  \emph{moving frame} in its original meaning, namely   a movable frame of reference (a coordinate system), which is better adapted to the geometric or physical nature  of a problem than a fixed coordinate system.
 This notion played a central role in the study of smooth curves and surfaces in differential geometry.
  
 The Frenet frame, consisting of the unit tangent and  unit normal assigned at each point  along a planar curve, is, perhaps, the most prominently known example.  It  has \emph{two crucial geometric properties}. The \emph{first property} is  equivariance under rotations and translations in the ambient plane illustrated  in Figure~\ref{fig:eucl.mf}.  When a composition of a translation and a rotation  sends a curve $\Gamma$  to $\tilde \Gamma$ and a  point $p\in \Gamma$ to $\tilde p\in \tilde \Gamma$,  the   Frenet frame at $p$ along $\Gamma$ and the   Frenet frame at $\tilde p$  along $\tilde\Gamma$ are related by the same transformation. Compositions of rotations and translations in the plane comprise a group, called the special Euclidean group\footnote{When reflections are included, the group is called \emph{Euclidean} and is denoted $E_2$.} and denote $SE_2$. The \emph{second property} is equivariance under  orientation preserving  reparameterizations of the curve.
 Assume $\Gamma$ has a smooth parametrization  
${\cc(t)}=\begin{bmatrix}x(t)&y(t)\end{bmatrix}$,  then the Frenet frame along $\Gamma$ has parametrization
\beq\label{eq-TN}T(t)=\begin{bmatrix} \frac{x'(t)}{\sqrt{x'(t)^2+y'(t)^2}}\\ \frac{y'(t)}{\sqrt{x'(t)^2+y'(t)^2}}\end{bmatrix}\text{ and } N(t)=\begin{bmatrix} - \frac{y'(t)}{\sqrt{x'(t)^2+y'(t)^2}} \\\frac{x'(t)}{\sqrt{x'(t)^2+y'(t)^2}}\end{bmatrix}\eeq
Let $\phi\colon\R\to\R $ be a smooth function such that $\phi'(t)>0$ for all $t\in\R$. It is straightforward to check that if we apply  formulas \eqref{eq-TN} to parametrization $\hat{\cc}(t)={\cc}(\phi(t))$ of $\Gamma$ we obtain the Frenet frame with parametrization $T(\phi(t))$ and $N(\phi(t))$.
\begin{figure}[t]
\centering
\begin{minipage}{.40\linewidth}
  \includegraphics[width=1\linewidth]{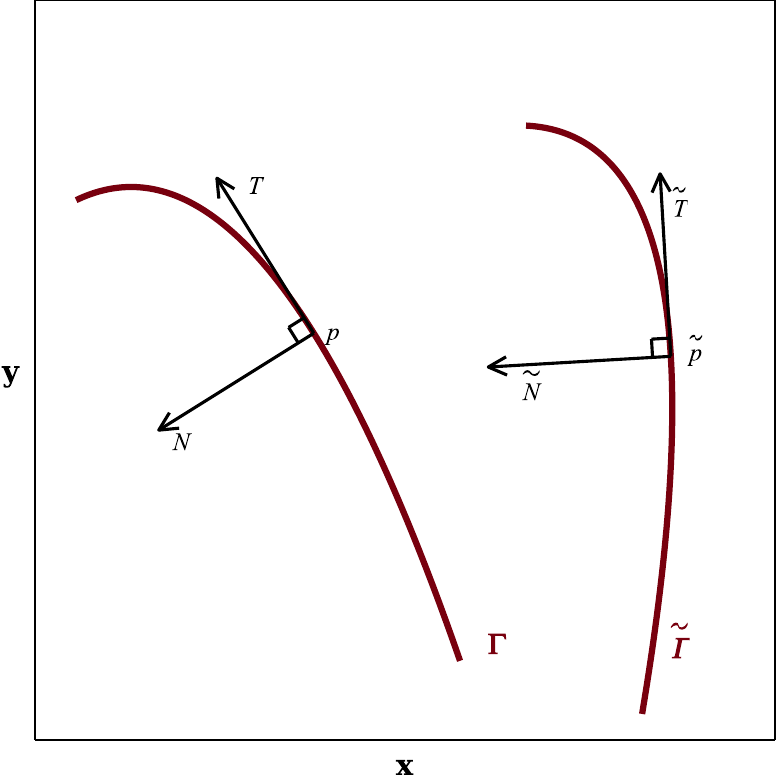}
  \caption{$SE_2$-equivariance of the Frenet frame.}
  \label{fig:eucl.mf}
\end{minipage}
\hspace{.05\linewidth}
\begin{minipage}{.40\linewidth}
  \centering
  \includegraphics[width=1\linewidth]{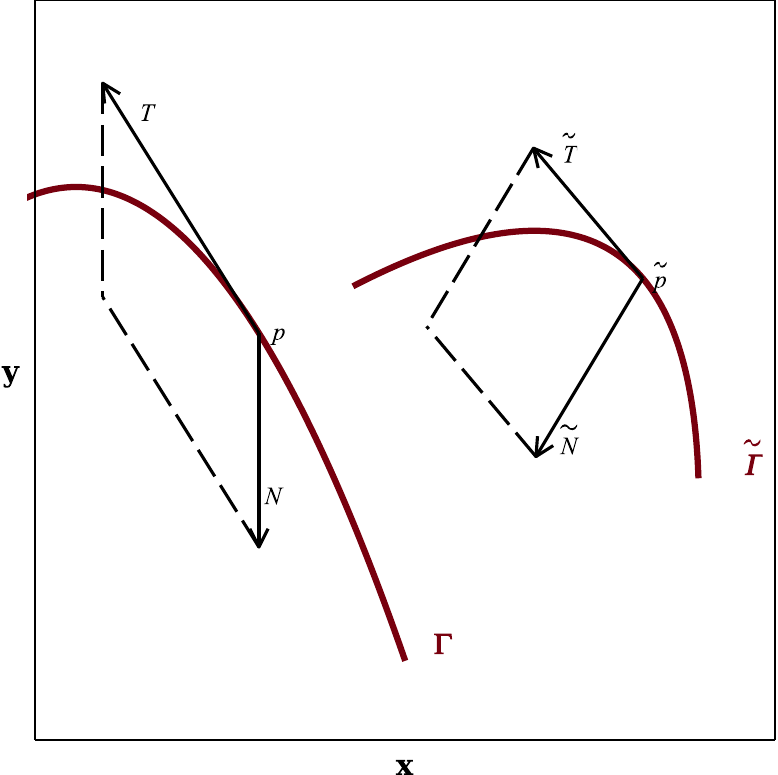}
  \caption{$SA_2$-equivariance of the classical affine frame.}
  \label{fig:aff.mf}
\end{minipage}
\end{figure}

 Frenet's construction generalizes to  curves in $\R^n$, as well as  to higher dimensional submanifolds in  $\R^n$,  with Darboux's frames for surfaces being another classical example.  Frames that are equivariant under the special Euclidean transformations   allow us to  study properties of the geometric shapes independently of their parametrization and position in the space, and to establish their congruence  under rigid motions.  For  the space curves,  in addition to Frenet  frames, other $SE_3$-equivariant frames were considered, in particular,  for applications in computer-aided geometric design and computer graphics \cite{bishop1975, klok1986, gugg1989, farouki2010}.

Following Felix Klein's Erlangen program of 1872 (\cite{klein-germ,
 klein-eng}), where he proposed  to view geometry
 as the study of invariants  under various  group actions on geometric objects, the discovery of frames equivariant under actions of  non-Euclidean groups flourished, culminating  in \'Elie Cartan's general theory of moving frames \cite{C35}. For  its contemporary formulations and generalizations see, for instance, \cite{ivey-landsberg-2016, olver-150}. 
 
 The special affine group $SA_n(\R)$, consisting of volume preserving linear transformations and translations in $\R^n$, is one of the groups of interest with applications including computer vision and shape analysis \cite{faugeras94,COSTH-1998,TOT2020}, human movement and handwriting recognition \cite{GMW-10, FH-2007, MF-2014}. 
 Since the special affine  action preserves neither  the lengths of vectors  nor the angles between them, the $SE_n$-equivariant frames, consisting of unit-length orthogonal vectors,  are not $SA_n$-equivariant.  In Figure~\ref{fig:aff.mf}, we show  the classical  $SA_2$-equivariant frame along a planar curve. It  consists of  two vectors, called the affine tangent and the affine normal, respectfully, such  that the parallelogram defined by these vectors has a unit area. For the explicit formulas  see, for instance,~\cite[p.150]{Gug63}.

 When applied  to polynomial curves, the classical   differential-geometric frames  have  a serious drawback because their expressions are neither polynomial nor even rational. A large body of work has been published to address this issue.  One  possible approach  is to find classes  of curves admitting rational frames. This lead to the  extensive study of the class of polynomial curves $\cc(t)$ with polynomial speed $|\cc(t)'|$, called   Pythagorean hodograph\footnote{For any $n\geq 2$, Pythagorean hodograph condition is necessary   for rationality for Frenet frame, but only for $n=2$ it is sufficient.} curves, and its subclasses \cite{farouki2008, cheng2016,  farouki2012, farouki2017}, mostly for planar and space curves in Euclidean geometry. As it can be seen from these works, the  set  of polynomial curves admitting $SE_n$- and  reparameterization-equivariant  rational frames  is very small.  Another approach is to find appropriate  rational approximations of non-rational  equivariant frames \cite{maurer1999, farouki2003, farouki2022}.

 In this paper, we address the problem of constructing \emph{polynomial} equi-affine (i.e $SA_n$-equivariant)  frames on the set of \emph{generic polynomial curves\footnote{Precise conditions for a curve being generic are given in Definition~\ref{def-regc}.}. However,  in order to remain in  the realm of the \emph{polynomial algebra}, we need to  restrict the allowed set of re-parameterizations to  parameter-shifts.} More precisely,  we are proposing an algorithm that assigns a   \emph{polynomial} moving  frame $\ww_1(t),\dots,\ww_n(t)$  along a generic polynomial curve $\cc(t)$ in  $\K^n$,  where $n>1$ and $\K$ is a field of characteristic zero. The algorithm combines the following important algebraic and geometric properties:
\begin{enumerate}
\item For all $t$,  $\ww_1(t)$ is tangent to the curve at the point $\cc(t)$.
\item For all $t$, the frame has unit volume: $| \begin{matrix}\ww_1(t)&\dots&\ww_n(t)\end{matrix}|=1$
\item \label{mdeg} The frame is of minimal degree per Definition~\ref{def-mdeg} below. 

\item \label{san} If the input is affected by  an $SA_n(\K)$  transformation in the ambient space $\K^n$, then  so does the output.
\item  \label{sa1} If the input is affected by  a parameterization shift, then  so does the output. 
 \end{enumerate}
It is important to note that  properties ~\ref{san}--\ref{sa1} are not the properties of the algorithm's output, but rather the properties  of the algorithm itself. In other words, if we view the algorithm  as the  map that assigns a frame to the input  polynomial curve, properties  ~\ref{san}--\ref{sa1} say that  this map is equivariant under affine transformations  of the ambient space and parameter-shifts, as defined  rigorously in Definition~\ref{def-eamf} and discussed in Remark~\ref{rem-equiv}. Thus, a more rigorous title of the paper would be ``Equi-affine minimal-degree moving frame \emph{maps} for polynomial curves''. Following the traditional differential geometry terminology we omitted the word ``map'' from the title.

 Our \emph{main result},  summarized in Theorem~\ref{thm-mdeamf},  shows how an equi-affine minimal-degree moving frame algorithm can be built on top  of \emph{any} minimal-degree matrix completion algorithm, via an explicit  minimal-degree preserving ``equivariantization'' procedure we propose.  The matrix-completion problem is a well studied problem in computational algebra, often in relation to the  Quillen-Suslin problem\footnote{As discussed in Remark~\ref{rem-qs}, a minimal-degree \QS matrix does not correspond to a \emph{minimal-degree} matrix completion.} (see, for instance, \cite{zhou-2012, zhou-labahn-2014, fitchas-galligo-1990, logar-sturmfels-1992, caniglia-1993,park-woodburn-1995,lombardi-yengui-2005,fabianska-quadrat-2007,omf-2017,hough-2018}).   Our equivariantization process  was inspired by the  general ``invariantization'' process\footnote{As discussed in Remark~\ref{rem-FO}, Fels-Olver's construction belongs to the smooth category and starts with a choice of a local cross-section to the orbits of a Lie group acting on a smooth manifold. The cross-section \emph{implicitly} defines a locally equivariant smooth map from an open subset of the smooth manifold to the Lie group. To adopt the method to our problem, we need to drop not only the smoothness, but even the continuity assumption, replacing a cross-section with a   set (with no additional structure) of canonical representatives of each orbit. We \emph{explicitly} build a global (non-continuous !) equivariant map from the set of regular polynomial vectors to the relevant group.} introduced by Fels and Olver in \cite{FO99}, but its adaptation  to the problem at hand turned out to be nontrivial. One of the major challenges was achieving \emph{simultaneous} equivariantization under affine transformations in the ambient space and under parameter shifts.
 
Our \emph{secondary  result}, summarized in Theorem~\ref{thm-mc},  shows that one can obtain a minimal degree matrix completion of  a given vector
 $\vv$ by  adjoining $\vv$ with a $\mu$-basis of  a minimal-degree \bez vector of $\vv$. Since algorithms  for computing  $\mu$-bases\footnote{Also known as \emph{optimal-degree kernels} and \emph{minimal bases}.}  and \bez vectors are abundant  (see, for instance, \cite{sederberg-chen-1995,cox-sederberg-chen-1998, zheng-sederberg-2001, chen-wang-2002, chen-cox-liu-2005, song-goldman-2009, jia-goldman-2009,tesemma-wang-2014, hhk2017, jia-shi-chen-2018, beelen1987,antoniou2005,zhou-2012,EGB}), Theorem~\ref{thm-mc}  leads to a novel, easy-to-implement algorithm for computing a minimal degree matrix completion, supplementing the existing body of literature on this topic.

The structure of the paper unwraps  our construction  starting with  the outermost layer.
In Section~\ref{sect-prob}, we rigorously state the problem after introducing all necessary definitions. We start with a definition of the degree of a polynomial vector  or  a polynomial matrix. We proceed with defining the set $\rc$ of generic polynomial curves, which serves as a maximal possible domain for an equi-affine minimal-degree moving frame map. We recall the standard definitions  of a  group action and  related notions,  including invariance and equivariance.   This allows us to give a definition of equi-affine minimal-degree moving frame map and rigorously state the problem.

In Section~\ref{sect-mf}, we show how an equivariant moving frame algorithm can be built on top of any matrix-completion algorithm. This is done in two stages: In Section~\ref{subs-eamf}, we  show that an equivariant moving frame algorithm can be built on top of any equivariant matrix-completion algorithm. In Section~\ref{subs-eqmc}, we show that an equivariant matrix-completion algorithm  can be built on top of any matrix-completion algorithm.  To do so, we develop a general ``equivariantization'' process  (described in Section~\ref{subs-equiv}),
which was  inspired by the  invariantization process introduced in the fundamental work \cite{FO99}.   Application of this process to the problem at hand  requires, however, some ingenuity, as reflected in Theorem~\ref{thm-rho12}.
  In this section, we are not yet concerned with the degree-minimality.  

Section~\ref{sect-mmc} is devoted to the matrix-completion problem. In Section~\ref{subs-mmc}, we show that by adjoining $\vv\in\pv$, such that $\gcd(\vv)=1$, with  a $\mu$-basis of a \bez vector $\bb$ of $\vv$, one produces a matrix completion of $\vv$ of degree equal to $\deg\vv+\deg\bb$. If $\bb$ is of minimal degree then so is the matrix completion.    
There is a variety of existing algorithms and methods to compute a minimal-degree \bez vector and a $\mu$-basis. 
After showing, in Section~\ref{subs-mind},  that the equivariantization process presented in Section~\ref{subs-eqmc}    preserves the minimality of the degree, we have all the needed ingredients to build an equi-affine minimal-degree moving frame algorithm.

In Section~\ref{sect-mdeamf},  we pull together all the preceding results to formulate the main theorem of our paper, Theorem~\ref{thm-mdeamf}, and the  corresponding equi-affine minimal-degree moving frame algorithm.
The Maple implementation of the algorithm can be found at 
\begin{center}
\url{https://github.com/Equivariant-Moving-Frame/EAMFM}.
\end{center}
 
In Appendix, we briefly describe an approach for computing a minimal-degree \bez vector and a $\mu$-basis, which we used in the above Maple implementation.  This approach,   first developed in \cite{hhk2017, omf-2017, hough-2018},  reduces both problems to the row-echelon reduction of a matrix over the field $\K$.

\section{Problem statement and definitions}\label{sect-prob}
Throughout the paper,  $\K$ denotes a field of characteristic zero  and $\KK$ its algebraic closure, while $\K[t]$ denotes the ring of polynomials over a field $\K$, $\pv$ denotes the set of polynomial vectors of length $n$ and $\K[t]^{n\times l}$ denotes the set of $n\times l$ polynomial matrices.
In examples, its is natural to assume $\K=\mathbb{Q}$ for computations and  $\K=\mathbb{R}$ for visualization. From now on, we will use  {\bf bold  faced} font to denote polynomials, as well as polynomial  vectors and matrices.

Our goal is to design an algorithm whose input is a polynomial curve in an  $n$-dimensional affine space   and whose output is an $n$-tuple of linearly independent polynomial vectors, such that the first vector is tangent to the input  curve, and  the  parallelepiped spanned by all $n$ vectors has constant volume equal to one. The output can be visualized  as \emph{a frame} of basis vectors  in  $n$-dimensional affine space \emph{moving} along the curve. In addition, we impose  two desirable geometric and algebraic properties on our algorithm:  degree minimality of the output and  equivariance of the algorithm under  affine transformations of the ambient space and  parameterization-shifts. A rigorous statement of the problem is given at the end of this section and is based on the following definitions.  
\begin{definition}[degree]\label{def-mdeg}  
The \emph{degree} of a polynomial  vector $\vv=\begin{bmatrix}\vv_1\\ \vdots\\ \vv_n\end{bmatrix} \in \K[t]^n$ is defined by
\[\deg\vv=\max_{i=1}^{n}\ \deg \vv_{i}.\]
The \emph{degree} of a polynomial matrix ${\pW}\in \K\left[t\right]^{n\times\ell}$ 
is defined by 
\[
\deg {\pW}=\sum_{j=1}^{\ell} \deg \pW_{*j} \;\;\;\;\;(\text{where $\pW_{*j}$ denotes the $j$-th column vector of $\pW$})
\]
The set of polynomial vectors of length $n$ and degree $d$ is denoted by $\pv_d$. 
\end{definition}

\begin{definition}[generic polynomial curves]\label{def-regc}  A column vector $\cc\in \K\left[  t\right]  ^{n}$ is called a
\emph{generic polynomial curve} if  
\begin{enumerate}
\item For every $t\in \overline{\K}\ \ $we have $\cc^{\prime}\left(  t\right)  \neq 0$.
\item The curve does not lie in a proper affine subspace of $\K^{n}$.
\item $\deg \cc>n$.
\end{enumerate}
The set of all generic polynomial curves is denoted by $\mathcal{C}$.
\end{definition}

\begin{remark}\rm  
The three conditions in the definition are motivated by the following considerations.
\begin{enumerate}
\item The requirement that the tangent vector is non-zero is the standard genericity condition in differential geometry.  If $\K$ is not algebraically closed, our requirement is stronger  and  turns  out to be necessary and sufficient for a moving frame  to exist along the curve.

\item If a curve lies in a proper affine subspace of $\K^{n}$, then after an appropriate affine transformation it can be represented by a polynomial vector  in a lower dimensional linear space. The first and the second conditions combined are necessary and sufficient for a  curve to be included in the domain a moving frame algorithm, which is equivariant under the ambient space affine transformations, however, it is not sufficient for a curve to be included in the domain of a  parameter-shift and ambient space equivariant moving  frame algorithm (see Definition~\ref{def-eamf})

\item For the second condition to hold it is necessary that  $\deg\cc\geq n$. The sharp inequality  in the third condition, in combination with the above two conditions,  turns out to be   necessary and sufficient   for a curve to be included in the domain of a fully equivariant moving frame algorithm.
\end{enumerate}
When $d>n$, the set of generic curves of degree $d$  is Zariski open and dense in  $\pv_d$.
\qq
\end{remark}

 \begin{example}\rm The  polynomial vector
 $\cc=\left[
\begin{array}
[c]{c}%
t\\
t^{2}\\
t^{4}+1
\end{array}
\right]$ defines  a generic polynomial curve of degree~4 in $\Q^3$ because:
\begin{enumerate}
\item For every $t\in \overline{\Q}=\mathbb{C}\ \ $we have $\cc^{\prime}\left(  t\right)=\left[
\begin{array}
[c]{c}%
1\\
2t\\
3t^{3}%
\end{array}
\right]   \neq0$.

\item  The curve
$\cc=\left[
\begin{array}
[c]{c}%
0\\
0\\
1
\end{array}
\right]  +\left[
\begin{array}
[c]{c}%
t\\
t^{2}\\
t^{4}%
\end{array}
\right]  =\left[
\begin{array}
[c]{c}%
0\\
0\\
1
\end{array}
\right]  +\underset{Q}{\underbrace{\left[
\begin{array}
[c]{cccc}%
 1 & 0 & 0 & 0\\
 0 & 1 & 0 & 0\\
 0 & 0 & 0 & 1
\end{array}
\right]  }}\left[
\begin{array}
[c]{c}%
t\\
t^{2}\\
t^{3}\\
t^{4}%
\end{array}
\right]  $
does not lie in a proper affine subspace of $\K^3$ because $\rank Q=3=n$. 
\item  $\deg \cc=4>n=3$.
\end{enumerate}
\qq
\end{example}

\begin{definition}[moving frame]\label{def-mf}
We say $\pF\in \K\left[  t\right]  ^{n\times
n}$ is a \emph{moving frame} of $\cc\in  \mathcal{C}$ if

\begin{enumerate}
\item $\pF=\left[
\begin{array}
[c]{cc}%
\cc^{\prime} & \cdots
\end{array}
\right]$
\item $\left\vert \pF\right\vert =1$.
\end{enumerate}
A moving frame $\pF$  of $\cc$ is said to be of \emph{minimal degree} if for any moving frame $\pG$ of $\cc$
$$\deg \pF\leq\deg \pG.$$
A map $\mfm\colon\mathcal{C}\longrightarrow\K\left[  t\right]
^{n\times n}\ $ is called a \emph{moving frame map} if $\mfm\left(
\cc\right)  $ is a moving frame of $\cc$ for every $\cc\in\mathcal{C}$.  The map $\mfm$ is of \emph{minimal degree} if $\mfm\left(
\cc\right)  $ is a minimal degree moving frame of $\cc$ for every $\cc\in\mathcal{C}$.
\end{definition}

The first condition in Definition~\ref{def-mf} requires the first vector in the frame to be the tangent  vector of the curve. The second condition  guarantees that for all $t\in\K$, the columns of $\pF(t)$ comprise a set of $n$ linearly independent  vectors in $\K^n$ and so can be visualized as a basis (or a frame) moving along the curve. Moreover, the parallelepiped defined by this frame has a constant volume 1.  From the algebraic perspective, one can say that a moving frame of $\cc$ is a \emph{matrix completion} of its velocity vector $\cc'$ (see Definition~\ref{def-mc} in Section~\ref{subs-eamf}).
 \begin{example}\rm Consider 
 $\cc=\left[
\begin{array}
[c]{c}%
t\\
t^{2}\\
t^{4}+1
\end{array}
\right] \in \mathcal{C}$. Then it is straightforward to verify that 
matrices
\[
\pF=\left[
\begin{array}
[c]{ccc}%
1 & 0 & 0\\
2t & 2 & 0\\
4t^{3} & 0 & \frac{1}{2}
\end{array}
\right] \text{ and }   
\pG=\left[
\begin{array}
[c]{ccc}%
1 & 0 & 0\\
2t & 2 & 0\\
4t^{3} & t^{2023} & \frac{1}{2}%
\end{array}
\right]
\]
are  moving frames of $\cc$.
Note that%
\begin{align*}
\deg \pF  &  =\max\left[
\begin{array}
[c]{c}%
0\\
1\\
3
\end{array}
\right]  +\max\left[
\begin{array}
[c]{c}%
-\infty\\
0\\
0
\end{array}
\right]  +\max\left[
\begin{array}
[c]{c}%
-\infty\\
-\infty\\
0
\end{array}
\right]  =3+0+0=3,\\
\deg \pG  &  =\max\left[
\begin{array}
[c]{c}%
0\\
1\\
3
\end{array}
\right]  +\max\left[
\begin{array}
[c]{c}%
-\infty\\
0\\
2023
\end{array}
\right]  +\max\left[
\begin{array}
[c]{c}%
-\infty\\
-\infty\\
0
\end{array}
\right]  =3+2023+0=2026.
\end{align*}
Therefore $\pG$ is not of minimal degree. On the other hand, it is clear that the degree of a moving frame of $\cc$ is at least  $\deg \cc'=3$, and so $\pF$ is of minimal degree.

Any moving frame algorithm, including the one  introduced in Section~\ref{sect-mdeamf}, is an example of a moving frame map. 
\qq
\end{example}
Before giving the definition of an equi-affine moving frame map, we review the standard definitions of a group action, as well as definitions   of invariant and  equivariant maps. 


\begin{definition}[action]\label{def-act}
An  \emph{action}  of a group $\gva$  on a set $\zva$ is a map $\act\colon  \gva \times \zva \to \zva $  satisfying the following two properties:
\begin{enumerate}
    \item[i.] \label{ai} \emph{Associativity:}  $ \act(\g_1 \star  \g_2, \pz) = \act(\g_1, \act (\g_2, \pz ))$, $\forall \g_1, \g_2\in \gva$  and  $\forall \pz\in \zva$.
\item[ii.]\label{aii} \emph{Action of the identity element:}     $\act(e,\pz) = \pz$, $\forall  \pz \in \zva$.  
\end{enumerate}
We say that an action $\act$ is \emph{free} if  for all $\pz\in\zva$ and $\g\in \gva$:
$$\act( \g, \pz )=\pz\qquad\Longleftrightarrow\qquad \g=e.$$
\end{definition}

\begin{remark} \label{rem-act}  \rm\
        
\begin{enumerate}\item In the above definition, $\star$ denotes the  group operation. The abbreviation  $\act(\g,\pz)=\g\cdot \pz$ will be used  when the map $\alpha$ is clear from the context. 
\item To avoid cluttering of parentheses, we \emph{will sometimes use
$g_1 \cdot g_2 \cdot \pz$ to denote either of the two  quantities:  $g_1 \cdot(g_2 \cdot \pz)$ and $(g_1 \star g_2) \cdot \pz$, whose equality is due to the associativity of the action.}
\item If two groups  $\gva_1$ and $\gva_2$ act on $\zva$ and  their actions commute, then there is a well defined action of the direct product $\gva_1 \times \gva_2$, where for $(\g_1,\g_2)\in \gva_1 \times \gva_2$, $\pz\in \zva$:
$$(\g_1,\g_2)\cdot\pz=\g_1\cdot (\g_2\cdot\pz)=\g_2\cdot (\g_1\cdot\pz).$$
Conversely, an action of a direct product $\gva_1 \times \gva_2$ on $\zva$, defines commuting actions of $\gva_1$ and $\gva_2$ on $\zva$  by
$$ g_1\cdot \pz=(g_1,e)\cdot\pz\text{ and  } g_2\cdot \pz=(e, g_2)\cdot\pz$$
\item For a fixed $\g\in \gva$ a group action $\act\colon  \gva \times \zva \to \zva $ induces a \emph{bijection} $\act_\g\colon \zva\to \zva$:
$$\act_\g(\pz)=\act(\g,\pz),$$
whose inverse is   $\act_{\g^{-1}}$.
\qq
\end{enumerate}
\end{remark}
\begin{example} \rm  The following actions play an important role in this paper.

\begin{enumerate}
\item The group operation induces an action  $\act\colon  \gva \times \gva \to \gva $ of the group on itself:
\beq\label{self-act}\g_1\cdot \g_2=\g_1\star \g_2.\eeq
In this paper, we will see 
\begin{enumerate}
\item An action of the field  $\K$ on itself    with  the group operation being the addition.
\item  An action of the special\footnote{In the context of matrix groups ``special'' means of determinant one.} linear group $$SL_n(\K)=\{L\in\K^{n\times n}|\,|L|=1\}$$ on itself with the group operation being the multiplication of matrices. 
\end{enumerate}
\item An action of   $SL_n(\K)$ on the set $\K[t]^{n\times l}$ of  $n\times l$ polynomial matrices (including the set of polynomial vectors $\K[t]^{n}$ when $l=1$) by   matrix multiplication:  
\beq\label{L-act} \l\cdot \pW=\l \pW\text{ for } L\in  SL_n(\K) \text{ and }\pW\in \K[t]^{n\times l}.\eeq

\item An action of  $\K$, with the group operation being the  addition, on the set $\K[t]^{n\times l}$ by the parameter shift:
\beq\label{s-act} s\cdot \pW(t)=\pW(t+s)\text{ for } s \in \K \text{ and }\pW\in \K[t]^{n\times l}.\eeq

\item Since actions \eq{L-act} and \eq{s-act} on $\K[t]^{n\times l}$ commute, in combination, they define  an action of the direct product $\slnk\times\K $ on  $\K[t]^{n\times l}$.
\beq\label{Ls-act} (L,s)\cdot \pW(t)=L\pW(t+s)\text{ for }  L\in  SL_n(\K) ,s \in \K \text{ and }\pW\in \K[t]^{n\times l}.\eeq
\item An action of the group  $\K^n$ of constant vectors, with the group operation being the addition of vectors, on $\K[t]^{n}$ by translation:
\beq\label{a-act} a\cdot \vv=\vv+a\text{ for } a \in \K^n \text{ and }\vv\in \K[t]^{n}.\eeq
\item Actions  \eq{L-act} and \eq{a-act} on  $\K[t]^{n}$ do not commute, but, in combination, they define an action of the semi-direct product $SL_n(\K)\ltimes \K^n$  on $\K[t]^{n}$. This semi-direct product is called \emph{the special affine group}t and is denoted $SA_n(\K)$.
\item In combinations actions \eq{L-act}, \eq{s-act} and \eq{a-act} define an action of $ SA_n\times\K$ on $\pv$ by
\beq \label{all-act}(L,a, s)\cdot \vv(t)=L\vv(t+s)+a \text{ for }  L\in SL_n(\K), a \in \K^n, s\in\K \text{ and }\vv\in \K[t]^{n}.\eeq
\end{enumerate}
\qq
\end{example}
We note the following simple but necessary result. 
\begin{proposition} Action \eq{all-act} on $\pv$ restricts to $\rc$:
$$ (L,a, s)\cdot \cc\in \rc \text{ for  any }  (L,a,s)\in SA_n\times\K \text{ and }\cc\in \rc.$$
\end{proposition}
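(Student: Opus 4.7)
The plan is simply to verify, one by one, the three defining conditions of $\rc$ for the transformed curve $\tilde\cc(t)=L\cc(t+s)+a$. Since each of the three conditions is either ``preserved under linear reparametrization and invertible linear transformation'' or ``preserved under translation'' for elementary reasons, the argument should be short. The only mild subtlety is making sure these elementary preservation facts interact correctly with all three factors $L$, $a$, $s$ simultaneously.

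For condition 1, I would differentiate: $\tilde\cc'(t)=L\,\cc'(t+s)$. Because $|L|=1$, $L$ is invertible, so $L\,\cc'(t+s)$ vanishes at some $t\in\KK$ only when $\cc'(t+s)$ does, which by hypothesis on $\cc$ never happens. For condition 3, I would argue via leading coefficients. Writing $\cc(t)=\sum_{i=0}^d v_i t^i$ with $d=\deg\cc>n$ and $v_d\neq 0$, the parameter shift leaves the leading coefficient $v_d$ unchanged and preserves degree; applying $L$ replaces it with $Lv_d\neq 0$ (again by invertibility of $L$); and adding the constant $a$ cannot affect the coefficient of $t^d$ since $d>n\geq 1$. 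Thus $\deg\tilde\cc=d>n$.

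Condition 2 is the one that is not purely formal, since it is a statement about the image of the curve rather than its parametrization. The key observation is that the map $\Phi\colon \K^n\to\K^n,\;x\mapsto Lx+a$ is an affine bijection (with inverse $y\mapsto L^{-1}(y-a)$), and the parameter shift $t\mapsto t+s$ does not alter the image of the curve in $\K^n$. Therefore $\operatorname{Image}(\tilde\cc)=\Phi(\operatorname{Image}(\cc))$. If $\tilde\cc$ were contained in a proper affine subspace $W\subsetneq \K^n$, then $\cc$ would be contained in $\Phi^{-1}(W)$, which is again a proper affine subspace, contradicting condition 2 for $\cc$. The main (and very minor) obstacle is keeping this affine-subspace pullback argument clean; everything else reduces to one-line manipulations using $|L|=1$.
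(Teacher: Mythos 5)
Your proof is correct: the paper states this proposition without any proof (calling it a ``simple but necessary result''), and your condition-by-condition verification --- invertibility of $L$ for the nonvanishing of $\tilde\cc'$, preservation of the leading coefficient vector for $\deg\tilde\cc>n$, and the pullback of a proper affine subspace under the affine bijection $x\mapsto Lx+a$ for condition 2 --- is exactly the intended elementary argument. No gaps.
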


\begin{definition}[orbit]\label{def-orb}  For an action $\act\colon  \gva \times \zva \to \zva $, the \emph{orbit}  of an element $\pz\in\zva$ is the set 
\beq\label{eq-orb} \orb_\pz=\left\{\act(\g,\pz)\,|\,\g\in\gva\right\}. \eeq
\end{definition} 
If the action is free then for each $\pz\in \zva$ the map $\act_\pz\colon \gva\to\orb_\pz$, defined by $\act_\pz(\g)=\act(\g,\pz)$, is a bijection.
\begin{figure}[h]
  \centering
  \begin{subfigure}[b]{0.45\textwidth}
  \centering
  \begin{tikzcd}[column sep=normal]
    \zva \arrow[swap]{rd}{f} \arrow{rr}{\act_\g}&          & \zva \arrow{ld}{f}\\
                                                &\mathcal Y&
  \end{tikzcd}
  \caption{Invariance.}
  \label{fig:inv}
  \end{subfigure}
  \hfill
  \begin{subfigure}[b]{0.45\textwidth}
  \centering
  \begin{tikzcd}[column sep=large]
    \zva       \arrow{r}{\act_\g  }   \arrow[swap]{d}{f}      & \zva  \arrow{d}{f}\\
    \mathcal Y \arrow{r}{\beta_\g }                           & \mathcal Y 
  \end{tikzcd}
  \caption{Equivariance.}
  \label{fig:eqv}
  \end{subfigure}
\caption{Commutative diagrams.}
\end{figure}
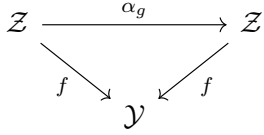
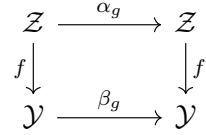
\begin{definition}[invariance]\label{def-inv} A map $f\colon\zva\to\yva$ is called \emph{invariant} under the action~$\act\colon  \gva \times \zva \to \zva $ if 
\beq\label{invf}f\left(\alpha (\g,\pz)\right)=f(\pz), \qquad \forall (\g,\pz)\in \gva\times\zva. \eeq
\end{definition}
Equivalently, we can say that the diagram in Figure~\ref{fig:inv} commutes for all $\g\in\gva$ or that $f$ is constant along each orbit.
\begin{definition}[equivariance]\label{def-eqv} A map $f\colon\zva\to\yva$ is called \emph{equivariant} under the actions $\act\colon  \gva \times \zva \to \zva $ and $\beta\colon  \gva \times \yva \to \yva $ if 
\beq\label{eqvf}f\left(\alpha (\g,\pz)\right)=\beta(\g,f(\pz)), \qquad \forall (\g,\pz)\in \gva\times\zva. \eeq
\end{definition}
Equivalently, we can say that the diagram in Figure~\ref{fig:eqv} commutes for all $\g\in\gva$ or that $f$ maps an orbit of action  $\act$ to an orbit of action $\beta$.

\noindent Now we have all the necessary notions and notations to give the central definition of our paper.
\begin{definition}[equi-affine moving frame map]\label{def-eamf} A moving frame map $\mfm\colon {\mathcal C} \to \K[t]^{n\times n}$ is called \emph{equi-affine} if

\begin{enumerate}
\item $\underset{\cc\in\mathcal{C}}{\forall}\ \underset{\l\in SL_{n}\left(
\K\right)  ,\ a\in \K^{n}}{\forall}\mathcal{F}\left(  a\cdot (\l\cdot
\cc)\right)  =\l\cdot\mathcal{F}\left(  \cc\right)  $\hfill (ambient equivariance),

\item $\underset{\cc\in\mathcal{C}}{\forall}\ \underset{s\in \K}{\forall
}\ \ \ \ \ \ \mathcal{F}\left(  s\cdot \cc\right)  =s\cdot\mathcal{F}\left(
\cc\right)  $\hfill (parameter-shift equivariance),
\end{enumerate}
where $L \cdot $, $a \cdot $, and $s\cdot $ are defined by \eqref{L-act}, \eqref{a-act} and  \eqref{s-act}, respectively.
\end{definition}
\begin{remark}\label{rem-equiv}\rm\hfill
\begin{enumerate}
\item The first condition requires $\mfm$ to be equivariant under the $SL_n(\K)$-action on $\mathcal{C}$ and on~$\K[t]^{n\times n}$ given by \eqref{L-act}, and to be invariant under the action \eqref{a-act} on  $\mathcal{C}$. Compositions of transformations \eqref{L-act} and \eqref{a-act}  is an action of the special affine group~$SA_n(\K)=SL_n(\K)\ltimes \K^n$ on $\mathcal{C}$. If we assume that $\K^n$ acts trivially on  $\K[t]^{n\times n}$, i.e.~$a\cdot \pW=\pW$ for all~$a\in \K^n$ and~$\pW\in \K[t]^{n\times n}$, then we can say that $\mfm$  is $SA_n$-equivariant. This justifies the term \emph{equi-affine moving frame map}.\footnote{From another perspective, this terminology can be justified by the fact that  the groups $SA_n(\K)$ is often called the \emph{equi-affine group} rather than \emph{the special affine group}.} From the geometric point of view the first condition tells us that $\mfm$ respects the affine transformations of the ambient space $\K^n$ and, therefore, we call this condition \emph{ambient equivariance}.

\item The second condition requires $\mfm$ to be equivariant under the $\K$-actions on $\mathcal{C}$ and on $\K[t]^{n\times n}$ given by \eqref{s-act}. From the geometric point of view, this condition tells us that $\mfm$ respects  parametrization-shifts and, therefore, we call this condition \emph{parameter-shift equivariance}

\item  One can also note that  the $\K$-action on the parameter set, given by $s\cdot t=t+s$,  can be viewed as  $SA_1$-action on $\K^1$,  and thus the two conditions in Definition~\ref{def-eamf}  require $\mfm$ to behave naturally with respect to the special affine actions on the ambient space and on the parameter space. In other words, an equi-affine moving frame map is  $SA_1\times SA_n$-equivariant.
\end{enumerate}
\qq
\end{remark}
We now give a precise formulation of the problem addressed in this paper. 
\vskip2mm
\noindent {\bf Problem:} Design an algorithm $\mfm$  whose input  is a generic polynomial curve $\cc\in \mathcal C$ and whose output   is a  minimal degree moving frame of $\cc$ per Definition~\ref{def-mf}, such that the map $\mfm\colon {\mathcal C} \to \K[t]^{n\times n}$ satisfies  equivariance conditions of     Definition~\ref{def-eamf}.  

\section{Equi-affine  moving frames (\rm{\bf EAMF})}\label{sect-mf}
The goal of this section is to show how  any  matrix-completion algorithm can be turned  into an equi-affine moving frame algorithm. 
 In Section~\ref{subs-eamf}, we prove  that an equivariant moving frame algorithm can be built on top of any equivariant matrix-completion algorithm.  In Section~\ref{subs-equiv}, we describe a general equivariantization process, which, in principle, can be applied to a variety of problems. In Section~\ref{subs-eqmc},  we specialize  the  equivariantization process to the problem at hand to show how an equivariant matrix-completion algorithm  can be built on top of any matrix-completion algorithm.

\subsection{\rm{\bf{EAMF}} from an equivariant matrix-completion (\rm{\bf{EMC}}) }\label{subs-eamf}

Given a vector $\vv(t)=\left[\begin{array}
[c]{c}%
\vv_1(t)\\
\vdots \\
\vv_n(t)
\end{array}
\right]\in  \K[t]^n$, a matrix-completion problem consists of finding a polynomial matrix $\pM\in\K[t]^{n\times n}$, whose first column is $\vv$ and whose determinant equals one. From Definition~\ref{def-mf},  we see that a moving frame of $\cc$ is a matrix-completion of $\vv=\cc'$. The following lemma describes the subset of polynomial vectors that can appear as tangents to generic polynomial curves:

\begin{lemma}[tangents to generic curves]\label{lem-tv}
Assume $\vv=\cc'$ for some generic, per  Definition~\ref{def-regc}, polynomial curve $\cc\in \pv$. Then 
\begin{enumerate}
\item $\gcd(\vv)=1$, where the $\gcd$ of a polynomial vector is defined as \emph{the greatest common divisor of its components  normalized so that the leading coefficient is 1}.
\item The components of $\vv$ are linearly independent over $\K$.
\item $\deg \vv\geq n$.
\end{enumerate}
\end{lemma}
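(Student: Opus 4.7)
The plan is to verify each of the three claims directly from the three defining conditions of Definition~\ref{def-regc}, in the same order, since each assertion corresponds naturally to one of the three genericity hypotheses.

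For the first claim, I would argue by contrapositive on roots of the gcd. Let $d(t)=\gcd(\vv)\in\K[t]$ (monic by definition). If $d$ were nonconstant, then by passing to $\overline{\K}$ it would have a root $t_0\in\overline{\K}$, and since $d\mid \vv_i=\cc_i'$ for each $i$, we would get $\cc'(t_0)=0$, contradicting condition~1 of Definition~\ref{def-regc}. Hence $d$ is a nonzero constant and, after the prescribed monic normalization, $d=1$.

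For the second claim, I would argue by contrapositive against condition~2 of Definition~\ref{def-regc}. Suppose $\vv_1,\dots,\vv_n$ are linearly dependent over $\K$: there exist $a_1,\dots,a_n\in\K$ not all zero such that $\sum_i a_i\vv_i = \sum_i a_i \cc_i' = 0$. Since $\mathrm{char}\,\K=0$, antidifferentiation termwise gives a constant $b\in\K$ with $\sum_i a_i\cc_i(t)\equiv b$, so the image of $\cc$ lies in the affine hyperplane $\{x\in\K^n:\sum_i a_i x_i=b\}$, a proper affine subspace. This contradicts condition~2, so the components of $\vv$ are $\K$-linearly independent.

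For the third claim, set $d=\deg\cc$. Since $\mathrm{char}\,\K=0$, any univariate polynomial of degree $d\ge 1$ has derivative of degree exactly $d-1$; applying this componentwise, and using that the maximum degree among components drops by exactly one, yields $\deg\vv=\deg\cc'=d-1$. Condition~3 of Definition~\ref{def-regc} gives $d>n$, i.e.\ $d\ge n+1$, whence $\deg\vv\ge n$.

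I do not anticipate a serious obstacle here: the lemma essentially restates the three genericity conditions one level down (after differentiation). The only small care-point is the degree computation in part~3, which relies on $\mathrm{char}\,\K=0$ to ensure the leading coefficient of a top-degree component does not vanish upon differentiation; this is precisely the standing assumption on $\K$ recalled at the start of Section~\ref{sect-prob}.
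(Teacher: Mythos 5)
Your proof is correct and follows essentially the same route as the paper: part~1 via roots of the gcd in $\overline{\K}$, part~3 via $\deg\cc'=\deg\cc-1$ in characteristic zero, and part~2 by the observation that a $\K$-linear relation among the components of $\cc'$ integrates to an affine relation confining the curve to a proper affine subspace. The only cosmetic difference is in part~2, where the paper phrases the same fact as an equivalence with $\rank Q=n$ for the coefficient matrix $Q$ of the nonconstant part of $\cc$ (an equivalence it reuses later for surjectivity in Proposition~\ref{prop-diff}), whereas you prove directly the one implication the lemma needs.
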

\begin{proof}
\begin{enumerate}
\item The first condition in Definition~\ref{def-regc} requires  that  $\vv(t)  \neq 0$  for every $t\in \overline{\K}$, which is equivalent to  $\gcd(\vv)=1$.
\item Let $\deg\cc=d$, then we can write: 
$$\cc=C \left[\begin{array}
[c]{c}%
1\\
t\\
\vdots \\
t^d
\end{array}\right]=C_{*0}+Q \left[\begin{array}
[c]{c}%

t\\
\vdots \\
t^d
\end{array}\right]
$$
where $C$ is $n\times (d+1)$ matrix of the coefficients of $\cc$, $C_{*0}$ is its first column and $Q$ is an   $n\times d$ matrix comprised of  the  remaining  $d$ columns  of $C$. The curve $$\cc_0=Q \left[\begin{array}
[c]{c}%
t\\
\vdots \\
t^d
\end{array}\right]$$ is a translation of $\cc$, such that $\cc(0)$ is at the origin. Therefore, the second condition in Definition~\ref{def-regc} is equivalent to $\rank Q=n$.

On the other hand,
$$\vv=\cc'=Q \left[\begin{array}
[c]{c}%
1\\
\vdots \\
dt^{d-1}
\end{array}\right]$$ 
 and  so the linear independence of components  of $\vv$ is also equivalent to  $\rank Q=n$.  

\item  The third condition in Definition~\ref{def-regc} is equivalent to  $\deg \vv=\deg \cc-1\geq n$.
\end{enumerate}
\end{proof}

\begin{definition}[regular polynomial vectors] \label{def-regpv}  A polynomial  vector $\vv\in\pv$ is called \emph{regular} if it  satisfies the three conditions of Lemma~\ref{lem-tv}. The set  of such vectors is denoted $\rv$.
\end{definition}
Using Lemma~\ref{lem-tv} and its proof, one can easily derive the following proposition.
\begin{proposition} \label{prop-diff}The differentiation $\cc\to\cc'$ is a well defined  surjective map $\rc\to \rv$ from the space of generic curves to the space of regular polynomial vectors. This map is invariant under the action of $\K^n$ by  translations defined by \eq{a-act}.\end{proposition}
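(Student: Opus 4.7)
The plan is to prove the three assertions — well-definedness, invariance, and surjectivity — essentially independently, leveraging the work already done in Lemma~\ref{lem-tv}.

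First I would dispatch well-definedness. Given $\cc \in \rc$, Lemma~\ref{lem-tv} directly verifies that $\vv = \cc'$ satisfies the three defining conditions of $\rv$ from Definition~\ref{def-regpv}. So differentiation restricts to a map $\rc \to \rv$ with no extra work required.

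Next, invariance under the translation action \eqref{a-act} is a one-line calculation: for any $a \in \K^n$ and $\cc \in \rc$, $(a \cdot \cc)' = (\cc + a)' = \cc'$, since $a$ is a constant vector. So differentiation factors through the orbits of the translation action.

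The main content is surjectivity, and here I would run the argument of Lemma~\ref{lem-tv} in reverse. Given $\vv \in \rv$, let $\cc(t)$ be any antiderivative of $\vv$, e.g. the componentwise formal antiderivative with zero constant term. I need to verify the three conditions of Definition~\ref{def-regc} for this $\cc$: (i) $\cc'(t) = \vv(t) \neq 0$ for all $t \in \KK$, which is exactly the condition $\gcd(\vv) = 1$; (iii) $\deg \cc = \deg \vv + 1 \geq n + 1 > n$, using $\deg \vv \geq n$. Condition (ii) is the point where I would reuse the matrix argument from the proof of Lemma~\ref{lem-tv}: writing $\cc = C_{*0} + Q [t, \ldots, t^d]^T$, the proof shows that $\cc$ avoiding a proper affine subspace of $\K^n$ is equivalent to $\rank Q = n$, which in turn is equivalent to the components of $\vv = \cc'$ being linearly independent over $\K$ — and this linear independence is exactly one of our hypotheses on $\vv$. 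Hence $\cc \in \rc$ and maps to $\vv$ under differentiation.

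I do not expect any serious obstacle here; the only subtlety is recognizing that the forward implications in Lemma~\ref{lem-tv} are in fact equivalences (evident from the proof), so the same computations yield the reverse construction for surjectivity. The proof is essentially a repackaging of Lemma~\ref{lem-tv} together with the trivial observation that differentiation annihilates constants.
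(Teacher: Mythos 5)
Your proposal is correct and matches what the paper intends: the paper gives no explicit proof, remarking only that the proposition can be "easily derived" from Lemma~\ref{lem-tv} and its proof, and your argument carries out exactly that derivation — using the fact that the implications in Lemma~\ref{lem-tv} are established as equivalences (non-vanishing of $\cc'$ over $\KK$ iff $\gcd=1$; condition 2 iff $\rank Q=n$ iff linear independence of the components of $\cc'$) to get surjectivity via an antiderivative, plus the trivial observation that differentiation annihilates the constant $a$.
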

\begin{definition}[matrix-completion]\label{def-mc} For $\vv\in\pv$, we say that $\pM\in\pmat$ is a \emph{matrix-completion} of   $\vv$ if 
\begin{enumerate}
\item $\pM _{*1}=\vv$, where $\pM  _{*1}$ stands for the first
column of the matrix $\pM $,
\item $|\pM|=1$.
\end{enumerate}
A matrix-completion $\pM$ is of \emph{minimal degree} if $\deg \pM\leq\deg {\bf N}$ for any other matrix-completion~${\bf N}$ of $\vv$.
 
 A map $\mc\colon\rv\to\pmat$ is called a \emph{matrix-completion map} if $\mc( \vv)$ is a matrix-completion of $\vv$ for every $\vv\in\rv$.
 The map $\mc$ is of \emph{minimal degree} if  $\mc( \vv)$  is a minimal degree  matrix-completion of $\vv$ for every $\vv\in\rv$.
\end{definition}
\begin{example}\label{ex-mc} \rm Let  $$\vv=\begin{bmatrix} t^6+1\\t^3\\t\end{bmatrix}.$$ Each of following three matrices is a matrix completion of $\vv$
\beq\label{3mc}\pM_1=\begin{bmatrix} t^6+1&0 &t^3\\t^3&0&1\\t&-1&0\end{bmatrix},\quad 
\pM_2=\begin{bmatrix} t^6+1&1 &-t^2\\t^3&1&0\\t&t&1\end{bmatrix},\quad 
\pM_3=\begin{bmatrix} t^6+1&1 &t^3\\t^3&0&1\\t&t-1&0\end{bmatrix}. \eeq 
We have 
$$\deg\pM_1=\deg\pM_2=9\text{ and }\deg\pM_3=10.$$
As we will show in Example~\ref{ex-mmc}, $\pM_1$ and $\pM_2$ have the minimal degree. Nonetheless, column-wise their degree are different  (except, of course, for the first column, which is  equal to $\vv$ for any matrix completion of $\vv$).  In Section~\ref{sect-mmc} we will explicitly construct an equivariant minimal-degree matrix-completion map.
\qq
\end{example}
Before defining an equivariant matrix-completion map, we observe 
that the $SL_n(\K)\times\K$-action~\eq{Ls-act} can be restricted to the set of regular vectors $\rv$. We  also need the following lemma.
\begin{lemma} \label{lem-mc}Let $\mc\colon \rv\to  \K[t]^{n\times n}$  be a  matrix-completion map.  For any  $(L,s)\in \slnk\times \K$, the map $\emc\colon \rv\rightarrow \K[t]^{n\times n}$ 
defined by 
\beq\label{eq-act-mc}\emc(\vv)=(L,s)\cdot \mc\left((L,s)^{-1}\cdot\vv\right)\eeq
 is  a  matrix-completion map.
\end{lemma}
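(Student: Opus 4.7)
The plan is to verify directly the two defining properties of a matrix-completion map for $\emc$: that $\emc(\vv)_{*1} = \vv$ and $|\emc(\vv)| = 1$ for every $\vv \in \rv$. The computation is straightforward once we unpack the action $(L,s) \cdot \pW(t) = L\pW(t+s)$ and its inverse $(L,s)^{-1} = (L^{-1}, -s)$.

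First I would observe that the $\slnk \times \K$-action \eqref{Ls-act} restricts to $\rv$ (this is analogous to the restriction of \eqref{all-act} to $\rc$ noted just before Definition~\ref{def-orb}, and boils down to the facts that $\gcd$, linear independence of components, and degree are all preserved under multiplication by an invertible constant matrix and a parameter shift). Hence $\tilde\vv := (L,s)^{-1} \cdot \vv \in \rv$, so $\mc(\tilde\vv)$ is a bona fide matrix-completion of $\tilde\vv$, meaning $\mc(\tilde\vv)_{*1} = \tilde\vv$ and $|\mc(\tilde\vv)| = 1$.

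Next I would check the first column of $\emc(\vv) = (L,s) \cdot \mc(\tilde\vv)$. Since the action on a matrix $\pW(t)$ is $L\pW(t+s)$ and matrix multiplication acts column-wise, the first column of $L\,\mc(\tilde\vv)(t+s)$ is $L\,\mc(\tilde\vv)_{*1}(t+s) = L\,\tilde\vv(t+s) = L \cdot L^{-1}\vv((t+s)-s) = \vv(t)$, using $\tilde\vv(t) = L^{-1}\vv(t-s)$.

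For the determinant, I would simply compute $|\emc(\vv)(t)| = |L\,\mc(\tilde\vv)(t+s)| = |L|\cdot|\mc(\tilde\vv)(t+s)| = 1 \cdot 1 = 1$, since $|L|=1$ and the parameter shift of the polynomial $|\mc(\tilde\vv)(t)| \equiv 1$ is still $1$. This completes both checks and shows $\emc$ is a matrix-completion map.

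There is no real obstacle here — the lemma is essentially a formal consequence of the fact that the $\slnk \times \K$-action preserves both the ``first column'' structure (since $L$ acts by left multiplication and the shift acts on all columns uniformly) and the determinant (since $L \in \slnk$ and parameter shifts commute with taking determinants). The only point requiring a line of care is correctly tracking the inverse $(L^{-1},-s)$ and verifying the composition $L \cdot L^{-1} = I$ together with $(t+s)-s = t$ cancel exactly, which is why the construction \eqref{eq-act-mc} is precisely the conjugation needed to keep the output a completion of the \emph{original} $\vv$.
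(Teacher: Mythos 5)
Your proposal is correct and follows essentially the same route as the paper: both verify the two defining properties of Definition~\ref{def-mc} directly, using that left multiplication by $L\in SL_n(\K)$ together with a uniform parameter shift preserves the first-column structure and the determinant, with the inverse $(L^{-1},-s)$ making the conjugation cancel back to the original $\vv$. Your preliminary remark that the action restricts to $\rv$ (so that $\mc$ may be applied to $(L,s)^{-1}\cdot\vv$) matches the observation the paper makes immediately before the lemma.
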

\begin{proof} We check the two properties in Definition~\ref{def-mc}:
\begin{enumerate}
\item \begin{align*}\emc(\vv)_{*1}&=\big[(L,s)\cdot \mc\left((L,s)^{-1}\cdot\vv\right)\big]_{*1}=(L,s)\cdot \big[\mc\left((L^{-1},-s)\cdot\vv\right)\big]_{*1}\\
&=(L,s)\cdot \left((L^{-1},-s)\cdot\vv\right)=\vv.
\end{align*}
\item  \begin{align*} \big|\emc(\vv)\big|&=\big|(L,s)\cdot \mc\left((L,s)^{-1}\cdot\vv\right)\big|=s\cdot \big|L\mc\left((L,s)^{-1}\cdot\vv\right)\big|\\&=s\cdot \big|L\big|\big|\mc\left((L,s)^{-1}\cdot\vv\right)\big|=1.\end{align*} 
\end{enumerate}
\end{proof}

We are now ready to give a formal definition  of a  matrix-completion map which is equivariant under linear transformations of the ambient space and the parameter shift. 

\begin{definition}[equivariant matrix-completion map]\label{def-emc} 
A matrix-completion map $\mc$  is called \emph{$SL_n(\K)\times\K$-equivariant} if  it is equivariant under the action \eqref{Ls-act} on   $\rv$ and $ \K[t]^{n\times n}$, i.e.
\beq\label{sL-mc}
\underset{\vv\in\rv}{\forall}\ \underset{(\l,s)\in SL_{n}\left(\K\right)\times \K }{\forall}\ \mc\left( (\l,s)\cdot \vv\right)  
=(\l,s)\cdot\mc\left(  \vv\right).  
\eeq
\end{definition}
From Proposition~\ref{prop-diff}, it  is  clear that the equivariant moving frame map  described in Definition~\ref{def-eamf}  can be obtained  from a $SL_n(\K)\times\K$-equivariant matrix-completion   map  described in Definition~\ref{def-emc}. For convenience, we formulate the following trivial but useful theorem.
\begin{theorem}\label{thm-trivial} If $\mc\colon\rv\to \pmat$ is a matrix-completion map, then $\mfm\colon\rc\to\pmat$ defined by
$$\mfm(\cc)=\mc(\cc')$$
is a moving frame map. Moreover:
\begin{enumerate}
\item  If $\mc$ is of minimal degree, then so is $\mfm$ (see Definitions~\ref{def-mc} and~\ref{def-mf}).
\item If  $\mc$ is $SL_n(\K)\times\K$-equivariant, then   $\mfm$ is equi-affine (see Definitions~\ref{def-emc} and~\ref{def-eamf}).
\end{enumerate}
\end{theorem}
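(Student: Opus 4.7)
The plan is to verify each assertion by pushing the corresponding property of the matrix-completion map $\mc$ through the differentiation operator, relying on Proposition~\ref{prop-diff} and the column-degree bookkeeping of Definition~\ref{def-mdeg}. First I would establish that $\mfm$ is well-defined as a moving frame map: Proposition~\ref{prop-diff} guarantees that $\cc' \in \rv$ whenever $\cc \in \rc$, so $\mc(\cc')$ exists and is a matrix-completion of $\cc'$ per Definition~\ref{def-mc}. Rewriting the two defining conditions of a matrix-completion in terms of $\cc$ gives exactly the two conditions of Definition~\ref{def-mf}, so $\mfm(\cc)$ is a moving frame of $\cc$.

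For the minimal-degree claim, I would observe that the degree of a polynomial matrix (sum of column degrees) is the same in both contexts, and that the set of moving frames of $\cc$ coincides with the set of matrix-completions of $\cc'$. Hence if some moving frame $\pG$ of $\cc$ had $\deg \pG < \deg \mfm(\cc) = \deg \mc(\cc')$, then $\pG$ would be a matrix-completion of $\cc'$ of strictly smaller degree, contradicting the minimality of $\mc(\cc')$. This transfers minimality of $\mc$ to minimality of $\mfm$ directly.

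For the equi-affine claim, I would exploit how differentiation interacts with the three relevant actions on polynomial curves. Translations are killed by differentiation, so $(a\cdot (L\cdot \cc))'=(L\cdot\cc)'=L\cdot\cc'$; and the chain rule gives $(s\cdot \cc)'=\cc'(t+s)=s\cdot \cc'$. Then the ambient equivariance follows from
\[
\mfm\bigl(a\cdot (L\cdot\cc)\bigr)=\mc\bigl((a\cdot (L\cdot\cc))'\bigr)=\mc(L\cdot\cc')=(L,0)\cdot\mc(\cc')=L\cdot\mfm(\cc),
\]
where the third equality is the $(L,0)$-case of Definition~\ref{def-emc}. Parameter-shift equivariance follows identically with $(I,s)$ in place of $(L,0)$:
\[
\mfm(s\cdot \cc)=\mc\bigl((s\cdot \cc)'\bigr)=\mc(s\cdot\cc')=s\cdot\mc(\cc')=s\cdot\mfm(\cc).
\]

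There is no real obstacle here; the theorem is essentially a bookkeeping statement. The only thing one has to be careful about is that the action of $\K^n$ by translation disappears upon differentiation, which is precisely the $\K^n$-invariance asserted in Proposition~\ref{prop-diff} and is what allows a $SL_n(\K)\times\K$-equivariant completion map to lift to a map equivariant under the strictly larger group $SA_n(\K)\times\K$.
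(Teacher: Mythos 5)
Your proof is correct and is exactly the routine verification the paper omits (the theorem is stated as ``trivial but useful'' with no written proof): differentiation kills translations and commutes with parameter shifts and left multiplication by $L$, the set of moving frames of $\cc$ equals the set of matrix-completions of $\cc'$, and the degree notion is the same on both sides, so all three properties transfer directly.
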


 In Section~\ref{subs-eqmc}, we present a constructive proof of the existence  of an equivariant matrix-completion   map. We base this proof on a general equivariantization method presented in the next section.  
\subsection{A process of equivariantization}\label{subs-equiv}
 The equivariantization process presented here  is a variation of Fels and Olver's  invariantization process \cite{FO99}.  For an arbitrary group action it is summarized in Proposition~\ref{prop-eqv-map}, while Proposition~\ref{prop-G1G2} addresses the situation  when the acting group is a direct product. 
\begin{definition}\label{def-sect} 
Let  a group  $\gva$  act on a set $\zva$. 
A map $\rho\colon \zva\to \gva$ will be called a \emph{$\gva$-equivariant section}  if it is equivariant under the  $\gva$-action on $\zva$  and    the $\gva$-action \eqref{self-act} on itself, i.e.
\beq\label{geq-rho} \underset{\pz\in\zva}{\forall}\ \underset{\g\in \gva }{\forall}\rho\left( \g\cdot
\pz\right)  =\g\star\,\rho\left(  \pz\right). \eeq
The map $\pi \colon \zva \to \zva$ defined by 
\beq\label{pi}\pi(\pz)=\rho (\pz)^{-1}\cdot\pz\eeq
will be called the \emph{$\rho$-canonical map} and its image $\cs$ will be called the \emph{$\rho$-canonical  set}.

\end{definition}
 Both $\cs$ and $\pi$ are determined by $\rho$, and a different choice of $\rho$ leads to a different  canonical map and set. 

\begin{remark} \label{rem-FO} \rm A reader familiar with  Fels and Olver paper \cite{FO99} will notice that our definition appears  to recall several key notions from that paper under {\em different names}. One of the reasons to  introduce a different terminology is to underscore
the following \emph{subtle but important difference}. In \cite{FO99}, the construction is done in the category of smooth manifolds: a Lie group $\gva$  acts  on a smooth manifold $\zva$  and the map $\rho$ is  required to be smooth. In the smooth setting,   $\rho$ and, therefore, $\pi$,  might exist only locally. Our construction  is done in the category of sets: no additional structure is assumed on the group~$\gva$ or the sets $\zva$ 
and~$\cs$. Maps $\rho$ and $\pi$  are defined everywhere on $\zva$ and are not required to respect any structure on the underlying sets. The following ``dictionary'' provides a further justification  for our terminology choice and comparison with \cite{FO99}.
\begin{itemize}
\item The map $\rho$  is called a \emph{left moving frame} in \cite{FO99} because a classical  geometric moving frame, such as of Frenet or Darboux, can be viewed as an equivariant map from an appropriate jet space  to  an appropriate  group. We choose a different name to avoid a confusion with the notion of an equi-affine frame defined in this paper. Our choice of the name is  informed by the following consideration:  the map $(\rho, Id_\zva)\colon\zva\to\gva\times \zva$ is a section of a trivial bundle $\gva\times \zva\to\zva$, equivariant under the action of $\gva$ on  $\zva$ and  the action of $\gva$ on~$\gva\times \zva$, where the latter action is defined by $\g\cdot(\tilde \g,\pz)=(\g\star\tilde \g, \g\cdot \pz)$.
\item The set $\cs$ is called a \emph{cross-section} in  \cite{FO99} because it is required to intersect  orbits transversally. In our setting, we show, in  Lemma~\ref{lem-pi}, that  for all $\pz\in\zva$, the orbit $\orb_\pz$ intersects~$\cs$ at the unique  point $\pi(\pz)$. Thus $\pi(z)$ can be viewed  as a canonical representative of the orbit $\orb_\pz$. This justifies our choice of calling $\cs$ a  canonical set and  calling $\pi$ a canonical map.\end{itemize}
 From the methodology point of view, Fels-Olver's construction usually starts with a choice of a cross-section  $\cs$, while   condition \eq{pi} \footnote{In fact,  Fels and Olver use condition  $\rho (\pz)\cdot\pz\in\cs$  to define, what they call, a \emph{right  moving frame},  while \eq{pi} defines a \emph{left moving frame}. Left and right frames are  related  by the group-inversion map. Classical differential geometric frames are left frames.} is used to  implicitly define the corresponding equivariant map~$\rho$.  Our approach is to directly  construct an equivariant   $G=\slnk\times\K$-section $\rho\colon \rv\to G$ (which turns out to be a discontinuous map)  using a variation of inductive construction \cite{kogan01,kogan03}.
\qq
\end{remark}

Keeping in mind the differences outlined in Remark~\ref{rem-FO} above, we present a relevant adaptation of the results  presented in \cite{FO99} in the following two lemmas:
\begin{lemma}[freeness]\label{lem-free} For an action of $\gva$ on $\zva$, a $\gva$-equivariant section $\rho\colon\zva\to\gva$ exists if and only if the action is free.
\end{lemma}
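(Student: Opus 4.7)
The plan is to prove the two implications separately. The forward direction is purely algebraic; the backward direction is an application of the axiom of choice.

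For the forward direction, I will assume that a $\gva$-equivariant section $\rho\colon\zva\to\gva$ exists and show freeness. Suppose $\g\cdot\pz=\pz$ for some $\g\in\gva$ and $\pz\in\zva$. Applying $\rho$ to both sides and using the equivariance property \eqref{geq-rho}, I obtain
\[
\rho(\pz)=\rho(\g\cdot\pz)=\g\star\rho(\pz).
\]
Right-multiplying by $\rho(\pz)^{-1}$ yields $\g=e$, which establishes freeness per Definition~\ref{def-act}.

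For the backward direction, I will assume the action is free and construct an equivariant section. The construction uses the axiom of choice: from the partition of $\zva$ into orbits $\{\orb_\pz \mid \pz\in\zva\}$, select one representative $\pz_\orb$ from each orbit $\orb$. Given any $\pz\in\zva$, it belongs to a unique orbit $\orb_\pz$ containing the chosen representative $\pz_{\orb_\pz}$, and freeness guarantees the existence of a \emph{unique} $\g_\pz\in\gva$ with $\g_\pz\cdot\pz_{\orb_\pz}=\pz$. Define $\rho(\pz)=\g_\pz$. To verify equivariance, take any $\h\in\gva$: since $\h\cdot\pz$ lies in the same orbit as $\pz$, the defining identity gives
\[
\rho(\h\cdot\pz)\cdot \pz_{\orb_\pz}=\h\cdot\pz=\h\cdot\left(\rho(\pz)\cdot\pz_{\orb_\pz}\right)=(\h\star\rho(\pz))\cdot\pz_{\orb_\pz},
\]
and uniqueness (again from freeness) forces $\rho(\h\cdot\pz)=\h\star\rho(\pz)$, as required by \eqref{geq-rho}.

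There is no genuine technical obstacle here; both directions are short once the right bookkeeping is set up. The only conceptual subtlety is that the existence proof is non-constructive and relies on the axiom of choice, which is consistent with Remark~\ref{rem-FO}'s emphasis that $\rho$ need not respect any structure on $\zva$. In the sequel, of course, the actual equivariant section for $G=\slnk\times\K$ acting on $\rv$ will be built explicitly and without choice, so this lemma serves only to motivate why freeness is the natural hypothesis to aim for.
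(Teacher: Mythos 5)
Your proof is correct and follows essentially the same route as the paper's: the forward direction is the identical cancellation argument, and the backward direction is the same orbit-representative construction (the paper also implicitly invokes choice when it "chooses one element in each orbit"), with equivariance deduced from the uniqueness furnished by freeness. The only cosmetic difference is that the paper phrases the uniqueness step as showing $\rho(\pz)\star\rho(\g\cdot\pz)^{-1}\star\g=e$, which is equivalent to your direct appeal to uniqueness of the transporting group element.
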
 
\begin{proof}
\begin{itemize}
\item[($\Rightarrow$)] Assume an equivariant section $\rho\colon\zva\to\gva$ exists. Let $\pz\in\zva$ and $\g\in \gva$ be such that~$\g\cdot\pz=\pz$. Then $ \rho(\pz)=\rho(\g\cdot\pz)= \g\star \rho(\pz)$ and so $\g=e$, implying the freeness of the action.

\item[($\Leftarrow$)] Assume the action is free.  Let us choose one element in each orbit (which we can call a canonical point)  and denote the set of such canonical points as $\cs$. Let $\orb_\pz$ be the orbit of~$\pz\in \zva$ under the $\gva$-action.
Since the orbits partition $\zva$, for any $\pz\in\zva$, the set  $\cs\cap\orb_\pz$ consists of a single point and so  the map $\pi\colon\zva\to \cs$   sending a point $\pz\in\zva$ to $\cs\cap\orb_\pz$ is well defined and is $\gva$-invariant. For each $\pz\in \zva$,  since $\pz$ and $\pi(\pz)$ belong to the same orbit there is an element in $\gva$, sending $\pi(\pz)$ to $\pz$, and due to freeness of the action, such element is unique. Thus  a  map $\rho\colon\zva\to \gva$ is well defined by the condition $\pz=\rho(\pz)\cdot\pi(\pz)$.  To show its equivariance observe that for all $\pz\in \zva$ and $g\in\gva$:
$$ \pi(z)=\pi(g\cdot\pz)\,\, \Longrightarrow\,\, \rho(\pz)^{-1}\cdot \pz=\rho(g\cdot\pz)^{-1}\cdot (g\cdot\pz)\,\, \Longrightarrow\,\, \pz=(\rho(\pz)\star \rho(g\cdot\pz)^{-1}\star g)\cdot\pz.$$
The freeness of the action implies that $\rho(\pz)\star \rho(g\cdot\pz)^{-1}\star g=e$, which in turn implies the equivariance of $\rho$:
$$ \rho(g\cdot\pz)=\g\star\rho(\pz).$$
\end{itemize}

\end{proof}

\begin{lemma}\label{lem-pi} 
Let a group  $\gva$  act on a set $\zva$. 
Let $\rho\colon \zva\to \gva$ be a $\gva$-equivariant section.
Let  $\pi$  be the $\rho$-canonical map and  let  $\cs$ be the
 $\rho$-canonical set (see Definition~\ref{def-sect}). 
Then:
\begin{enumerate} 
\item  $\pi$ is $\gva$-invariant.
\item  $\pi$ is a projection on  $\cs$.
 
\item $\pi(\pz)=\cs\cap \orb_\pz$, where $ \orb_\pz$ is the orbit of $\pz$ under the $\gva$-action.
\item $\rho(\pz)=e$ if and only if $\pz\in \cs$.
\end{enumerate} 
\end{lemma}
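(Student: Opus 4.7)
The plan is to derive each of the four properties directly from the equivariance condition \eq{geq-rho} and the definition $\pi(\pz) = \rho(\pz)^{-1}\cdot \pz$, using only the associativity and identity axioms for group actions. No additional structure on $\zva$ or $\gva$ is needed; the entire argument is a short sequence of algebraic manipulations, and properties (2)--(4) each reduce quickly once (1) is in hand.

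I would begin by establishing (1) via direct computation: for any $\g\in\gva$ and $\pz\in\zva$,
\[
\pi(\g\cdot \pz) \;=\; \rho(\g\cdot \pz)^{-1}\cdot(\g\cdot \pz) \;=\; (\g\star \rho(\pz))^{-1}\cdot \g \cdot \pz \;=\; \rho(\pz)^{-1}\cdot \pz \;=\; \pi(\pz),
\]
where the second equality uses equivariance \eq{geq-rho} and the third uses associativity of the action together with the identity axiom. For (2), since $\pi(\pz) = \rho(\pz)^{-1}\cdot \pz$ lies in the orbit of $\pz$, applying (1) with $\g = \rho(\pz)^{-1}$ gives $\pi(\pi(\pz)) = \pi(\pz)$, so $\pi\circ\pi = \pi$; moreover, each $k\in\cs$ equals $\pi(\pz')$ for some $\pz'$, so $\pi(k) = \pi(\pi(\pz')) = \pi(\pz') = k$, showing $\pi$ restricts to the identity on $\cs$.

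For (3), the inclusion $\pi(\pz)\in \cs\cap\orb_\pz$ is immediate from the definitions of $\pi$ and $\cs$. For uniqueness, any $w\in\cs\cap\orb_\pz$ can be written $w = \g\cdot \pz$ for some $\g\in\gva$; invariance (1) then yields $\pi(w) = \pi(\pz)$, while $w\in\cs$ combined with (2) yields $\pi(w) = w$, so $w = \pi(\pz)$. Finally, for (4): if $\rho(\pz)=e$ then $\pi(\pz)=e\cdot \pz=\pz\in\cs$; conversely, if $\pz\in\cs$ then $\pz=\pi(\pz')=\rho(\pz')^{-1}\cdot \pz'$ for some $\pz'$, and applying equivariance \eq{geq-rho} gives $\rho(\pz) = \rho(\rho(\pz')^{-1}\cdot \pz') = \rho(\pz')^{-1}\star \rho(\pz') = e$.

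I do not anticipate any substantial obstacle; the subtlest point is the uniqueness in (3), where one must combine invariance of $\pi$ from (1) with the idempotence from (2), but this is still routine. The whole proof will be roughly one short paragraph per item.
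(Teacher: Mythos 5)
Your proof is correct and follows essentially the same route as the paper's: the same direct computation for invariance of $\pi$, idempotence via invariance, and the same two-inclusion argument for $\pi(\pz)=\cs\cap\orb_\pz$. The only (minor) divergence is in the converse of item (4): the paper writes $\pz=\pi(\pz)=\rho(\pz)^{-1}\cdot\pz$ and then invokes freeness of the action (from Lemma~\ref{lem-free}) to conclude $\rho(\pz)=e$, whereas you apply equivariance \eq{geq-rho} directly to $\pz=\rho(\pz')^{-1}\cdot\pz'$ to get $\rho(\pz)=\rho(\pz')^{-1}\star\rho(\pz')=e$ — both are valid, and yours avoids the appeal to freeness.
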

\begin{proof}
\begin{enumerate}
\item To prove $\gva$-invariance of $\pi$, observe that for all  $\pz\in \zva$ and  all $\g\in \gva$
\begin{align*} \pi( \g\cdot\pz)&=\rho(\g\cdot \pz)^{-1} \cdot(\g\cdot \pz)=(\g\star \rho(\pz))^{-1} \cdot(\g\cdot \pz) \\
&=\left(\rho(\pz)^{-1} \star g^{-1} \right)\cdot(\g\cdot \pz)=\rho (\pz)^{-1}\cdot\pz=\pi (\pz),
\end{align*}
where we used the $\gva$-equivariance \eq{geq-rho} of  $\rho$ and the associativity property  of an action (see Definition~\ref{def-act}).  
\item To prove that $\pi$ is a projection on its image, observe that
$$ \pi(\pi(\pz))=\pi\left(\rho (\pz)^{-1}\cdot\pz\right)=\pi(\pz)$$
due to the $\gva$-invariance of $\pi$.
\item From \eq{pi} and the fact that $\rho(\pz)\in \gva$, it follows that $\pi(\pz)\in \orb_\pz$. Since $\cs$ is defined as the image of $\pi$, we also have  $\pi(\pz)\in \cs$. Thus $\pi(\pz)\in \cs\cap \orb_\pz$. On the other hand, assume that $\bpz\in
 \cs\cap \orb_\pz$. Then   $\bpz\in \orb_\pz$ and so there exists $\g\in \zva$ such that $\bpz=g\cdot\pz$. Since $\pi$ is projection on $\cs$, we have
 $$\bpz=\pi(\bpz)=\pi(g\cdot\pz)=\pi(\pz),$$
 where  the last equality is due to the $\gva$-invariance of $\pi$. Thus $\pi(\pz)$ is the \emph{unique} point of the intersection of  $\orb_\pz$ with  $\cs$. 
 \item   Assume $\rho(\pz)=e$. Then $\pi(\pz)=\rho(\pz)^{-1}\cdot\pz=\pz$ and so $\pz\in \cs$. Conversely, assume $\pz\in \cs$. Since $\pi$ is a projection  $\pi(\pz)=\rho(\pz)^{-1}\cdot \pz=\pz$.  From the freeness of the action (Lemma~\ref{lem-free}) $\rho(\pz)^{-1}=e$ and so~$\rho(\pz)=e$.
    
\end{enumerate}
\end{proof}
\begin{remark}[geometric interpretation]\rm
Figure~\ref{fig-rho} is a schematic illustration of   Definition~\ref{def-sect} and Lemma~\ref{lem-pi}.  The orbits of a $\gva$-action on $\zva$ partition the set $\zva$ into equivalence classes.  Let~$\rho$ be a $\gva$-equivariant section, $\pi$  be the corresponding canonical map and $\cs=Im\pi$ be the corresponding canonical set.  Each orbit intersects $\cs$ at the unique point that can be viewed as a canonical representative of the orbit. The canonical map $\pi$ sends the entire orbit to  this  canonical representative:  $\pi(\pz)=\pi(\g\cdot\pz)=\orb_\pz\cap\cs$ for all $\pz\in\zva$ and $\g\in \gva$.  It follows from \eq{pi}  that   $\pz=\rho(\pz)\cdot\pi(z)$ for all $\pz\in\zva$.

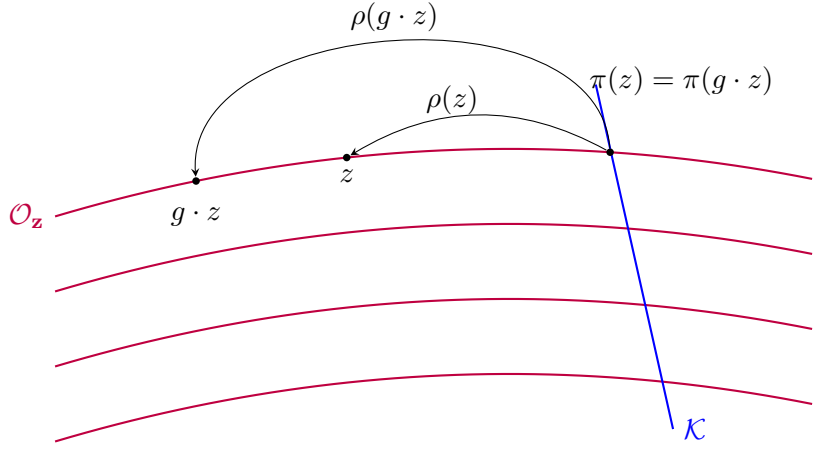
\begin{figure}[h!]

\centering
\begin{tikzpicture}
\begin{axis}[
  axis lines = none,
  xmin = -3, xmax = 0, ymin = -1, ymax = 4,
  domain = -3:2,
  restrict y to domain = -1:4,
  width = 3.0in,
  samples = 50,
  clip=false
  ]
  \foreach\z in {0,...,2} \addplot[purple, thick]{-0.1*x^2+\z};
  
  \addplot[purple, thick, name path = curve]{-0.1*x^2+3}
  coordinate[pos = .2]   (p1)
  coordinate[pos = .4]   (p2)
  coordinate[pos = .735] (pt)
  node[left,pos=0]{${\mathcal O}_{\bf z}$};
  
  \addplot[blue, thick, name path = blueline]{-9*(x-1)}
  node[right]{$\mathcal K$};
  
  
  \begin{scope}[every node/.style={circle, fill, inner sep=1pt}]
    \node (p1n) at (p1) [label={below:$\g\cdot\pz$}] {};
    \node (p2n) at (p2) [label={below:$\pz$}] {};
    \node (ptn) at (pt) [label={above right:$\pi(\pz)=\pi(\g\cdot\pz)$}] {};
  \end{scope}

    \draw [<-, >=stealth, bend left=90] (p1n) to node[above=] {$\rho(\g\cdot\pz)$} (ptn);
    \draw [<-, >=stealth, bend left] (p2n) to node[pos=.4, above=-2.8pt] {$\rho(\pz)$} (ptn);
\end{axis}
\end{tikzpicture}
\caption{Equivariant section $\rho$, canonical map $\pi$, and canonical set $\cs$.}
\label{fig-rho}
\end{figure}    
\noindent \qq \end{remark}
The following proposition  shows that if  an action of  $\gva$ on $\zva$ admits a $\gva$-equivariant section then any map from $\zva$ to another set, where an action of $\gva$ is defined, can be turned into a $\gva$-equivariant map\footnote{{A similar result, developed independently of our work,  is used in a recent preprint  \cite{olver2024} to obtain equivariant maps relevant for the development of equivariant neural networks \cite{lim-nelson23}.}}. 
\begin{proposition}[equivariantization]\label{prop-eqv-map} Assume  we have actions of a group $\gva$ on two sets $\zva$   and $\yva$ and a map  $f\colon\zva\to\yva$.  Assume there exists an  equivariant  section $\rho\colon\zva\to\gva$. Let~$\pi\colon\zva\to\zva$ be the corresponding canonical map and $\cs$ be its image. 

Then the map
$$\tilde f(\pz)=\rho(z)\cdot f(\pi(\pz))$$
is the unique  $\gva$-equivariant map such that its restriction to $\cs$ agrees with $f$, i.e. $\tilde f|_\cs=f|_\cs$.%
\end{proposition}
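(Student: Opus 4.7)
The plan is to verify three things in sequence: that $\tilde f$ is $\gva$-equivariant, that it agrees with $f$ on the canonical set $\cs$, and that any $\gva$-equivariant extension of $f|_\cs$ must coincide with $\tilde f$. None of these steps should present a serious obstacle, since all the heavy lifting has already been done in Lemma~\ref{lem-pi}; the proof is essentially an exercise in unwinding the defining identity $\tilde f(\pz)=\rho(\pz)\cdot f(\pi(\pz))$ using the properties of $\rho$ and $\pi$.

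For equivariance, I would compute $\tilde f(\g\cdot\pz)$ directly. Using the equivariance \eqref{geq-rho} of $\rho$, the factor $\rho(\g\cdot\pz)$ becomes $\g\star\rho(\pz)$, and using the $\gva$-invariance of $\pi$ (Lemma~\ref{lem-pi}, part 1), the factor $f(\pi(\g\cdot\pz))$ becomes $f(\pi(\pz))$. Combining these with the associativity property of the action on $\yva$ yields $\tilde f(\g\cdot\pz) = (\g\star\rho(\pz))\cdot f(\pi(\pz)) = \g\cdot\tilde f(\pz)$, which is the desired equivariance.

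For the restriction to $\cs$, I would invoke parts 2 and 4 of Lemma~\ref{lem-pi}: if $\pz\in\cs$ then $\rho(\pz)=e$ (part 4) and $\pi(\pz)=\pz$ (part 2, since $\pi$ is a projection onto $\cs$). Substituting into the definition of $\tilde f$ gives $\tilde f(\pz)=e\cdot f(\pz)=f(\pz)$.

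For uniqueness, suppose $h\colon\zva\to\yva$ is any $\gva$-equivariant map with $h|_\cs=f|_\cs$. For an arbitrary $\pz\in\zva$, the identity \eqref{pi} gives $\pz=\rho(\pz)\cdot\pi(\pz)$. Applying $h$ and using its equivariance yields $h(\pz)=\rho(\pz)\cdot h(\pi(\pz))$; since $\pi(\pz)\in\cs$, we may replace $h(\pi(\pz))$ by $f(\pi(\pz))$, producing exactly $\tilde f(\pz)$. The only subtle point — and the mild obstacle worth flagging — is that the argument relies implicitly on the freeness of the action (through the well-definedness of $\rho$ from Lemma~\ref{lem-free}) and on the fact that $\cs$ meets every orbit exactly once; both facts are already packaged in Lemma~\ref{lem-pi}, so no additional work is required.
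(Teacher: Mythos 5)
Your proposal is correct and follows essentially the same route as the paper's proof: equivariance via the equivariance of $\rho$ and invariance of $\pi$ plus associativity, agreement on $\cs$ via Lemma~\ref{lem-pi}, and uniqueness via the identity $\pz=\rho(\pz)\cdot\pi(\pz)$. (Your closing remark about freeness is harmless but unnecessary, since the existence of the equivariant section $\rho$ is a hypothesis of the proposition and the uniqueness argument only needs $\pi(\pz)\in\cs$.)
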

\begin{proof}
We first show that $\tilde f$ is equivariant  (see Definition~\ref{def-eqv}):
\begin{align*}\tilde f(\g\cdot \pz)&=\rho(\g\cdot\pz)\cdot f(\pi(\g\cdot\pz))=\left(\g\star\rho(\pz)\right)\cdot f(\pi(\pz))=\g\cdot\left(\rho(\pz)\cdot f(\pi(\pz))\right)
=\g\cdot \tilde f(\pz),
\end{align*}
where the second equality follows from  the equivariance of $\rho$ and the invariance of  $\pi$, while the third equality follows from the associativity property of a group action.

Since  for any $\bpz\in\cs$, we have $\pi(\bpz)=\bpz$ and $\rho(\bpz)=e\in \gva$ by Lemma~\ref{lem-pi}, we immediately have that $\tilde f|_\cs=f|_\cs$. 

To show  uniqueness, let  $\hat f\colon\zva\to \yva$ be another $\gva$-equivariant map such that $\hat f|_\cs=f|_\cs$. We note that  $\pz=\rho(\pz)\cdot \pi(\pz)$ by \eq{pi}.
Then
$$\hat f(\pz)=\hat f\left(\rho(\pz)\cdot \pi(\pz)\right)=\rho(\pz)\cdot \hat f(\pi(\pz))=\rho(\pz)\cdot f(\pi(\pz))=\tilde f(\pz),$$
where the second equality is due to the equivariance of $\hat f$  and the third is due to the facts that~$\pi(\pz)\in \cs$ and $\hat f|_\cs=f|_\cs$.
\end{proof}

We note the following useful corollary:
\begin{corollary}\label{cor-eqv} Under the assumptions of Proposition~\ref{prop-eqv-map}:
\begin{enumerate}
\item The map $f$ is $G$-equivariant if and only if  $f(\pz)=\rho(z)\cdot f(\pi(\pz))$ for all $\pz\in \zva$.
\item The map $f$ is $G$-invariant if and only if  $f(\pz)= f(\pi(\pz))$ for all $\pz\in \zva$. 
\end{enumerate}

\end{corollary}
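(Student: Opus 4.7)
The plan is to derive both parts directly from the defining identity $\pz = \rho(\pz)\cdot\pi(\pz)$ of \eqref{pi}, combined with the $\gva$-equivariance of $\rho$ (Definition~\ref{def-sect}) and the $\gva$-invariance of $\pi$ (Lemma~\ref{lem-pi}). There is no need to reconstruct the object $\tilde f$ from Proposition~\ref{prop-eqv-map}; the corollary is essentially the statement that the identity characterizing $\tilde f$ there characterizes \emph{every} equivariant (resp.\ invariant) map.

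For part 1, the forward direction is a single line: apply $f$ to $\pz=\rho(\pz)\cdot\pi(\pz)$ and use equivariance of $f$ to pull $\rho(\pz)$ out, obtaining $f(\pz)=\rho(\pz)\cdot f(\pi(\pz))$. For the backward direction, assume $f(\pz)=\rho(\pz)\cdot f(\pi(\pz))$ for all $\pz$, and substitute $\g\cdot\pz$ in place of $\pz$. Using $\rho(\g\cdot\pz)=\g\star\rho(\pz)$, $\pi(\g\cdot\pz)=\pi(\pz)$, and the associativity of the $\gva$-action, one computes
\[
f(\g\cdot\pz) \;=\; \rho(\g\cdot\pz)\cdot f(\pi(\g\cdot\pz)) \;=\; (\g\star\rho(\pz))\cdot f(\pi(\pz)) \;=\; \g\cdot f(\pz),
\]
which is equivariance. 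Alternatively, one can appeal to the uniqueness clause of Proposition~\ref{prop-eqv-map}: $f$ and $\tilde f$ are both $\gva$-equivariant and agree on $\cs$ (since $\rho\equiv e$ on $\cs$ by Lemma~\ref{lem-pi}(4)), so they coincide.

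Part 2 is then immediate. If $f$ is $\gva$-invariant then, because $\pi(\pz)\in\orb_\pz$ by Lemma~\ref{lem-pi}(3), we get $f(\pz)=f(\pi(\pz))$. Conversely, if $f(\pz)=f(\pi(\pz))$, the $\gva$-invariance of $\pi$ gives $f(\g\cdot\pz)=f(\pi(\g\cdot\pz))=f(\pi(\pz))=f(\pz)$. One may also view invariance as equivariance with respect to the trivial $\gva$-action on $\yva$, so that part 2 reduces to part 1 by replacing $\rho(\pz)\cdot y$ with $y$. I do not foresee any real obstacle: every ingredient needed—the factorization \eqref{pi}, equivariance of $\rho$, invariance of $\pi$, and associativity of the action—is already in hand from the material preceding the corollary.
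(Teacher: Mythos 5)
Your proof is correct, and it follows essentially the same route the paper intends: the paper states the corollary without proof as an immediate consequence of Proposition~\ref{prop-eqv-map}, and your direct verification (using $\pz=\rho(\pz)\cdot\pi(\pz)$, the equivariance of $\rho$, and the invariance of $\pi$) is exactly the computation already carried out in that proposition's proof, with the uniqueness-clause alternative being the other standard shortcut.
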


In general, the explicit construction of an equivariant section can be very challenging. The following results allow us to significantly  facilitate this construction  when  $\gva$ is a direct product (see Remark~\ref{rem-act}). This is a variation of the inductive construction presented in \cite{kogan01,kogan03} (see also subsequent  generalizations in \cite{O-rec11,V-ind13, OV-rec18}).  
\begin{lemma}\label{lem-direct1}    Let two groups $\gva_1$ and $\gva_2$ act on a set $\zva$, such that  each action admits an equivariant section, $\rho_1\colon\zva\to\gva_1$ and $\rho_2\colon\zva\to\gva_2$, respectively,  with the corresponding canonical maps $\pi_1\colon \zva\to \zva$  and  $\pi_2\colon \zva\to \zva$.  Assume the actions of $\gva_1$ and $\gva_2$ commute, therefore, defining an action of  the direct product $\gva=\gva_1\times\gva_2$ on $\zva$. 

 \emph{Then the following  statements are equivalent:}
\begin{enumerate}
\item $\rho=(\rho_1,\rho_2)$ is a $\gva$-equivariant section. 
\item $\rho_1$ is a $\gva_2$-invariant map and $\rho_2$ is  a $\gva_1$-invariant map.

\end{enumerate}
\end{lemma}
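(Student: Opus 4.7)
The plan is to unpack the $G$-equivariance condition for $\rho=(\rho_1,\rho_2)$ componentwise and use the hypothesis that each $\rho_i$ is already $G_i$-equivariant to reduce each component of that condition to an invariance statement.

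First I would write out explicitly what it means for $\rho$ to be a $G$-equivariant section. By the definition of the product action (Remark~\ref{rem-act}, part~3), for $(g_1,g_2)\in G_1\times G_2$ and $\pz\in\zva$ we have $(g_1,g_2)\cdot\pz=g_1\cdot(g_2\cdot\pz)=g_2\cdot(g_1\cdot\pz)$. The equivariance condition \eqref{geq-rho} applied to $\rho$ becomes, componentwise,
\[
\rho_1\bigl(g_1\cdot(g_2\cdot\pz)\bigr)=g_1\star\rho_1(\pz)\qquad\text{and}\qquad
\rho_2\bigl(g_2\cdot(g_1\cdot\pz)\bigr)=g_2\star\rho_2(\pz).
\]

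Next, I would apply the individual $G_i$-equivariance of each $\rho_i$ on the left-hand sides. For the first equation, $G_1$-equivariance of $\rho_1$ gives $\rho_1\bigl(g_1\cdot(g_2\cdot\pz)\bigr)=g_1\star\rho_1(g_2\cdot\pz)$, so the identity reduces to $g_1\star\rho_1(g_2\cdot\pz)=g_1\star\rho_1(\pz)$, which by left-cancellation in $G_1$ is equivalent to $\rho_1(g_2\cdot\pz)=\rho_1(\pz)$, i.e.\ the $G_2$-invariance of $\rho_1$. By the symmetric argument, using the $G_2$-equivariance of $\rho_2$ and left-cancellation in $G_2$, the second equation reduces to $\rho_2(g_1\cdot\pz)=\rho_2(\pz)$, i.e.\ the $G_1$-invariance of $\rho_2$.

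Putting the two reductions together, the $G$-equivariance of $\rho=(\rho_1,\rho_2)$ holds for all $(g_1,g_2)\in G_1\times G_2$ and all $\pz\in\zva$ if and only if both invariance statements hold, which gives the desired equivalence. There is no serious obstacle here: the whole argument is a direct unpacking of definitions, the only subtle point being the bookkeeping needed to keep track of which variable each group element acts through, and I would rely on the commutativity of the actions to swap $g_1\cdot(g_2\cdot\pz)$ with $g_2\cdot(g_1\cdot\pz)$ freely when isolating the second component.
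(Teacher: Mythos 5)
Your proposal is correct and follows essentially the same route as the paper's proof: both expand the $\gva_1\times\gva_2$-equivariance condition componentwise, use the assumed $\gva_i$-equivariance of each $\rho_i$ together with the commutativity of the two actions, and cancel the group factor to reduce each component to the corresponding invariance statement. The only cosmetic difference is that you make the left-cancellation step explicit where the paper leaves it implicit.
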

\begin{proof}
The map $\rho=(\rho_1,\rho_2)$ is a $\gva$-equivariant section if and only if  for  any $g_1\in \gva_1$ and $g_2\in \gva_2$:
\beq\label{eq-12}\rho\big((g_1,g_2)\cdot \pz\big)=(g_1,g_2)\cdot \rho(\pz).\eeq
Expanding the left-hand side of \eq{eq-12} using commutativity of the $\gva_1$ and $\gva_2$ actions, as well as  equivariance properties of sections $\rho_1$ and $\rho_2$, we have:
 $$\big(\rho_1 (g_1\cdot  g_2\cdot \pz), \rho_2 (g_1\cdot  g_2\cdot \pz)\big)=\big(g_1\star \rho_1 ( g_2\cdot \pz),g_2\star \rho_2 (g_1\cdot \pz)\big),$$
 while the right-hand side of \eq{eq-12} expands to $\big(g_1\star \rho_1 ( \pz),g_2\star \rho_2 ( \pz)\big)$. Equality  of the sides is equivalent to 
  $$   \rho_1 ( g_2\cdot \pz)= \rho_1 ( \pz) \text{ and }   \rho_2 ( g_1\cdot \pz)= \rho_2 ( \pz),$$
  which is, by definition, is  the second statement of the Lemma.
 \end{proof}

\begin{lemma}\label{lem-direct2}     Under the assumptions of Lemma~\ref{lem-direct1}:

\begin{enumerate}
\item If $\rho_1$ is a $\gva_2$-invariant map and $\rho_2$ is  a $\gva_1$-invariant map, then  $\pi_1\circ\pi_2=\pi_2\circ\pi_1$.
\item If  the action of $\gva$ is free\footnote{Warning: freeness and commutativity of the $\gva_1$ and $\gva_2$-actions does not imply freeness of the $\gva_1\times \gva_2$-action. (For a trivial counter-example,  consider the case when $\gva_1=\gva_2$ is a commutative group acting freely.)}, the converse is also true.
 \end{enumerate}
\end{lemma}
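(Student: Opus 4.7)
The plan is to prove the two directions by direct symbolic manipulation using the definition $\pi_i(\pz)=\rho_i(\pz)^{-1}\cdot\pz$, together with commutativity of the two actions. Nothing requires the structure of a specific group action; everything is formal.

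For part (1), I would compute both compositions and show they equal $\rho_1(\pz)^{-1}\cdot\rho_2(\pz)^{-1}\cdot\pz$. Expanding,
\[
\pi_1(\pi_2(\pz))=\rho_1(\pi_2(\pz))^{-1}\cdot\pi_2(\pz)=\rho_1\bigl(\rho_2(\pz)^{-1}\cdot\pz\bigr)^{-1}\cdot\rho_2(\pz)^{-1}\cdot\pz,
\]
and the $\gva_2$-invariance of $\rho_1$ collapses the inner term to $\rho_1(\pz)$, giving $\rho_1(\pz)^{-1}\cdot\rho_2(\pz)^{-1}\cdot\pz$. Symmetrically, the $\gva_1$-invariance of $\rho_2$ gives $\pi_2(\pi_1(\pz))=\rho_2(\pz)^{-1}\cdot\rho_1(\pz)^{-1}\cdot\pz$. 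Since $\rho_1(\pz)^{-1}\in\gva_1$ and $\rho_2(\pz)^{-1}\in\gva_2$ and the two actions commute, the two expressions agree.

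For part (2), assume $\gva=\gva_1\times\gva_2$ acts freely and $\pi_1\circ\pi_2=\pi_2\circ\pi_1$. Set $a:=\rho_1(\pi_2(\pz))^{-1}\in\gva_1$ and $b:=\rho_2(\pi_1(\pz))^{-1}\in\gva_2$. The hypothesis becomes
\[
a\cdot\rho_2(\pz)^{-1}\cdot\pz \;=\; b\cdot\rho_1(\pz)^{-1}\cdot\pz.
\]
Viewed as elements of $\gva_1\times\gva_2$ acting on $\pz$, this reads $(a,\rho_2(\pz)^{-1})\cdot\pz=(\rho_1(\pz)^{-1},b)\cdot\pz$, so by freeness $a=\rho_1(\pz)^{-1}$ and $b=\rho_2(\pz)^{-1}$. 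That is, $\rho_1(\pi_2(\pz))=\rho_1(\pz)$ and $\rho_2(\pi_1(\pz))=\rho_2(\pz)$ for all $\pz$.

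The main obstacle is the last mile: these identities only express that $\rho_1$ is constant along the particular displacement $\pz\mapsto\pi_2(\pz)$, not under arbitrary $g_2\in\gva_2$. The fix is to apply the same identity to the shifted point $g_2\cdot\pz$ and exploit the $\gva_2$-invariance of $\pi_2$ (Lemma~\ref{lem-pi}): for any $g_2\in\gva_2$,
\[
\rho_1(g_2\cdot\pz)=\rho_1\bigl(\pi_2(g_2\cdot\pz)\bigr)=\rho_1(\pi_2(\pz))=\rho_1(\pz),
\]
so $\rho_1$ is $\gva_2$-invariant. Swapping indices gives $\gva_1$-invariance of $\rho_2$, completing the proof.
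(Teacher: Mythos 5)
Your proof is correct and follows essentially the same route as the paper's: both directions reduce to the identities $\rho_1(\pi_2(\pz))=\rho_1(\pz)$ and $\rho_2(\pi_1(\pz))=\rho_2(\pz)$, with freeness of the product action used to ``cancel'' $\pz$ in the converse. The only difference is cosmetic --- where the paper invokes Corollary~\ref{cor-eqv}(2) to pass between these identities and full $\gva_2$- (resp.\ $\gva_1$-) invariance of $\rho_1$ (resp.\ $\rho_2$), you unpack that corollary's short argument explicitly via the invariance of $\pi_2$.
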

\begin{proof}
\begin{enumerate}
\item  From the second part of Corollary~\ref{cor-eqv}  we have
 \beq\label{eq-23} \rho_1(\pz)=\rho_1(\pi_2(\pz))\text{ and }  \rho_2(\pz)=\rho_2(\pi_1(\pz)).\eeq
 Therefore,
$$ \pi_1\circ\pi_2 (\pz)= \rho_1(\pi_2(\pz))^{-1}\cdot\pi_2(\pz)= \rho_1(\pz)^{-1}\cdot\rho_2(\pz)^{-1}\cdot \pz$$ 
and
$$ \pi_2\circ\pi_1 (\pz)= \rho_2(\pi_1(\pz))^{-1}\cdot\pi_1(\pz)= \rho_2(\pz)^{-1}\cdot\rho_1(\pz)^{-1}\cdot \pz$$ 
Due to commutativity of the of $\gva_1$ and $\gva_2$ actions $\pi_1\circ\pi_2=\pi_2\circ\pi_1$.
 \item  Observe that
  $$ \pi_1\circ\pi_2 (\pz)= \rho_1(\pi_2(\pz))^{-1}\cdot\pi_2(\pz)= \rho_1(\pi_2(\pz))^{-1}\cdot\rho_2(\pz)^{-1}\cdot \pz=\big(\rho_1(\pi_2(\pz),  \,\rho_2(\pz)^{-1}\big)\cdot\pz$$
 and  similarly:
   $$ \pi_2\circ\pi_1 (\pz)=  \big(\rho_1(\pz)^{-1}, \rho_2(\pi_1( \pz))^{-1}\big)\cdot\pz.$$
   Therefore, $\pi_1\circ\pi_2=\pi_2\circ\pi_1$ is equivalent to:
   $$\big(\rho_1(\pi_2(\pz),  \,\rho_2(\pz)^{-1}\big)\cdot\pz=\big(\rho_1(\pz)^{-1}, \rho_2(\pi_1( \pz))^{-1}\big)\cdot\pz.$$

  The freeness assumption  allows us to ``cancel'' $\pz$ and conclude that
   \beq\label{eq-23} \rho_1(\pz)=\rho_1(\pi_2(\pz))\text{ and }  \rho_2(\pz)=\rho_2(\pi_1(\pz)).\eeq
    Then, by the second part of Corollary~\ref{cor-eqv}, 
    $\rho_1$ is $\gva_2$-invariant and    $\rho_2$ is $\gva_1$-invariant.
 \end{enumerate}
\end{proof}

Lemma~\ref{lem-direct1} shows that   a $\gva_1$-equivariant section, invariant under the $\gva_2$-action, and a $\gva_2$-equivariant section, invariant under the $\gva_1$-action, can be easily combined to construct a  $\gva_1\times\gva_2$-equivariant section.  
For the problem at hand,  in Theorem~\ref{thm-rho12}  below, we manage to construct a $\gva_2$-equivariant section  invariant under  the $\gva_1$-action, but a $\gva_1$-equivariant section, we construct,  lacks the desired $\gva_2$-invariance property. This makes constructing a $\gva_1\times\gva_2$-equivariant section  more challenging, but still feasible due to the following lemma and  proposition.
\begin{lemma}\label{lem-twist}
Let two groups $\gva_1$ and $\gva_2$ act on a set $\zva$.  Assume the actions of $\gva_1$ and $\gva_2$ commute. Let  $\rho_2\colon\zva\to\gva_2$ be an equivariant section and  $\pi_2\colon \zva\to \zva$ be  the corresponding canonical map.  Assume  further that the map $\rho_2$ is $\gva_1$-invariant. Then:
\begin{enumerate}
\item The canonical map $\pi_2$ is $\gva_1$-equivariant.
\item  \label{cs2-inv} The image $\cs_2$ of $\pi_2$ is a $\gva_1$-invariant subset of $\zva$ is the sense that $\g_1\cdot \bpz\in\cs_2$ for any $\bpz\in\cs_2$ and $\g_1\in \gva_1$. 
\item \label{p-twist} If $\rho_1\colon \zva\to \gva_1$ is a $\gva_1$-equivariant section, then $\tilde\rho_1=\rho_1\circ\pi_2$ is also a $\gva_1$-equivariant section,  invariant under $\gva_2$-action.
\end{enumerate}
\begin{proof}
\begin{enumerate}
\item  To show $\gva_1$-equivariance of $\pi_2$,  observe that for  all $\g_1\in \gva_1$  and $\pz\in \zva$:
\begin{align*} \pi_2(\g_1\cdot\pz)&=\rho_2(\g_1\cdot\pz)^{-1}\cdot (\g_1\cdot\pz)=\rho_2(\pz)^{-1}\cdot (\g_1\cdot\pz)=\g_1\cdot (\rho_2(\pz)^{-1}\cdot\pz)=\g_1\cdot \pi_2(\pz),
\end{align*}
where, in the second equality, we used $\gva_1$-invariance of $\rho_2$ and, in the third equality, we used the commutativity of  $\gva_1$- and $\gva_2$-actions.
\item Since $\pi_2$ is a projection, then $\bpz=\pi_2(\bpz)$ for any $\bpz\in \cs_2$. Hence, for any $\g_1\in \gva_1$:
$$ \g_1\cdot\bpz= \g_1\cdot\pi_2(\bpz)= \pi_2(\g_1\cdot\bpz)\in\cs_2,$$
where the last quality is due to $\gva_1$-equivariance of $\pi_2$ proven above.
\item To prove that  $\tilde\rho_1$  is a $\gva_1$-equivariant section, observe that for all $\g_1\in\gva_1$ and $\pz\in \zva$ due to  $\gva_1$-equivariance of both $\rho_1$ and $\pi_2$ we have:
$$\tilde\rho_1(\g_1\cdot\pz)=\rho_1(\pi_2(\g_1\cdot \pz))=\rho_1(\g_1\cdot \pi_2( \pz))=\g_1\cdot\rho_1( \pi_2( \pz))=\g_1\cdot\tilde\rho_1(\pz).$$
Invariance of $\tilde\rho_1$ under the $\gva_2$-action follows from $\gva_2$-invariance of $\pi_2$.
\end{enumerate}
\end{proof}

\end{lemma}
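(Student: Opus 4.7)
The plan is to unpack each of the three claims using the defining identity $\pi_2(\pz) = \rho_2(\pz)^{-1} \cdot \pz$ from Definition~\ref{def-sect}, combined with the two hypotheses in play: the $\gva_1$-invariance of $\rho_2$ and the commutativity of the $\gva_1$- and $\gva_2$-actions on $\zva$. The only additional tools needed are the general properties of a canonical map collected in Lemma~\ref{lem-pi}, namely that $\pi_2$ is $\gva_2$-invariant, is a projection onto $\cs_2$, and that $\rho_2$ takes value $e$ exactly on $\cs_2$. No extra structure has to be built; the whole statement is essentially a bookkeeping consequence of the setup.

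For part (1), I would compute $\pi_2(\g_1\cdot\pz)$ directly from the definition. The $\gva_1$-invariance of $\rho_2$ allows the replacement $\rho_2(\g_1\cdot\pz) = \rho_2(\pz)$, yielding $\pi_2(\g_1\cdot\pz) = \rho_2(\pz)^{-1} \cdot (\g_1\cdot \pz)$. The commutativity of the two actions then lets me swap $\rho_2(\pz)^{-1} \in \gva_2$ past the $\gva_1$-element $\g_1$ to reach $\g_1\cdot(\rho_2(\pz)^{-1}\cdot\pz) = \g_1\cdot\pi_2(\pz)$. For part (2), given any $\bpz\in\cs_2$ and $\g_1\in\gva_1$, the projection property $\pi_2(\bpz)=\bpz$ together with part (1) gives $\g_1\cdot\bpz = \g_1\cdot\pi_2(\bpz) = \pi_2(\g_1\cdot\bpz) \in \cs_2$, so $\cs_2$ is stable under the $\gva_1$-action.

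Part (3) is then an immediate composition: $\gva_1$-equivariance of $\tilde\rho_1 = \rho_1\circ\pi_2$ chains the $\gva_1$-equivariance of $\pi_2$ from part (1) with the $\gva_1$-equivariance of $\rho_1$, and $\gva_2$-invariance of $\tilde\rho_1$ follows from the $\gva_2$-invariance of $\pi_2$ supplied by Lemma~\ref{lem-pi}. I don't expect any real obstacle; the only place where care is required is part (1), where the correct roles of the two hypotheses must be kept straight — the $\gva_1$-invariance of $\rho_2$ is precisely what decouples $\rho_2$ from the $\g_1$-translate, and commutativity is precisely what lets the leftover $\gva_2$-factor be pushed past the $\g_1$.
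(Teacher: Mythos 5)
Your proposal is correct and matches the paper's argument step for step: part (1) uses the defining identity for $\pi_2$ together with the $\gva_1$-invariance of $\rho_2$ and the commutativity of the two actions, part (2) combines the projection property with part (1), and part (3) chains the equivariance of $\rho_1$ and $\pi_2$ and inherits $\gva_2$-invariance from $\pi_2$. No differences worth noting.
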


\begin{proposition}[direct product]\label{prop-G1G2} 
Under the assumptions and with the notations of Lemma~\ref{lem-direct1}, let  $\cs_1$ and $\cs_2$  be the images of $\pi_1$ and $\pi_2$, respectively. Assume  further that the map $\rho_2$ is $\gva_1$-invariant.
%
%
 Then:
\begin{enumerate}
\item\label{ind-rho} The map $\rho\colon\zva\to \gva $ defined by 
$$ \rho(\pz)=\big(\rho_1(\pi_2(\pz)), \rho_2(\pz)\big)$$ a $\gva$-equivariant section. 
\item  \label{ind-pi} If $\pi$ is  $\rho$-canonical map and $\cs$ is the image of $\pi$, then\footnote{Warning:  maps $\pi_1$ and $\pi_2$, in general, do not commute, as it can been seen in Theorem~\ref{thm-rho12}  below. See also Lemma~\ref{lem-direct2}.}
$$\pi=\pi_1\circ\pi_2 \quad{and}\qquad \cs=\cs_1\cap\cs_2.$$ 
\end{enumerate}
\end{proposition}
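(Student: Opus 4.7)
The plan is to leverage Lemma~\ref{lem-direct1} for part~\ref{ind-rho}, Lemma~\ref{lem-twist} to supply the missing invariance hypothesis, and then a direct unwinding of the definition of $\pi$ for part~\ref{ind-pi}.

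\textbf{Part~\ref{ind-rho}.} I define $\tilde\rho_1=\rho_1\circ\pi_2$ and claim that $\rho=(\tilde\rho_1,\rho_2)$ satisfies the hypotheses of Lemma~\ref{lem-direct1}. By Lemma~\ref{lem-twist}(\ref{p-twist}), the map $\tilde\rho_1$ is a $\gva_1$-equivariant section that is $\gva_2$-invariant. The map $\rho_2$ is a $\gva_2$-equivariant section and is $\gva_1$-invariant by hypothesis. Therefore the criterion of Lemma~\ref{lem-direct1} applies, yielding that $\rho$ is a $\gva$-equivariant section.

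\textbf{Part~\ref{ind-pi}, the formula for $\pi$.} I compute directly from \eqref{pi}, using that the $\gva_1$- and $\gva_2$-actions commute:
\[
\pi(\pz)=\rho(\pz)^{-1}\cdot\pz=\bigl(\tilde\rho_1(\pz)^{-1},\,\rho_2(\pz)^{-1}\bigr)\cdot\pz
=\tilde\rho_1(\pz)^{-1}\cdot\bigl(\rho_2(\pz)^{-1}\cdot\pz\bigr)
=\rho_1(\pi_2(\pz))^{-1}\cdot\pi_2(\pz)=\pi_1(\pi_2(\pz)).
\]

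\textbf{Part~\ref{ind-pi}, the identity $\cs=\cs_1\cap\cs_2$.} Since $\cs$ is the image of $\pi=\pi_1\circ\pi_2$, it equals $\pi_1(\cs_2)$. For the inclusion $\pi_1(\cs_2)\subseteq \cs_1\cap\cs_2$: any point of $\pi_1(\cs_2)$ lies in $\cs_1$ by definition, and by Lemma~\ref{lem-twist}(\ref{cs2-inv}) the set $\cs_2$ is $\gva_1$-invariant, so $\pi_1$ keeps it inside $\cs_2$ (because $\pi_1(\bpz)=\rho_1(\bpz)^{-1}\cdot\bpz$ differs from $\bpz$ by a $\gva_1$-action). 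For the reverse inclusion, take $\bpz\in\cs_1\cap\cs_2$. Since $\pi_1$ is a projection onto $\cs_1$ by Lemma~\ref{lem-pi}, we have $\pi_1(\bpz)=\bpz$, and therefore $\bpz=\pi_1(\bpz)\in\pi_1(\cs_2)=\cs$.

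The routine work is all in the definitional unwinding; the one place care is needed is remembering that $\tilde\rho_1=\rho_1\circ\pi_2$ rather than $\rho_1$ itself is the correct first factor of $\rho$, and that this substitution is exactly what Lemma~\ref{lem-twist}(\ref{p-twist}) was designed to supply. I do not expect a genuine obstacle beyond this observation, since the commutativity of the two actions makes the computation of $\pi$ transparent and the set-theoretic identity for $\cs$ reduces to the invariance of $\cs_2$ under $\gva_1$ plus the projection property of $\pi_1$.
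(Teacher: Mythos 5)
Your proof is correct and follows essentially the same route as the paper's: Lemma~\ref{lem-twist}(\ref{p-twist}) combined with Lemma~\ref{lem-direct1} for part~\ref{ind-rho}, the same definitional unwinding for $\pi=\pi_1\circ\pi_2$, and the same use of the $\gva_1$-invariance of $\cs_2$ together with the projection property of $\pi_1$ for $\cs=\cs_1\cap\cs_2$. No gaps.
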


\newpage
\begin{proof}
\begin{enumerate}

\item  By Part~\ref{p-twist} of Lemma~\ref{lem-twist}, $\tilde\rho_1=\rho_1\circ\pi_2$ is  a $\gva_1$-equivariant section, which is, at the same time, is   a $\gva_2$-invariant map.
Then $\rho(\pz)=\big(\tilde\rho_1,\rho_2\big)$ is a $\gva$-section by  Lemma~\ref{lem-direct1}.
\item  We unwind the definitions:
\begin{align*} &\pi(\pz)=\rho(\pz)^{-1}\cdot \pz=\big(\rho_1(\pi_2(\pz)), \rho_2(\pz)\big)^{-1}\cdot\pz=\rho_1(\pi_2(\pz))^{-1} \cdot\big(\rho_2(\pz)^{-1}\cdot \pz\big)\\
&=\rho_1(\pi_2(\pz))^{-1} \cdot \pi_2(\pz)=\pi_1(\pi_2(\pz)).
\end{align*}
From the above formula, it  immediately  follows that $\cs=Im(\pi)\subset Im(\pi_1)=\cs_1.$ We also note that since $\pi_2(\pz)\in\cs_2$ and $\cs_2$ is a $\gva_1$-invariant subset of $\zva$, as proven in Part~\ref{cs2-inv} of Lemma~\ref{lem-twist}, then 
$$\pi(\pz)=\pi_1(\pi_2(\pz))=\rho_1(\pi_2(\pz))^{-1} \cdot \pi_2(\pz)\in\cs_2.$$
Thus, we have shown that $\cs\subset\cs_1\cap\cs_2$.
Conversely,  for any  $\pz\in\cs_1\cap\cs_2$, since $\pi_1$ and~$\pi_2$ are projections on $\cs_1$ and $\cs_2$, respectively,  we have $\pz=\pi_1(\pz)=\pi_2(\pz)$. Therefore, $\pi(\pz)=\pi_1(\pi_2(\pz))=\pz$, and so $\pz\in Im(\pi)=\cs$, proving the reverse inclusion  $\cs\supset\cs_1\cap\cs_2$. We conclude that  $\cs=\cs_1\cap\cs_2$.
\end{enumerate}
\end{proof}

\subsection{\rm{\bf {EMC}} from a matrix-completion  (\rm{\bf{MC}})}\label{subs-eqmc}
In this section, we apply the general theory developed in Section~\ref{subs-equiv} to turn an arbitrary matrix-completion map into an $\slnk\times \K$-equivariant matrix-completion map, where the $\slnk\times \K$-action is defined by \eq{Ls-act}. 

 \begin{theorem}[equivariant matrix-completion]\label{thm-eq-mc}  Let 
\begin{itemize}
\item $\mc\colon \rv\to  \K[t]^{n\times n}$  be a matrix-completion map;
\item   $\rho\colon\rv \to \gva=\slnk\times \K$ be an equivariant section for the   $\slnk\times\K$-action on $\rv$ as defined by  \eq{Ls-act};
\item  $\pi\colon\rv\to\rv$ be the corresponding canonical map with the image $\cs$.
\end{itemize}
 Then the map $\emc\colon \rv\rightarrow \K[t]^{n\times n}$ 
defined by 
\beq\label{eq-mc}\emc(\vv)=\rho(\vv)\cdot \mc\left(\rho(\vv)^{-1}\cdot \vv\right)\eeq
 is  the unique $\slnk\times \K$-equivariant matrix-completion map such that $\emc(\bar \vv)=\mc(\bar \vv)$ for all $\bar \vv\in\cs$.
\end{theorem}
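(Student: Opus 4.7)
The plan is to recognize $\emc$ as a direct instance of the general equivariantization construction of Proposition~\ref{prop-eqv-map}, and then separately verify that the matrix-completion properties of $\mc$ are inherited pointwise via Lemma~\ref{lem-mc}.

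First, I would rewrite the defining formula \eqref{eq-mc} in terms of the canonical map. By Definition~\ref{def-sect}, $\pi(\vv)=\rho(\vv)^{-1}\cdot \vv$, so
$$
\emc(\vv)=\rho(\vv)\cdot \mc\bigl(\pi(\vv)\bigr),
$$
which is exactly the formula $\tilde f(\pz)=\rho(\pz)\cdot f(\pi(\pz))$ appearing in Proposition~\ref{prop-eqv-map}, applied with $f=\mc$, $\zva=\rv$, $\yva=\K[t]^{n\times n}$, and $\gva=\slnk\times \K$ acting on both sets through \eqref{Ls-act}. Invoking that proposition immediately yields $\slnk\times \K$-equivariance of $\emc$ and uniqueness among all $\slnk\times \K$-equivariant maps $\rv\to \K[t]^{n\times n}$ whose restriction to $\cs$ coincides with $\mc$. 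The agreement $\emc|_\cs=\mc|_\cs$ itself drops out for free, since $\rho(\bar\vv)=e$ whenever $\bar\vv\in\cs$ by Lemma~\ref{lem-pi}; the uniqueness claim of the theorem (restricted to equivariant matrix-completion maps agreeing with $\mc$ on $\cs$) is then a special case of this broader uniqueness.

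What remains is to verify that $\emc$ is indeed a matrix-completion map in the sense of Definition~\ref{def-mc}. This is essentially already established by Lemma~\ref{lem-mc}, but applied pointwise rather than for a single fixed pair $(L,s)$: for each $\vv\in \rv$, set $(L,s):=\rho(\vv)\in \slnk\times \K$; then
$$
\emc(\vv)=(L,s)\cdot \mc\bigl((L,s)^{-1}\cdot \vv\bigr),
$$
and the computations in the proof of Lemma~\ref{lem-mc} show that its first column equals $\vv$ and its determinant equals $1$. Since these conclusions hold for every $\vv\in \rv$, $\emc$ is a matrix-completion map.

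I do not anticipate a serious obstacle here; the theorem is effectively a repackaging of the equivariantization construction, and the matrix-completion conditions survive because the $\slnk\times \K$-action \eqref{Ls-act} respects both ``being the first column'' and ``having determinant $1$''. The only point requiring a little care is keeping the two ingredients conceptually separate: Proposition~\ref{prop-eqv-map} supplies equivariance and uniqueness \emph{as an equivariant map}, while Lemma~\ref{lem-mc} supplies the matrix-completion properties \emph{pointwise}; combining them gives precisely the uniqueness statement formulated in the theorem.
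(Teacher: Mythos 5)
Your proposal is correct and follows essentially the same route as the paper: the paper likewise obtains equivariance and uniqueness by direct appeal to Proposition~\ref{prop-eqv-map}, and establishes the matrix-completion property by rerunning the computation of Lemma~\ref{lem-mc} with $(L,s)$ replaced by $\rho(\vv)=(L(\vv),s(\vv))$. Your observation that the pointwise application is legitimate because the verification in Lemma~\ref{lem-mc} is carried out for each fixed $\vv$ separately is exactly the point the paper relies on.
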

\begin{proof} The fact that the map  $\emc\colon \rv\rightarrow \K[t]^{n\times n}$ is  the unique $\slnk\times \K$-equivariant \emph{map} such that $\emc(\bar \vv)=\mc(\bar \vv)$ for all $\bar \vv\in\cs$ follows from Proposition~\ref{prop-eqv-map}. To show that it is a \emph{matrix completion} map we can use the same argument as in Lemma~\ref{lem-mc} with $L$ replaced by $L(\vv)$ and $s$ replaced with $s(\vv)$, where $\rho(\vv)=\left(L(\vv),s(\vv)\right)$.
\end{proof}

Now the problem boils down to an explicit  construction of  an equivariant section $$\rho\colon\rv\to \slnk\times\K.$$  
To  take advantage of Proposition~\ref{prop-G1G2} with $\gva_1=\slnk$ and $\gva_2=\K$ actions on $\zva=\rv$ given by \eq{L-act} and \eq{s-act}, respectively, we need to construct maps $\rho_1\colon\rv\to \slnk$  and $\rho_2\colon\rv\to\K$, such that:
\begin{align}
\label{eqv-rho1} \underset{\vv\in\rv}{\forall}\ \underset{\l\in SL_{n}\left(\K\right) }{\forall}\rho_1\left( \l\cdot\vv\right)  &=\l\,\rho_1\left(  \vv\right),\\
\label{eqv-rho2} \underset{\vv\in\rv}{\forall}\ \underset{s\in \K }{\forall}\rho_2\left( s\cdot\vv\right)  &=s+\rho_2\left(  \vv\right),\\
\label{inv-rho2}\underset{\vv\in\rv}{\forall} \underset{\l\in \slnk}{\forall}\rho_2\left(\l\cdot\vv\right)&=\rho_2(\vv).
\end{align} 
Moreover,  to insure that we stay within the field $\K$, we will seek to construct $\rho_1$ and $\rho_2$  which are  (piecewise) rational function in  the coefficients of  the polynomial vectors in $\rv$. We achieve this in the following proposition. An interesting twist is that we use  map $\rho_1$ to construct map~$\rho_2$.

\begin{theorem}[equivariant sections]\label{thm-rho12}
Let $\vv$ be a regular polynomial vector of degree $d$: 
\beq\label{v}
\vv=\underset{\cv}{\underbrace{\left[
\begin{array}
[c]{ccc}%
v_{10} & \dots & v_{1d}\\
\vdots & \vdots & \vdots\\
v_{n0} & \dots & v_{nd}
\end{array}
\right]  }}\left[
\begin{array}
[c]{c}%
t^{0}\\
\vdots\\
t^{d}%
\end{array}
\right],
\eeq
where $V$,  is the coefficient matrix of $\vv$.
Let %
\beq\label{bQ}
\overline{\cv}=\left[
\begin{array}
[c]{cccc}%
v_{1i_{1}} & \dots & v_{1i_{n-1}} & v_{1i_{n}}\\
\vdots & \vdots & \vdots & \vdots\\
v_{ni_{1}} & \dots & v_{ni_{n-1}} & v_{ni_{n}}
\end{array}
\right] 
\eeq
be the most right submatrix of $\cv$ of rank $n$. (We note that $i_n=d$.)
Let $k$ be the index of the right most column of $\cv$
not included in $\overline{\cv}$, or  equivalently
\beq \label{eq-kv}
k=\min_{j=1,\dots,n}\{i_{j}|i_{j}=d-n+j\}-1.
\eeq
\begin{enumerate} 
\item 
 Define $\rho_1(\vv)$ to be the matrix obtained from
$\overline{\cv}$ by dividing each element of its last column by $|\overline\cv|$.  Then the map   $\rho_1\colon
\rv\rightarrow SL_{n}(\K)$ is 
$\slnk$-equivariant section as defined by \eq{eqv-rho1}.
\item Define  $\rho_2(\vv)$ to be the number obtained by  
\beq\label{eq-rho2}
\rho_2(\vv)=\frac{\;\;\;\;\;\;{\rm coeff}(\bv_{n-(d-k-1)},t,k)}{(k+1)\;{\rm coeff}(\bv_{n-(d-k-1)},t,k+1)},
\eeq
where the polynomial vector  $\bv=\rho_1(\vv)^{-1}\cdot  \vv$.
Then the map $\rho_2\colon \rv\rightarrow \K$ is both $\K$-equivariant as defined by \eq{eqv-rho2} and $\slnk$-invariant as defined by \eq{inv-rho2}.
\item Let $\pi_1,\pi_2\colon\rv\to\rv$ be the two maps defined by  $\pi_i(\vv)=\rho_i(\vv)^{-1}\cdot\vv$. Then  the map $\rho\colon \rv\to \slnk\times\K$ defined by
              \beq\label{eq-product-rho}\rho(\vv)=\big(\rho_1(\pi_2(\vv)),\rho_2(\vv)\big)\eeq
              is an   $\slnk\times\K$-equivariant section  and  the coefficient matrix of the canonical form $\pi(\vv)=\rho(\vv)^{-1}\cdot\vv$ has the following shape:
\beq\label{canonical}\text{\rm
\renewcommand{\arraystretch}{1.3} 
\begin{tabular}{|cccccccccc|}
\cline{1-10}
$\ast$ &\multicolumn{1}{c|}{1} & & & & & & & &   \\
\cline{3-4}
$\ast$ &0 & $\ast$ &\multicolumn{1}{c|}{1}  & & & & & &    \\
$\vdots$&$\vdots$&$\vdots$ &$\vdots$ &  $\ddots$ & & & & &   \\
\cline{6-7}
$\ast$& 0 & $\ast$ & 0 & $\cdots$  & $0$ & \multicolumn{1}{c|}{1}  &  & & \\
\cline{8-8}
$\ast$ & 0  & $\ast$ & 0 & $\cdots$ &  $*$ & 0 &  \multicolumn{1}{c|}{1} & & \\ 
$\vdots$ & $\vdots$ & $\vdots$ & $\vdots$&   &  $\vdots$ & $\vdots$ & $\vdots$ &  $\ddots$  & \\ 
\cline{10-10}
$\ast$ & 0 & $\ast$ & 0 & $\cdots$ &  $*$ & 0& 0  & $\cdots$ &  $|\bar{V}|$  \\
\cline{1-10}
\multicolumn{1}{}{}& $i_1$ & & $i_2$  &  &   $k$  & $k+1$ & $k+2$ &  &   \multicolumn{1}{r}{$d$}
\end{tabular}}
\eeq

Columns are numbered from~$0$ to $d$ (left-to-right). Rows are numbered from~$1$ to $n$ (top-to-down). 
Steps are numbered from~$1$ to~$n$ (left-to-right).
The depth of $j$-th step is $i_j-i_{j-1}$ where $i_0=-1$.
For $j=1,\dots,n-1$, the $1$ in the $j$-th row is located in the $i_j$-th column.
The $k$-th column is located in the $(n-(d-k-1))$-th step. Arbitrary entries are denoted by~*. Their expressions in terms of   the coefficients of $\vv$ are invariant under the~$\slnk\times\K$-action.
\end{enumerate}
\end{theorem}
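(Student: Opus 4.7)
The proof splits naturally along the three parts of the theorem.

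For Part~1, direct calculation suffices: if $L\in SL_n(\K)$, then the coefficient matrix of $L\cdot\vv$ is $LV$, and since $L$ is invertible the column indices $i_1,\dots,i_n$ selecting the rightmost rank-$n$ submatrix are unchanged, so $\overline{LV}=L\bar V$ and $|\overline{LV}|=|L|\,|\bar V|=|\bar V|$. Dividing the last column of $L\bar V$ by $|\bar V|$ yields precisely $L\,\rho_1(\vv)$, establishing the $SL_n(\K)$-equivariance.

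For Part~2, the $SL_n$-invariance of $\rho_2$ is immediate from Part~1: $\bv=\rho_1(\vv)^{-1}\vv$ is itself $SL_n$-invariant because $\rho_1(L\vv)^{-1}(L\vv)=\rho_1(\vv)^{-1}L^{-1}L\vv=\bv$, and $\rho_2$ is built from coefficients of $\bv$. The core work is the identity $\rho_2(s\cdot\vv)=\rho_2(\vv)+s$. First I would analyze how $V$ transforms under a parameter shift: $V' = V M(s)$ where $M(s)$ is the $(d+1)\times(d+1)$ unit lower-triangular change-of-basis matrix for $t\mapsto t+s$. The crucial structural claim is that the indices $i_1,\dots,i_n$ are preserved under the shift and $\bar V' = \bar V R(s)$ for some unit lower-triangular $R(s)\in\K^{n\times n}$. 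This identity rests on the greedy characterization of $\bar V$: each column $V_{*m}$ with $m\notin\{i_j\}$ lies in the span of $\{\bar V_{*l}:i_l>m\}$, which after substitution restricts the mixing in $V'_{*i_j}$ to only involve $\bar V_{*l}$ with $l\ge j$. Consequently $|\bar V'|=|\bar V|$ and $\rho_1(s\cdot\vv)=\rho_1(\vv)\tilde R(s)$ for a unit lower-triangular $\tilde R(s)$ with last column $e_n$. Then $\bv'(t)=\tilde R(s)^{-1}\bv(t+s)$, and because $\tilde R(s)^{-1}$ is unit lower triangular its $j_0$-th row contains no entries beyond column $j_0$, giving $\bv'_{j_0}(t)=\bv_{j_0}(t+s)+\sum_{l<j_0} A_{j_0,l}\,\bv_l(t+s)$. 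Minimality of $j_0$ in \eqref{eq-kv} forces $i_{j_0-1}\le k-1$, so $\deg\bv_l\le k-1$ for $l<j_0$; hence the coefficient of $t^k$ in $\bv'_{j_0}$ equals the coefficient of $t^k$ in $\bv_{j_0}(t+s)$, which by Taylor expansion is $c_k+(k+1)s$ with $c_{k+1}=1$. The identity for $\rho_2$ follows.

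For Part~3, combining Parts~1 and~2 lets me invoke Proposition~\ref{prop-G1G2} with $\gva_1=SL_n(\K)$ and $\gva_2=\K$, yielding that $\rho(\vv)=(\rho_1(\pi_2(\vv)),\rho_2(\vv))$ is an $SL_n(\K)\times\K$-equivariant section. For the canonical form~\eqref{canonical}, the direct computations $\rho_1(\hat\vv)^{-1}\bar V_{*j}=e_j$ for $j<n$ and $\rho_1(\hat\vv)^{-1}\bar V_{*n}=|\bar V|\,e_n$ (valid for any $\hat\vv\in\rv$) establish the main staircase, which when applied to $\hat\vv=\pi_2(\vv)$ transfers to $\pi(\vv)$ because the shift preserves $|\bar V|$ by Part~2. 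The extra zero at position $(j_0,k)$ arises because Lemma~\ref{lem-pi} combined with the $\K$-equivariance of $\rho_2$ gives $\rho_2(\pi_2(\vv))=0$, which unravels through the defining formula \eqref{eq-rho2} to the vanishing of the $t^k$-coefficient of the $j_0$-th component of $\pi(\vv)$. Finally, all unspecified entries are $SL_n(\K)\times\K$-invariant because $\pi$ itself is invariant by Lemma~\ref{lem-pi}, so each entry is an invariant function of the entries of $\vv$.

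The main obstacle is the structural identity $\bar V' = \bar V R(s)$ with $R(s)$ unit lower triangular that underlies the $\K$-equivariance of $\rho_2$; this single identity simultaneously provides $|\bar V'|=|\bar V|$ and restricts the mixing of components of $\bv'$ so that the Taylor-coefficient computation collapses to one term. It hinges on the compatibility between the greedy rightmost-rank-$n$ selection of columns and the unit lower-triangular action of $M(s)$ on column indices.
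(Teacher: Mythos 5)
Your proposal is correct and follows the same overall architecture as the paper's proof: Part~1 by direct computation on the coefficient matrix, and Part~3 by invoking Proposition~\ref{prop-G1G2} together with the observation that $\rho_2(\pi_2(\vv))=0$ forces the $(n-(d-k-1),k)$ entry of the canonical coefficient matrix to vanish. The one place where you genuinely diverge is the $\K$-equivariance of $\rho_2$: the paper first reduces via the identity $\pi_1(s\cdot\vv)=\pi_1(s\cdot\pi_1(\vv))$ and then reads everything off the staircase form \eqref{vbar} of $\pi_1(\vv)$, whereas you prove the structural identity $\overline\cv(s\cdot\vv)=\overline\cv(\vv)\,R(s)$ with $R(s)$ unit lower triangular directly on the original coefficient matrix, using the greedy (rightmost-basis) characterization of the selected columns. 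Both routes rest on the same fact --- that the shift relates $\rho_1(s\cdot\vv)$ to $\rho_1(\vv)$ by a unit lower-triangular factor with last column $e_n$ --- and your version has the merit of making explicit a point the paper leaves implicit, namely that the pivot indices $i_1,\dots,i_n$ are unchanged by a parameter shift. One trivial omission: in the boundary case $k=d-1$ (so $n-(d-k-1)=n$) the relevant leading coefficient is $|\overline\cv|$ rather than $1$; since \eqref{eq-rho2} divides by ${\rm coeff}(\bv_{n},t,d)$ the Taylor computation is unchanged, but you should record both cases as the paper does in \eqref{rho2v}.
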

\begin{proof} Throughout the proof, since the matrix $\cv$, its submatrix $\overline \cv$, and the integer $k$ depend on the input vector $\vv\in \rv$,  we will  treat them as maps $\cv\colon \rv\to \K^{n\times (d+1)}$, $\overline{\cv}\colon \rv\to \K^{n\times n}$, and  $k\colon  \rv\to \{0,\dots, d-1\}$.
\begin{enumerate}
\item We observe that  $\rho_1$ is defined because the components of the polynomial vector $\vv$ are linearly independent.  To prove that $\rho_1$ is $\slnk$-equivariant as defined by \eq{eqv-rho1}, we note that for any 
 $\l\in \slnk $
\[
\l\cdot \vv = \underset{\l \cv}{\underbrace{\left[
\begin{array}
[c]{ccc}%
\l V_{*0} & \dots & \l V_{*d}\\
&  &
\end{array}
\right]  }}\left[
\begin{array}
[c]{c}%
t^{0}\\
\vdots\\
t^{d}%
\end{array}
\right]  ,
\]
where $V_{*i}$ denote the columns of matrix $\cv$ indexed by $i\in\{0,\dots,d\}$.
Then since $|\l|=1$:   
\begin{enumerate}
\item $\overline \cv(\l\cdot \vv)=\l \overline  \cv(\vv)$;

\item $k(\vv)=k(\l\cdot \vv)$;

\item $|\overline  \cv(\l\cdot \vv)|=|\overline \cv(\vv)|$.
\end{enumerate}
Therefore, $\rho_1(\l\cdot \vv)=\l\,\rho_1(\vv)$.
\item 
We observe that  $\rho_2$ is defined because $\deg\vv\geq n$, and so there exists  at least one column of $\cv$ not included in $\overline  \cv$.  We also note that, by construction,  $\rho_2(\vv)$  is completely determined by $\pi_1(\vv)= \rho_1(\vv)^{-1}\cdot\vv$.
Therefore,
\beq\label{rho2-pi1} \rho_2(\vv)=\rho_2(\pi_1(\vv)).\eeq
\begin{enumerate}
\item The  $\slnk$-invariance of $\rho_2$, described by \eq{eqv-rho2}, follows immediately from \eq{rho2-pi1} and the $\slnk$-invariance of $\pi_1$:
$$\rho_2\left(\l\cdot \vv\right) =\rho_2\left(\pi_1(\l\cdot \vv)\right)=\rho_2(\pi_1(\vv))= \rho_2( \vv).$$

\item Due to \eq{rho2-pi1}, to prove  $\K$-equivariance of $\rho_2$,  described by \eq{inv-rho2},  understanding the structure of $\pi_1(s\cdot \vv)$ is crucial,  and the following identity helps us to do so: 
\begin{align}\label{pi1sv} \pi_1(s\cdot \vv)=\pi_1\left(\rho_1(\vv)^{-1}\cdot \left(s\cdot \vv\right)\right)=\pi_1\left(s\cdot \left(\rho_1(\vv)^{-1}\cdot \vv\right)\right)= \pi_1(s\cdot\pi_1(\vv)),
\end{align}
where in the first equality we use   the  $\slnk$-invariance of $\pi_1$,   in the second, the commutativity of actions defined by \eq{s-act} and \eq{L-act}, while, in the last we use the definition of $\pi_1$.

By construction,
\beq \label{piv}\pi_1(\vv)=\begin{bmatrix} \bar\vv_1 \\ \vdots\\  \\ \vdots\\ \\ \vdots\\  \bar{\vv}_n\end{bmatrix}=\begin{bmatrix} 0 \\ \vdots\\0  \\t^{k+1}\\ t^{k+2}\\ \vdots\\  |\overline{\cv}| t^d\end{bmatrix}+\begin{bmatrix}0\\ \vdots\\  0 \\  b_{n-(d-k-1)} \\  b_{n-(d-k-2)} \\  \vdots\\ b_n\end{bmatrix}\,t^k+ \ww(t),\eeq
where $k$ is defined by \eqref{eq-kv}, $\overline{\cv}$ by \eq{bQ}, $b_{n-(d-k-1)},\dots, b_n\in \K$ are coefficients whose exact values in terms of the coefficients of $\vv$  are  irrelevant,  and  $\ww$ is a polynomial vector of degree less than $k$, whose explicit expression in terms of $\vv$ is also irrelevant.  Observe that the coefficient matrix of $\bar\vv=\pi_1(\vv)$ has a staircase shape as shown below.

\beq\label{vbar}\cv(\bar \vv)=\text{
\renewcommand{\arraystretch}{1.3} 
\begin{tabular}{|cccccccccc|}
\cline{1-10}
$\ast$ &\multicolumn{1}{c|}{1} & & & & & & & &   \\
\cline{3-4}
$\ast$ &0 & $\ast$ &\multicolumn{1}{c|}{1}  & & & & & &    \\
$\vdots$&$\vdots$&$\vdots$ &$\vdots$ &  $\ddots$ & & & & &   \\
\cline{6-7}
$\ast$& 0 & $\ast$ & 0 & $\cdots$  & $b_{n-(d-k-1)}$ & \multicolumn{1}{c|}{1}  &  & & \\
\cline{8-8}
$\ast$ & 0  & $\ast$ & 0 & $\cdots$ &  $b_{n-(d-k-2)}$ & 0 &  \multicolumn{1}{c|}{1} & & \\ 
$\vdots$ & $\vdots$ & $\vdots$ & $\vdots$&   &  $\vdots$ & $\vdots$ & $\vdots$ &  $\ddots$  & \\ 
\cline{10-10}
$\ast$ & 0 & $\ast$ & 0 & $\cdots$ &  $b_n$ & 0& 0  & $\cdots$ &  $|\bar{V}|$  \\
\cline{1-10}
\multicolumn{1}{}{}& $i_1$ & & $i_2$  &  &   $k$  & $k+1$ & $k+2$ &  &   \multicolumn{1}{r}{$d$}
\end{tabular}}
\eeq

Therefore,
\beq\label{rho2v}\rho_2(\vv)=\rho_2(\pi_1(\vv))=\begin{cases} \frac{b_{n-(d-k-1)}}{k+1} & \text{ if } k<d-1,\\ \frac{b_n}{|\overline{\cv}| d}& \text{ if }  k=d-1.\end{cases}\eeq
 As a side remark, we observe  that, in contrast with matrix \eq{canonical}, the entry~$b_{n-(d-k-1)}$  in the  $(n-(d-k-1))\times(n-(d-k-1))$-th position  of matrix \eq{vbar} does not have to be zero. When it is, then $\rho_2(\vv)=0$ and \eq{vbar} is the matrix of the canonical form~$\pi(\vv)$.

\medskip

The polynomial vector $s\cdot\pi_1(\vv)$, obtained by replacing $t$ with $t+s$ in \eq{piv}, is
%
\beq \label{spi1v}s\cdot\pi_1(\vv)=\begin{bmatrix} 0 \\ \vdots\\0  \\t^{k+1}\\ t^{k+2}\\ \vdots\\  |\overline{\cv}| t^d\end{bmatrix}+\begin{bmatrix}0\\ \vdots\\0 \\ b_{n-(d-k-1)}+(k+1)\,s  \\  p_1(t) \\  \vdots\\ p_{d-k-1}(t)\end{bmatrix}\,t^k+ \hat \ww(t),\eeq
where $p_i(t)$, $i=1,\dots, (d-k-1)$, are polynomials of degree $i$ and $\hat \ww$ is an updated polynomial vector of degree less than $k$, whose coefficients can be expressed in terms coefficients of  $\vv$ and $s$. 
 The coefficient matrix of $s\cdot\pi_1(\vv)$ is obtained by replacing zeros, located below  1s,  by some, in general non-zero, numbers  in the staircase matrix shown in \eq{vbar}. 
We observe that $\rho_1(s\cdot\pi_1(\vv))$ is a lower triangular matrix with 1s on the diagonal and so is its inverse. Therefore,
\beq \label{pi1spi1v}\pi_1(s\cdot\pi_1(\vv))=\rho_1(s\cdot\pi_1(\vv))^{-1}\cdot(s\cdot\pi_1(\vv))=\begin{bmatrix} 0 \\ \vdots\\0  \\t^{k+1}\\ t^{k+2}\\ \vdots\\  |\overline{\cv}| t^d\end{bmatrix}+\begin{bmatrix}0\\ \vdots\\0 \\  b_{n-(d-k-1)}+(k+1)\,s \\  \hat b_{n-(d-k-2)} \\  \vdots\\ \hat b_n\end{bmatrix}\,t^k+ \tilde \ww(t),\eeq
where $\hat b_{n-(d-k-2)},\dots, \hat b_n\in \K$ and  $\tilde \ww$ is a polynomial vector of degree less than $k$, whose explicit expression in terms of $\vv$ and $s$ are irrelevant. 

From \eq{rho2-pi1}, \eq{pi1sv}, and \eq{pi1spi1v}:
\begin{align*} \rho_2(s\cdot v) & = \rho_2(\pi_1(s\cdot v)) \\
                                & =\rho_2(\pi_1(s\cdot \pi_1(v)))\\
                                &=\begin{cases} \frac{b_{n-(d-k-1)} +(k+1)s}{k+1} & \text{ if } k<d-1\\ \frac{b_n+ds}{|\overline{\cv}| d}& \text{ if }  k=d-1\end{cases}\\
& =\begin{cases} \frac{b_{n-(d-k-1)} }{k+1} +s& \text{ if } k<d-1\\ \frac{b_n}{|\overline{\cv}| d}+s& \text{ if }  k=d-1\end{cases}=\rho_2(\vv)+s. 
\end{align*}

\end{enumerate}
\item Equivariance of the map $\rho$ follows from Proposition~\ref{prop-G1G2}, and from the same proposition we have
$$\pi(\vv)=\rho(\vv)^{-1}\cdot\vv=\pi_1(\pi_2(\vv))=\pi_1(\rho_2(\vv)^{-1}\cdot\vv)=\pi_1(\rho_2(\vv)^{-1}\cdot\pi_1(\vv)),$$
where the last equality follows from  \eq{pi1sv}  with $s=\rho_2(\vv)^{-1}$. Formula \eq{rho2v} is chosen so that  when we substitute $s=\rho_2(\vv)$ into  the entry $b_{n-(d-k-1)}+(k+1)\,s $ appearing in matrix \eq{spi1v} becomes 0.
As follows from the explanation under \eq{spi1v}, subsequent  application of $\pi_1$ will  result in the desired canonical form \eq{canonical}.  Non-constant entries of  matrix \eq{canonical} can be expressed as rational functions in the coefficients of $\vv$, and these functions are invariant with respect to  the $\slnk\times\K$-action, because the map $\pi$ is $\slnk\times\K$-invariant.

\end{enumerate}
\end{proof}
\begin{remark}[generic case]\rm
Generically,  the last $n$ columns of the coefficient matrix   $\cv$ in \eq{v} are linearly independent and so 
$[i_{1},\dots, i_{n}]=[d-(n-1),\dots,d]$, i. e. $i_{j}=d-n+j$ for $j=1,\dots{n}$. Then
\begin{itemize}

\item $\rho_1(\vv)=\left[V_{*d-n+1},\dots,\frac 1{|\overline{\cv}|} V_{*d}\right] $
 \item $k(\vv)=d-n.$
\end{itemize}

\noindent It follows that 
 $$\bar\vv=\pi_1(\vv)=\rho_1(\vv)^{-1}\cdot \vv=\begin{bmatrix} t^{d-n+1} \\  t^{d-n+2}\\ \vdots\\ |\overline{\cv}| \,t^d\end{bmatrix}+\begin{bmatrix} b_1 \\ b_2\\ \vdots\\  b_n\end{bmatrix}\,t^{d-n}+ \ww(t),$$ where $\ww$ is a polynomial vector of degree less than $d-n$ and so
 $$\rho_2(\vv)=\frac{b_1}{d-n+1}.$$
 The canonical form of $\vv$ is
 $$\pi(\vv)=\rho(\vv)^{-1}\cdot \vv=\begin{bmatrix} t^{d-n+1} \\  t^{d-n+2}\\ \vdots\\ |\overline{\cv}| \,t^d\end{bmatrix}+\begin{bmatrix} 0 \\ c_2\\ \vdots\\  c_n\end{bmatrix}\, t^{d-n}+ \hat\ww(t).$$
 \qq \end{remark}
\begin{example}[equivariant sections]\rm  To illustrate  Theorem~\ref{thm-rho12}, consider $$\vv= \left[\begin{array}{c}
                        t^{4}+t^{3}+2 t^{2}+1 
                        \\
                        2 t^{4}+t^{3}+3 t^{2}+2 
                        \\
                        3 t^{4}+t^{3}+4 t^{2}+5 t +3 
                \end{array}\right].$$
\noindent Note                
\begin{enumerate}
\item The  coefficient matrix of $\vv$ is $$\cv(\vv)=\left[\begin{array}{ccccc}
1 & 0 & 2 & 1 & 1 
\\
 2 & 0 & 3 & 1 & 2 
\\
 3 & 5 & 4 & 1 & 3 
\end{array}\right]
.$$
The rightmost full-rank submatrix $$\VV(\vv)=\left[\begin{array}{ccc}
                0 & 1 & 1 
                \\
                0 & 1 & 2 
                \\
                5 & 1 & 3 
        \end{array}\right]
$$ consists of  columns indexed by $[i_1,i_2,i_3]=[1,3,4]$ 
(where the smallest column-index of $V$ is $0$). 
Thus we have $k=2$. Since $|\VV|=5$, we have $$\rho_1(\vv)= \left[\begin{array}{ccc}
                0 & 1 & \frac{1}{5} 
                \\
                0 & 1 & \frac{2}{5} 
                \\
                5 & 1 & \frac{3}{5} 
        \end{array}\right]
        .$$
\item Observe that $$\bar \vv=\pi_1(\vv)=(\rho_1)^{-1}\vv=\begin{bmatrix}\bar\vv_1\\\bar\vv_2\\\bar\vv_3\end{bmatrix}=\left[\begin{array}{c}
                t  
                \\
                t^{3}+t^{2} 
                \\
                5 t^{4}+5 t^{2}+5 
        \end{array}\right] 
        =\begin{bmatrix}0\\t^3\\5t^4\end{bmatrix}+\begin{bmatrix}0\\1\\5\end{bmatrix}t^2+\begin{bmatrix}t\\0\\5\end{bmatrix}$$  has the form prescribed by \eq{piv}, while its coefficient matrix $$\cv(\bar \vv)=\left[\begin{array}{ccccc}
0 & 1 & 0 & 0 & 0 
\\
 0 & 0 & 1 & 1 & 0 
\\
 5 & 0 & 5 & 0 & 5 
\end{array}\right]$$
has the staircase shape described in \eqref{vbar}.  With $n=3, d=4$ and $k=2$, we have $n-(d-k-1)=2$ and so $\bv_{n-(d-k-1)}= \bv_{2}=t^3+t^2$. Thus \eqref{eq-rho2} becomes
$$\rho_2(\vv)=\frac 1 3\frac{{\rm coeff}(\bv_{2},t,2)}{{\rm coeff}(\bv_{2},t,3)}=\frac 1 3.$$ 
By looking at the entries of the matrix $\cv(\bar \vv)$, observe that this result is consistent  with~\eq{rho2v}, because we have $k<d-1$ and $b_{n-(d-k-1)}=1$. 

\item Now we have
\begin{align*}
\pi_2(\vv)= &\ \  \rho_2(\vv)^{-1}\cdot\vv \\
          = &\ \  \left[\begin{array}{c}
                        \left(t -\frac{1}{3}\right)^{4}+\left(t -\frac{1}{3}\right)^{3}+2 \left(t -\frac{1}{3}\right)^{2}+1 
                        \\
                        2 \left(t -\frac{1}{3}\right)^{4}+\left(t -\frac{1}{3}\right)^{3}+3 \left(t -\frac{1}{3}\right)^{2}+2 
                        \\
                        3 \left(t -\frac{1}{3}\right)^{4}+\left(t -\frac{1}{3}\right)^{3}+4 \left(t -\frac{1}{3}\right)^{2}+5 t +\frac{4}{3} 
                \end{array}\right]\\
           = &\ \  \left[\begin{array}{c}
                t^{4}-\frac{1}{3} t^{3}+\frac{5}{3} t^{2}-\frac{31}{27} t +\frac{97}{81} 
                \\
                2 t^{4}-\frac{5}{3} t^{3}+\frac{10}{3} t^{2}-\frac{53}{27} t +\frac{188}{81} 
                \\
                3 t^{4}-3 t^{3}+5 t^{2}+\frac{20}{9} t +\frac{16}{9} 
            \end{array}\right] \\
 \rho_1(\pi_2(\vv)) = &\ \  \left[\begin{array}{ccc}
                        -\frac{31}{27} & -\frac{1}{3} & \frac{1}{5} 
                        \\
                        -\frac{53}{27} & -\frac{5}{3} & \frac{2}{5} 
                        \\
                        \frac{20}{9} & -3 & \frac{3}{5} 
                \end{array}\right]
\end{align*}
                
Defining
$$\rho(\vv)=\big(\rho_1(\pi_2(\vv),\,\rho_2(\vv)\big)=\left(\left[\begin{array}{ccc}
                        -\frac{31}{27} & -\frac{1}{3} & \frac{1}{5} 
                        \\
                        -\frac{53}{27} & -\frac{5}{3} & \frac{2}{5} 
                        \\
                        \frac{20}{9} & -3 & \frac{3}{5} 
                \end{array}\right],\quad\frac 13 \right),$$

we observe that
$$\pi(\vv)=\rho(\vv)^{-1}\cdot \vv=\left[\begin{array}{c}
                        t -\frac{1}{3} 
                        \\
                        t^{3}-\frac{1}{27} 
                        \\
                        5 t^{4}+\frac{25}{3} t^{2}+\frac{325}{81} 
                \end{array}\right]              
                $$       
has  the coefficient  matrix
$$V(\pi(\vv))=\left[\begin{array}{ccccc}
-\frac{1}{3} & 1 & 0 & 0 & 0 
\\
 -\frac{1}{27} & 0 & 0 & 1 & 0 
\\
 \frac{325}{81} & 0 & \frac{25}{3} & 0 & 5 
\end{array}\right]
$$
in the shape prescribed by \eq{canonical}.      
\end{enumerate}
\qq \end{example}

The following example illustrates discontinuity, relative to the standard or to the Hausdorff topology, of the equivariant sections $\rho_1$ and $\rho_2$ and, therefore, of section $\rho$ defined in Theorem~\ref{thm-rho12}.
\begin{example}[discontinuity of sections]\label{ex-disc} \rm Consider a one parametric family of vectors $$\vv_c=\begin{bmatrix} t^4+5\\ t^3+7\\c\,t^2+t+4\end{bmatrix}, \quad c\in\R.$$
For $c\neq 0$, we have 
$$\rho_1(\vv_c)=\left[\begin{array}{ccc}
\frac{3}{2 c^{2}} & -\frac{2}{c} & -\frac{1}{c} 
\\
 -\frac{3}{2 c} & 1 & 0 
\\
 c  & 0 & 0 
\end{array}\right]\text{ and } \rho_2(\vv_c)=\frac 1{2\,c},
$$
while when $c=0$, we have
$$\rho_1(\vv_0)=\left[\begin{array}{ccc}
0 & 0 & -1
\\
0 & 1 & 0 
\\
1  & 0 & 0 
\end{array}\right]\text{ and } \rho_2(\vv_0)=0.
$$
\qq \end{example}

\noindent In our algorithm we will use the following corollary:

\begin{corollary}[equivariant matrix-completion]\label{cor-eq-mc}  Let $\mc\colon \rv\to  \K[t]^{n\times n} $ be a matrix-completion map,
           and the map  $\rho\colon\rv \to \slnk\times \K$ be  defined by \eq{eq-product-rho} in Theorem~\ref{thm-rho12}. 
          Then the map $\emc\colon \rv\to \K[t]^{n\times n}$ 
        defined by 
        \beq\label{eq-mcG1G2}\emc(\vv)=\rho(\vv)\cdot \mc\left(\rho(\vv)^{-1}\cdot\vv\right)\eeq
       is  an  $\slnk\times\K$-equivariant matrix-completion map.
  \end{corollary}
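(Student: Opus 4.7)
The plan is to recognize this corollary as an immediate specialization of the two preceding results, and write the proof as a two-step composition rather than a new construction. The first step is to note that Theorem~\ref{thm-rho12}, part~3, already supplies exactly what Theorem~\ref{thm-eq-mc} asks for as input: an $\slnk\times\K$-equivariant section $\rho\colon\rv\to\slnk\times\K$ for the action \eqref{Ls-act} on $\rv$. The second step is to observe that the formula \eqref{eq-mcG1G2} in the statement of the corollary is letter-for-letter the formula \eqref{eq-mc} in Theorem~\ref{thm-eq-mc}.

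Concretely, I would write: ``By Theorem~\ref{thm-rho12}~(3), the map $\rho(\vv)=\big(\rho_1(\pi_2(\vv)),\rho_2(\vv)\big)$ is an $\slnk\times\K$-equivariant section in the sense of Definition~\ref{def-sect}, with associated canonical map $\pi(\vv)=\rho(\vv)^{-1}\cdot\vv$ whose image $\cs$ consists of polynomial vectors with coefficient matrix of the shape~\eqref{canonical}. Applying Theorem~\ref{thm-eq-mc} to this $\rho$ and the given matrix-completion map $\mc$, we conclude that $\emc(\vv)=\rho(\vv)\cdot\mc\big(\rho(\vv)^{-1}\cdot\vv\big)$ is an $\slnk\times\K$-equivariant matrix-completion map; this is precisely the map defined by \eqref{eq-mcG1G2}.''

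There is no real obstacle here, because all substantive work has been carried out in Theorem~\ref{thm-rho12} (construction of the equivariant section) and Theorem~\ref{thm-eq-mc} (equivariantization of a matrix-completion map via an arbitrary equivariant section). The only thing worth double-checking in passing is that the two ingredients are being combined over a common domain, namely the set $\rv$ of regular polynomial vectors, and that $\rho$ indeed takes values in $\slnk\times\K$, both of which are built into the statements we are invoking. Accordingly, the proof will fit in a few lines and need not reproduce any computation from the two underlying theorems.
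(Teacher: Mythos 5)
Your proof is correct and follows exactly the route the paper intends: the corollary is the immediate combination of Theorem~\ref{thm-rho12}~(3), which supplies the explicit $\slnk\times\K$-equivariant section, with Theorem~\ref{thm-eq-mc}, which equivariantizes any matrix-completion map given such a section (indeed the paper leaves the proof implicit for precisely this reason). Nothing is missing.
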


\section{Minimal-degree  matrix-completion ({\rm{\bf MMC}})} \label{sect-mmc}
The matrix-completion problem is well studied. In  Zhou and Labahn \cite{zhou-labahn-2014}, a non-minimal degree solution of a more general version of this problem is tackled: given $m<n$ vectors in $\pv$ find an $n\times n$ polynomial matrix with unit determinant.  A {non-minimal degree} matrix completion can be also obtained as the matrix inverse of a minimal multiplier, presented in  Beckermann,  Labahn, and Villard \cite{beckermann-labahn-villard-2006}, or, equivalently, as the matrix inverse of a constructive  solution to the Quillen-Suslin problem, as presented, for instance,  by  Fitchas and Galligo \cite{fitchas-galligo-1990}, Logar and Sturmfels \cite{logar-sturmfels-1992}, Caniglia,  Corti\~nas, Dan\'on,  Heintz, Krick, and  Solern\'o \cite{caniglia-1993}, Park and Woodburn \cite{park-woodburn-1995}, Lombardi and Yengui \cite{lombardi-yengui-2005}, Fabia\'nska and Quadrat \cite{fabianska-quadrat-2007},  Hong,  Hough,  Kogan and Li \cite{omf-2017}, Hough \cite{hough-2018}(see also Remark~\ref{rem-qs} and Example~\ref{ex-qs} of the current paper).  Subsequently, a  degree-reduction algorithm can be applied to minimize the degree. 

In Section~\ref{subs-mmc} we propose a novel minimal-degree matrix-completion algorithm, complementing a body of literature on this topic. In Section~\ref{subs-mind}, we  prove that the degree-minimality  property  of \emph{any}  matrix-completion algorithm  is preserved under equivariantization.
\subsection{{\rm{\bf MMC}} from a minimal \bez vector and its $\mu$-basis}\label{subs-mmc}

In this section, we show that by adjoining $\vv\in\pv$, such that $\gcd(\vv)=1$, with a $\mu$-basis of a \bez vector $\bb$ of $\vv$, one produces a matrix completion of $\vv$ of degree equal to $\deg\vv+\deg\bb$. If $\bb$ is of minimal degree then so is the matrix completion.    
We start by giving relevant definitions.
\begin{definition} [{B\'ezout} vector]\label{def-bez}A \emph{B\'ezout vector} of a
polynomial vector $\ww\in\pv$ is a polynomial  vector $\bb\in\pv$ such that the scalar  product 
$$\big<\ww,\bb\big>=\lambda\gcd(\ww),$$
where $\lambda\neq 0\in \K$ and  $\gcd(\ww)$ assumed to be monic. If $\lambda=1$, then a \bez vector is called \emph{normalized}\footnote{Any \bez vector can be normalized by dividing it  by $\lambda$.}. If the degree of $\bb$ is minimal among the degrees of all \bez vectors, then $\bb$ is called a \emph{minimal-degree \bez vector}, or just a \emph{minimal \bez vector}. 
\end{definition}
The existence of a \bez vector is a classical result, and algorithms for computing it are abundant. For completeness, in the Appendix,  we present Theorem~\ref{thm-bez}  that reduces computing a minimal \bez vector to computing the row-echelon form of a matrix over $\K$. This theorem and the resulting algorithm first appeared in \cite{omf-2017,hough-2018}. 
\begin{example}[B\'{e}zout vectors]\label{ex-bez}\rm For the vector  $$\vv=\begin{bmatrix} t^6+1\\t^3\\t\end{bmatrix}$$ in Example~\ref{ex-mc}  each of the following vectors
\beq\label{3bez} \bb_1=\begin{bmatrix} 1\\-t^3\\0\end{bmatrix}, \quad \bb_2=\begin{bmatrix} 1\\-t^3-1\\t^2\end{bmatrix}, \quad  \quad \bb_3=\begin{bmatrix} 1+t\\-t^3-t^4\\-1\end{bmatrix}
\eeq
 is a  normalized normalized B\'{e}zout vector with
 $$\deg\bb_1=\deg\bb_2=3 \text{ and }\deg\bb_3=4.$$
It is not difficult to observe that $\bb_1$ and $\bb_2$ have the minimal degree.
\qq \end{example}
\begin{definition}[outer product] \label{def-outer} For $n>1$, given  an  $(n-1)$-tuple of  polynomial vectors $\uu_1,\dots\uu_{n-1}\in\pv$, we define their \emph{outer product}, denoted  
$$\uu_1\wedge\dots\wedge\uu_{n-1},$$ to be a vector $\hh\in \pv$, constructed by first  forming an   $n\times(n-1)$-matrix $\pU=\begin{bmatrix}\uu_1&\cdots& \uu_{n-1}\end{bmatrix}$, and then defining the $\{i=1,\dots n\}$-th component of $\hh$  to be  $(-1)^{i+1} |\pU_{\hat i}|$, where $\pU_{\hat i}$ is the  $(n-1)\times(n-1)$-submatrix obtained by removing the $i$-th row from $\pU$.
\end{definition}
\begin{definition} [$\mu$-basis]\label{def-mu} Let $\ww\in \pv$, $n>1$, be such that $\gcd(\ww)=1$.  A \emph{$\mu$-basis} of $\ww$ is an  $(n-1)$-tuple of polynomial  vectors $\uu_1,\dots,\uu_{n-1}\in\pv$ such that: 
   $$\sum_{i=1}^{n-1}\deg \uu_i =\deg \ww \text{ and } \uu_1\wedge\dots\wedge\uu_{n-1}=\lambda \ww .$$ for some $\lambda\neq 0\in \K$. If $\lambda=1$, then a \mub  is called \emph{normalized}\footnote{Any \mub $\uu_1,\,\dots,\,\uu_{n-1}$  can be normalized  by dividing any one of the $\uu_i$'s (say the last one)  by $\lambda$.}.
\end{definition}

\begin{remark}\label{rem-mu}\rm It follows  immediately from the definition that  every element $\uu_i$ of a \mub  of $\ww$ belongs to the syzygy module of $\ww$, where
$\syz(\ww)=\{\hh\in \pv\,|\,\big<\ww,\hh\big>=0 \}.$    
It is well known (and not difficult to show) that $\syz(\ww)$ is a free module and a \mub is its column-wise optimal degree basis. We refer the reader to \cite{song-goldman-2009} (in particular, to  Definition 1 and Theorem 2 therein) for  the discussion of several equivalent  definitions of a $\mu$-basis. We chose the most convenient definition for our purposes. 
\qq \end{remark}
\begin{example}[$\mu$-basis]\label{ex-mub}\rm For the vector $$\vv=\begin{bmatrix} t^6+1\\t^3\\t\end{bmatrix}$$ appearing  in Example~\ref{ex-mc} and~\ref{ex-bez} and vectors
$$\uu_1=\begin{bmatrix} 0\\-1\\t^2\end{bmatrix}\text{ and } \uu_2=\begin{bmatrix} t\\-t^4\\-1\end{bmatrix},$$ 
it is easy to check that 
$$\vv=\uu_1\wedge\uu_2\text{ and } \deg\uu_1+\deg\uu_2=\deg \vv.$$
Therefore, $\uu_1$ and $\uu_2$ satisfy Definition~\ref{def-mu} of a normalized $\mu$-basis of $\vv$. As observed in Remark~\ref{rem-mu}, $\uu_1$ and $\uu_2$ is a basis of $\syz(\vv)$. As a side remark, note that B\'{e}zout vectors $\bb_2$ and $\bb_3$, appearing in Example~\ref{ex-bez}, were obtained by adding, respectively, $\uu_1$ and $\uu_2$ to $\bb_1$.
\qq \end{example}
The  notion of a $\mu$-basis, although  less standard than of B\'{e}zout vector, has a long history of applications in geometric modeling, originating with works by Sederberg and Chen \cite{sederberg-chen-1995},   Cox,  Sederberg and Chen \cite{cox-sederberg-chen-1998}. For further development of   the theory  and computation of $\mu$-bases see, for instance,  \cite{zheng-sederberg-2001, chen-wang-2002, andrea-2004, chen-cox-liu-2005, song-goldman-2009, jia-goldman-2009,tesemma-wang-2014, hhk2017, jia-shi-chen-2018})
 The problem of computing a $\mu$-basis  can be  also viewed as a particular case of the problem of computing optimal-degree kernels of~$m\times n$ polynomial matrices of rank $m$ (see for instance Beelen \cite{beelen1987},   Antoniou,  Vardulakis, and Vologiannidis \cite{antoniou2005},  Zhou, Labahn, and Storjohann \cite{zhou-2012} and references therein). For completeness, in the Appendix,  we present Theorem~\ref{thm-mu}  that reduces computing a \mub  to computing the row-echelon form of a matrix over $\K$. This theorem and the resulting algorithm first appeared~in~\cite{hhk2017}.

The next theorem shows how one can use a \bez vector of $\vv$ and a $\mu$-basis of \emph{the \bez vector} to construct a \emph{minimal-degree} matrix-completion  of $\vv$. 
\begin{theorem}[minimal-degree matrix-completion]\label{thm-mc} Let $\vv\in\mathbb{K}[t]^{n}$ be a polynomial vector
such that $\gcd({\vv})=1$. Assume  $\bb$ is a normalized {B\'{e}zout} vector of
$\vv$ 
and  $\left\{
\uu_{1},\dots,\uu_{n-1}\right\}  $ is a  normalized $\mu$-basis of
$\bb$,
 then: 
\begin{enumerate}
\item The matrix  $\pM=\begin{bmatrix}\vv&\uu_1&\dots&\uu_{n-1}\end{bmatrix}$ is a matrix-completion of $\vv$.
\item $\deg \pM =\deg\vv+\deg\bb$.

\item If $\bb$ is a minimal-degree {B\'{e}zout }vector, then $\pM$  is a minimal-degree matrix-completion of~$\vv$.
\end{enumerate}
\end{theorem}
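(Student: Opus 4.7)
The plan is to exploit the classical cofactor-expansion identity that links the outer product of $n-1$ vectors in $\pv$ to the determinant of the $n\times n$ matrix built from them, together with an $n$-th column. I would treat the three parts in order, with Part~1 and Part~2 being direct consequences of the definitions of \bez vector, $\mu$-basis, and outer product, while Part~3 is an optimality argument.

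\textbf{Part 1.} I would expand $|\pM|$ along its first column. By Definition~\ref{def-outer}, the $i$-th component of $\uu_{1}\wedge\dots\wedge\uu_{n-1}$ is exactly the signed minor $(-1)^{i+1}|\pM_{\hat i,\,\hat 1}|$ obtained by deleting the first column and the $i$-th row. Hence
\[
|\pM|=\sum_{i=1}^{n}(-1)^{i+1}\vv_i\,|\pM_{\hat i,\,\hat 1}|=\sum_{i=1}^{n}\vv_i\,(\uu_{1}\wedge\dots\wedge\uu_{n-1})_i=\big\langle \vv,\,\uu_{1}\wedge\dots\wedge\uu_{n-1}\big\rangle.
\]
Since $\{\uu_{1},\dots,\uu_{n-1}\}$ is a \emph{normalized} $\mu$-basis of $\bb$, we have $\uu_{1}\wedge\dots\wedge\uu_{n-1}=\bb$, and since $\bb$ is a \emph{normalized} \bez vector of $\vv$ with $\gcd(\vv)=1$, we get $\langle\vv,\bb\rangle=1$. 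Thus $|\pM|=1$, and together with $\pM_{*1}=\vv$ this verifies Definition~\ref{def-mc}.

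\textbf{Part 2.} The degree of a polynomial matrix is the sum of the degrees of its columns (Definition~\ref{def-mdeg}), so
\[
\deg\pM=\deg\vv+\sum_{i=1}^{n-1}\deg\uu_i=\deg\vv+\deg\bb,
\]
where the last equality is the defining condition of a $\mu$-basis in Definition~\ref{def-mu}.

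\textbf{Part 3.} This is the substantive part. Let $\pN$ be any matrix-completion of $\vv$ and write $\pN=\bigl[\,\vv\mid\ww_1\mid\dots\mid\ww_{n-1}\bigr]$. Set $\hh=\ww_1\wedge\dots\wedge\ww_{n-1}$. By the same cofactor expansion as in Part~1, $\langle\vv,\hh\rangle=|\pN|=1=\gcd(\vv)$, so $\hh$ is a normalized \bez vector of $\vv$. The key degree estimate is
\[
\deg\hh\;\leq\;\sum_{i=1}^{n-1}\deg\ww_i,
\]
which follows because each component of $\hh$ is a signed $(n-1)\times(n-1)$ minor of $[\ww_1\mid\dots\mid\ww_{n-1}]$, hence a sum of products of entries, one from each column, so its degree is at most the sum of the column degrees. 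Using minimality of $\deg\bb$ among normalized \bez vectors of $\vv$, we obtain
\[
\deg\bb\;\leq\;\deg\hh\;\leq\;\sum_{i=1}^{n-1}\deg\ww_i\;=\;\deg\pN-\deg\vv,
\]
and therefore $\deg\pM=\deg\vv+\deg\bb\leq\deg\pN$, proving minimality of $\pM$.

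The only step that requires care is the inequality $\deg\hh\leq\sum_i\deg\ww_i$: one must check that when $\deg\ww_i$ is defined as the maximum of the entry degrees of the $i$-th column, the resulting bound still holds for every minor expansion. This is a short combinatorial argument from the Leibniz formula for the determinant, and is the main (minor) technical point I would be explicit about.
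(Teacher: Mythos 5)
Your proposal is correct and follows essentially the same route as the paper: the identity $|\pM|=\langle\vv,\uu_1\wedge\dots\wedge\uu_{n-1}\rangle=\langle\vv,\bb\rangle=1$ for Parts 1--2, and for Part 3 the observation that the wedge of the last $n-1$ columns of any completion is a B\'ezout vector whose degree is bounded by the sum of the column degrees, combined with minimality of $\deg\bb$. The only difference is that you spell out the cofactor expansion and the Leibniz-formula degree bound more explicitly than the paper does, which is fine.
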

\begin{proof}
Using the definitions of a \bez vector and a $\mu$-basis, we have:
\begin{enumerate}
\item $|\pM|=\left|\begin{matrix}\vv&\uu_1&\dots&\uu_{n-1}\end{matrix}\right|=\big<\vv, \uu_1\wedge\dots\wedge\uu_{n-1}\big>=\big<\vv,\bb\big>=1 $.
 \item $\deg \pM =\deg\vv+\sum_{i=1}^{n-1}\deg(\uu_i) =\deg\vv+\deg \bb$.
\item Assume $\tilde \pM=\begin{bmatrix}\vv&\ww_1&\cdots& \ww_{n-1}\end{bmatrix}$  is any matrix-completion of $\vv$. Let $\ba=\ww_1\wedge\dots\wedge\ww_{n-1}$. Since $|\tilde \pM|=\big<\vv,\ba\big>=1$, then $\ba$ is a \bez vector of $\vv$.  The definition of the outer product implies that $\deg \ba\leq \deg \ww_1+\dots+\deg\ww_{n-1}$ and  so
 $$\deg \tilde \pM =\deg\vv+\sum_{i=1}^{n-1}\deg(\ww_i) \geq\deg\vv+\deg \ba\geq \deg\vv+\deg \bb,$$
 where the last inequality is due to degree-minimality of $\bb$. From Part 2 it follows that $\pM$ is of minimal degree. 
\end{enumerate}
\end{proof}
\begin{remark}\label{rem-norm}\rm 
If,  in the above theorem, we omit the requirement of a  \bez vector and its \mub being \emph{normalized}, then the resulting matrix $\pM$ will have a \emph{non-zero constant determinant}   $\lambda\in\K$. In this case, by dividing the last column of  $\pM$ by $\lambda$ we obtain a unit determinant matrix-completion of $\vv$.  This is the approach we adopt in  Algorithm~\ref{alg-mmc} and its implementation.
\qq \end{remark}
We have an immediate but a very useful corollary of Theorem~\ref{thm-mc}.
\begin{corollary}\label{cor-mmc} Let   $\pM$ be a matrix-completion of a polynomial vector $\vv$.  The degree of $\pM$ is minimal if and only if $\deg \pM=\deg \vv+ \beta$, where $\beta$ is the  minimal degree of \bez vectors of~$\vv$.
\end{corollary}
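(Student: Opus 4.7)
The plan is to derive this as a straightforward two-way consequence of Theorem~\ref{thm-mc}. The theorem already exhibits a \emph{specific} matrix-completion whose degree equals $\deg\vv+\beta$ and is of minimal degree, so the corollary amounts to showing that all minimal-degree matrix-completions share this numerical value.

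First I would observe that the hypothesis ``$\pM$ is a matrix-completion of $\vv$'' guarantees $\gcd(\vv)=1$: cofactor expansion of $|\pM|=1$ along the first column writes $1$ as a $\K[t]$-combination of the entries of $\vv$, so $\gcd(\vv)$ divides $1$. This lets us legitimately invoke Theorem~\ref{thm-mc} for $\vv$. Next, let $\bb$ be any minimal-degree \bez vector of $\vv$ (so $\deg\bb=\beta$) and let $\{\uu_1,\dots,\uu_{n-1}\}$ be a normalized $\mu$-basis of $\bb$; then by Theorem~\ref{thm-mc} the matrix $\pM^*=[\vv\ \uu_1\ \cdots\ \uu_{n-1}]$ is a minimal-degree matrix-completion of $\vv$ with $\deg\pM^*=\deg\vv+\beta$. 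This single object is the pivot for both implications.

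For the $(\Rightarrow)$ direction I would argue: if $\pM$ is of minimal degree, then by Definition~\ref{def-mc} $\deg\pM\le\deg\pM^*$ and $\deg\pM^*\le\deg\pM$, so $\deg\pM=\deg\pM^*=\deg\vv+\beta$. For the $(\Leftarrow)$ direction, assume $\deg\pM=\deg\vv+\beta$. Then $\deg\pM=\deg\pM^*$, and since $\pM^*$ is of minimal degree, any other matrix-completion $\tilde\pM$ of $\vv$ satisfies $\deg\tilde\pM\ge\deg\pM^*=\deg\pM$, showing that $\pM$ is itself of minimal degree.

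I do not anticipate any real obstacle here: the substantive content (that the construction via a minimal \bez vector and its $\mu$-basis actually achieves degree $\deg\vv+\beta$ \emph{and} is simultaneously of minimal degree) is exactly what Theorem~\ref{thm-mc} provides. The only mildly subtle point worth stating explicitly is the preliminary $\gcd(\vv)=1$ observation, which is needed to justify applying Theorem~\ref{thm-mc} to $\vv$ in the first place.
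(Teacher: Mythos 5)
Your proof is correct and follows exactly the route the paper intends: the corollary is stated as an immediate consequence of Theorem~\ref{thm-mc}, whose parts 2 and 3 supply the benchmark completion $\pM^*$ of degree $\deg\vv+\beta$ that is simultaneously of minimal degree, from which both implications follow by comparison. Your preliminary observation that $|\pM|=1$ forces $\gcd(\vv)=1$ (so that Theorem~\ref{thm-mc} applies) is a worthwhile detail the paper leaves implicit.
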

\begin{example}\label{ex-mmc} \rm Returning to $$\vv=\begin{bmatrix} t^6+1\\t^3\\t\end{bmatrix}$$ from Example~\ref{ex-bez} Let us choose its normalized minimal   \bez   vector $$\bb=\bb_1=\begin{bmatrix} 1\\-t^3\\0\end{bmatrix}.$$ A normalized  $\mu$-basis of $\bb$ consists of
$$\uu_1= \begin{bmatrix} 0\\0\\-1\end{bmatrix}\text{ and } \uu_2= \begin{bmatrix} t^3\\1\\0\end{bmatrix}.$$
(\emph{Note that in Example~\ref{ex-mub}, we  presented  a $\mu$-basis of $\vv$, not of $\bb$}.) Observe that matrix-completion $\pM_1$ of $\vv$ presented in Example~\ref{ex-mc} equals to $\begin{bmatrix}\vv&\uu_1&\uu_2\end{bmatrix}$. From Theorem~\ref{thm-mc}, we know that $\pM_1$ is of minimal degree. Matrix completions $\pM_2$ and $\pM_3$ of Example~\ref{ex-mc} are obtained by adjoining $\vv$ with normalized $\mu$-bases of  \bez vectors $\bb_2$ and $\bb_3$ appearing in Example~\ref{ex-bez}.
 
\qq \end{example}
In Remark~\ref{rem-qs} below we discuss the relationship between the matrix-completion problem and the   \QS  problem. The latter can be solved using \bez vector and a $\mu$-basis of \emph{$\vv$ itself}.
 
\begin{remark}\label{rem-qs}  \rm A matrix $\pQ\in\K[t]^{n\times n}$ is called a \QS matrix for  $\vv\in\pv$ if
$$\vv^T\pQ=\begin{bmatrix}1 & 0&\dots&0\end{bmatrix}.$$  It is not difficult to show that $\pQ$ is  a \QS matrix for $\vv$ if and only if 
$\pQ^{-T}$ is a matrix-completion of $\vv$. As discussed in \cite{omf-2017, hough-2018}, assuming $\gcd(\vv)=1$, a \QS matrix~for $\vv$ can be constructed as follows.\footnote{In \cite{omf-2017, hough-2018}, a \QS matrix is called a moving frame of $\vv$. The definition of a moving frame given in the current paper is more closely related to the classical differential geometric frames.}

Let  $\bb$ be a normalized minimal {B\'{e}zout} vector and  $\left\{
\uu_{1},\dots,\uu_{n-1}\right\} $ be a  normalized $\mu$-basis of
$\vv$\footnote{Note the difference with Theorem~\ref{thm-mc}, where~$\left\{
\uu_{1},\dots,\uu_{n-1}\right\}  $ is a $\mu$-basis of $\bb$.} ordered so that $\deg \uu_1\leq\cdots\leq \uu_{n-1}$, then
\beq\label{QSm}\pQ=\begin{bmatrix}\bb&\uu_1&\cdots& \uu_{n-1}\end{bmatrix}\eeq
is a \QS matrix of $\vv$.  
Moreover, $\pQ$ has column-wise optimal degree.  In other words, if $\pP$ is another \QS matrix of $\vv$, whose last $n-1$ columns  are ordered so that their degrees are non-decreasing, then
$$\deg \pP_{*1}\geq\deg \bb,\quad \deg \pP_{*2}\geq\deg \uu_1,\,\dots, \,\deg\pP_{*n}\geq\deg \uu_{n-1}.$$
We note that  column-wise degree optimality is a stronger condition than degree-minimality. 
Due to a close relationship between \QS and   matrix-completion matrices, we would like to underscore \emph{important differences}:
\begin{itemize}
\item As illustrated  in Example~\ref{ex-mc} above,  column-wise degree optimality is not achievable for solutions of the matrix-completion problem.
\item 
As illustrated  in Example~\ref{ex-qs} below:
\begin{itemize}
\item  column-wise degree optimality of a \QS matrix  $\pQ$ does not imply degree-minimality of the corresponding matrix-completion $\pM=\pQ^{-T}$.  
\item degree-minimality of a matrix-completion $\pM$ does not imply column-wise degree optimality of  the corresponding  \QS matrix  $\pQ=\pM^{-T}$
\end{itemize}
\end{itemize}

\qq \end{remark}
\begin{example}\label{ex-qs}\rm For $$\vv=\begin{bmatrix} t^6+1\\t^3\\t\end{bmatrix},$$ we can use a  minimal \bez  $\bb=\bb_1$ and  a \mub $[\uu_1,\uu_2]$ of $\vv$,  presented in  Examples~\ref{ex-bez} and~\ref{ex-mub}, respectively, to produce a \QS matrix
$$\pQ=\begin{bmatrix}\bb&\uu_1&\uu_{3}\end{bmatrix}=\left[\begin{array}{ccc}
1 & 0 & t  
\\
 -t^{3} & -1 & -t^{4} 
\\
 0 & t^{2} & -1 
\end{array}\right].
$$
The matrix $$\pQ^{-T}=\left[\begin{array}{ccc}
t^{6}+1 & -t^{3} & -t^{5} 
\\
 t^{3} & -1 & -t^{2} 
\\
 t  & 0 & -1 
\end{array}\right]
$$
is a matrix completion of $\vv$ of degree 14, which  is \emph{not minimal}. On the other hand, taking the inverse-transpose of a minimal-degree matrix-completion completion $\pM_1$, presented in Example~\ref{ex-mc}, we obtain a \QS matrix 
$$(\pM_1)^{-T}=\left[\begin{array}{ccc}
1 & t  & -t^{3} 
\\
 -t^{3} & -t^{4} & t^{6}+1 
\\
 0 & -1 & 0 
\end{array}\right]
$$
of non-optimal degree.
\qq \end{example}

\subsection{Minimality degree preservation}\label{subs-mind}
In this section, we show that minimal-degree property of \emph{any} matrix-completion map is preserved   by the equivariantization process described in Theorem~\ref{thm-eq-mc}.
We will need a few preliminary results.

 
 \begin{lemma}\label{lem-deg-inv} For $\vv\in\pv$ and $(L,s)\in \slnk\times\K$,
 $$\deg \vv=\deg  \left[(L,s)\cdot \vv\right],$$
 where, as we defined before, $(L,s)\cdot \vv(t)=L\vv(t+s)$.
 \end{lemma}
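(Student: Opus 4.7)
The plan is to decompose the action into its two commuting parts and show that each part preserves the degree of a polynomial vector individually. Writing $(L,s)\cdot \vv(t) = L\vv(t+s) = L\cdot\bigl(s\cdot \vv\bigr)$, it suffices to show that (a) parameter shift preserves degree, and (b) left-multiplication by $L\in\slnk$ preserves degree.

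For (a), expanding $\vv(t+s) = \cv \begin{bmatrix}1& t+s& \cdots &(t+s)^d\end{bmatrix}^T$ where $d=\deg\vv$, each entry $(t+s)^i$ is a polynomial in $t$ of degree exactly $i$, so the $i$-th component of $\vv(t+s)$ is a polynomial in $t$ of degree at most $d$. Moreover, the coefficient of $t^d$ in the $i$-th component of $\vv(t+s)$ is the same as the coefficient of $t^d$ in the $i$-th component of $\vv(t)$, so $\deg[s\cdot \vv] = \deg \vv$.

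For (b), the $i$-th component of $L\vv$ is a $\K$-linear combination of the components of $\vv$, whence
\[
\deg(L\vv) \;=\; \max_i \deg\bigl(\textstyle\sum_j L_{ij}\vv_j\bigr) \;\leq\; \max_{i,j}\deg\vv_j \;=\; \deg\vv.
\]
To obtain the reverse inequality, note that $L\in\slnk$ is invertible with inverse also in $\slnk$, so applying the same bound to $L^{-1}(L\vv) = \vv$ gives $\deg\vv\leq \deg(L\vv)$. Combining with (a), we conclude $\deg[(L,s)\cdot\vv] = \deg[L\cdot(s\cdot\vv)] = \deg[s\cdot\vv] = \deg\vv$.

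There is no real obstacle here; the statement is essentially the observation that the two constituent actions preserve degree, with the invertibility of $L$ upgrading the trivial inequality $\deg(L\vv)\le\deg\vv$ to an equality. The only care needed is to note that although a particular row of $L\vv$ could in principle have smaller degree than the maximum (because of cancellation among the $L_{ij}\vv_j$), the maximum over all rows cannot drop, as the inverse transformation would then fail to recover the original degree.
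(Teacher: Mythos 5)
Your proof is correct, and it follows the same overall decomposition as the paper: treat the shift and the linear action separately, noting that each preserves the degree. The one place where your route genuinely differs is the reverse inequality $\deg(L\vv)\ge\deg\vv$. The paper argues directly on leading coefficients: writing $L\vv=\vv_1L_{*1}+\dots+\vv_nL_{*n}$, the coefficient of $t^d$ in $L\vv$ is $c_{1d}L_{*1}+\dots+c_{nd}L_{*n}=Lc_{*d}$, where $c_{*d}\neq 0$ is the vector of leading coefficients; this can vanish only if the columns of $L$ are linearly dependent, contradicting $|L|=1$. You instead apply the trivial bound $\deg(M\ww)\le\deg\ww$ twice, once to $L$ and once to $L^{-1}$, which is slicker, avoids any inspection of coefficients, and makes transparent that only invertibility of $L$ (not $|L|=1$) is used. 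The paper's version has the mild advantage of exhibiting exactly the cancellation condition under which the degree would drop, which is in the same spirit as the leading-coefficient bookkeeping done elsewhere in the paper (e.g.\ in Theorem~\ref{thm-rho12}). Your treatment of the shift is also slightly more detailed than the paper's one-line assertion, and is accurate: only $(t+s)^d$ contributes to the coefficient of $t^d$, so each component's leading coefficient is unchanged.
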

 \begin{proof} Observe that $\deg \vv(t)=\deg \vv(t+s)$ because a shift of the parameter does not change the degree of a polynomial. Let  $$\vv=\begin{bmatrix} \vv_1\\ \vdots \\ \vv_n\end{bmatrix},$$ and $$\deg \vv=\max_{i=1\dots n}\deg \vv_i=d,\;\;\;\; \text{and}\;\;\;\; {\rm coeff}(\vv_i,d)=c_{id}.$$  
 
\noindent Since $ L\vv=\vv_1L_{*1}+\dots+\vv_nL_{*n}$, we have $\deg (L\vv)\leq \deg \vv=d$, and $\deg L\vv<d$ if and only if
 \beq\label{deg-loss} c_{1d}L_{*1}+\dots+c_{nd}L_{*n}=0.\eeq
Since at least one of the coefficients $c_{id}$ is non-zero, \eq{deg-loss} implies  linear dependence of the columns of $L$, which contradicts our assumption that $L\in \slnk$ and so $|L|=1$. Thus $\deg L\vv =\deg\vv $. 
  \end{proof}

 \begin{corollary}\label{cor-deg-inv} For $\pW(t)\in\K[t]^{n\times l}$ and $(L,s)\in \slnk\times\K$,
 $$\deg \pW=\deg  \left[(L,s)\cdot \pW\right],$$
 where  $(L,s)\cdot \pW(t)=L\pW(t+s)$.
 \end{corollary}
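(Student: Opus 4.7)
The plan is to reduce the matrix statement to the vector statement of Lemma~\ref{lem-deg-inv}, applied one column at a time. Recall that Definition~\ref{def-mdeg} defines the degree of a polynomial matrix as the sum of the degrees of its columns, so the natural strategy is to show that $(L,s)$ acts column-wise and that each column retains its degree.

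First I would observe that the action $(L,s)\cdot \pW(t)=L\pW(t+s)$ commutes with the extraction of columns: for every $j\in\{1,\dots,\ell\}$,
\[
\bigl[(L,s)\cdot \pW\bigr]_{*j} \;=\; \bigl[L\,\pW(t+s)\bigr]_{*j} \;=\; L\,\pW_{*j}(t+s) \;=\; (L,s)\cdot \pW_{*j},
\]
since matrix multiplication by $L$ acts on each column independently and the parameter shift acts entry-wise. Hence each column of $(L,s)\cdot\pW$ is precisely the $(L,s)$-image of the corresponding column of $\pW$, which is an element of $\pv$.

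Now I would apply Lemma~\ref{lem-deg-inv} to each column vector $\pW_{*j}\in\pv$ to conclude
\[
\deg\bigl[(L,s)\cdot\pW\bigr]_{*j} \;=\; \deg\bigl[(L,s)\cdot \pW_{*j}\bigr] \;=\; \deg \pW_{*j}.
\]
Summing over $j=1,\dots,\ell$ and invoking Definition~\ref{def-mdeg} on both sides yields
\[
\deg\bigl[(L,s)\cdot\pW\bigr] \;=\; \sum_{j=1}^{\ell} \deg\bigl[(L,s)\cdot\pW\bigr]_{*j} \;=\; \sum_{j=1}^{\ell} \deg \pW_{*j} \;=\; \deg \pW,
\]
which is the desired equality. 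There is no real obstacle here; the only thing to verify carefully is the column-wise compatibility of the action, which is essentially a bookkeeping observation about how $L\pW(t+s)$ decomposes.
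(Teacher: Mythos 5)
Your proof is correct and follows essentially the same route as the paper: both observe that the action $(L,s)\cdot\pW$ is column-wise, apply Lemma~\ref{lem-deg-inv} to each column, and sum the column degrees per Definition~\ref{def-mdeg}. No issues.
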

 \begin{proof} First, note that the  transformation induced by $(L,s)$ is column-wise:
  $$\big[(L,s)\cdot \pW\big]_{*i}=(L,s)\cdot \pW_{*i},\qquad  i=1,\dots l.$$  
  Then, from Lemma~\ref{lem-deg-inv}, $\deg\big[ (L,s)\cdot \pW\big]_{*i}=\deg \pW_{*i}$.
    Since the degree of a  polynomial matrix is defined as the sum of the  degrees of its columns, the conclusion follows.

  \end{proof}
  \begin{lemma}\label{lem-bez-deg-inv}   For $\vv\in\pv$ such that $\gcd(\vv)=1$ and $(L,s)\in \slnk\times\K$, let
  $\bv=(L,s)\cdot \vv$, and let $\bb$ and $\bar \bb$ be minimal degree \bez vectors of $\vv$ and $\bv$, respectively.
  Then $\gcd(\bv)=1$ and  $$\deg\bb=\deg\bar \bb.$$
 \end{lemma}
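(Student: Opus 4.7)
The plan is to exhibit an explicit degree-preserving bijection between the sets of \bez vectors of $\vv$ and of $\bar\vv$, and then invoke Corollary~\ref{cor-deg-inv} to conclude that minimal degrees match.

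First I would establish $\gcd(\bar\vv)=1$. Write $\bar\vv(t)=L\vv(t+s)$ and suppose $p(t)\in\K[t]$ divides every component of $\bar\vv(t)$. Since $L^{-1}\in\K^{n\times n}$, each component of $\vv(t+s)=L^{-1}\bar\vv(t)$ is a $\K$-linear combination of components of $\bar\vv(t)$, so $p(t)$ divides $\vv_i(t+s)$ for every $i$, and hence $p(t-s)$ divides $\vv_i(t)$ for every $i$. The hypothesis $\gcd(\vv)=1$ forces $p$ to be a constant, yielding $\gcd(\bar\vv)=1$.

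Next I would build the bijection on \bez vectors. The key observation is the contragredient action: for any polynomial vector $\hh\in\pv$, setting $\bar\hh(t):=L^{-T}\hh(t+s)$, we have
\[
\big\langle\bar\vv(t),\bar\hh(t)\big\rangle = \big(L\vv(t+s)\big)^{T}\big(L^{-T}\hh(t+s)\big) = \vv(t+s)^{T}\hh(t+s) = \big\langle\vv,\hh\big\rangle(t+s).
\]
Since $L\in\slnk$ implies $L^{-T}\in\slnk$, the map $\hh\mapsto\bar\hh$ is precisely the action of $(L^{-T},s)\in\slnk\times\K$. It is therefore an invertible map $\pv\to\pv$ that preserves degree by Corollary~\ref{cor-deg-inv}, and the identity above shows it sends \bez vectors of $\vv$ (with constant $\lambda$) to \bez vectors of $\bar\vv$ (with the same constant $\lambda$), because if $\langle\vv,\hh\rangle=\lambda$ then $\langle\bar\vv,\bar\hh\rangle=\lambda$ as well.

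Finally, if $\bb$ is a minimal-degree \bez vector of $\vv$, then $\bar\bb:=(L^{-T},s)\cdot\bb$ is a \bez vector of $\bar\vv$ with $\deg\bar\bb=\deg\bb$, so the minimal \bez degree of $\bar\vv$ is at most $\deg\bb$. Applying the same argument with the inverse group element $(L,s)^{-1}=(L^{-1},-s)$ (using $\vv=(L^{-1},-s)\cdot\bar\vv$) yields the reverse inequality, and equality follows. There is no real obstacle here; the only thing to keep straight is that the action on \bez vectors is the contragredient $L^{-T}$, not $L$ itself, which is what makes the inner product invariant.
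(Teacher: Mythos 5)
Your proposal is correct and follows essentially the same route as the paper's proof: transporting B\'ezout vectors via the contragredient action $(L^{-T},s)$, which preserves both the scalar product with $\vv$ and the degree, and then comparing minimal degrees in both directions. The only cosmetic differences are that you establish $\gcd(\bv)=1$ directly rather than reading it off the transported B\'ezout identity, and you phrase the two-sided degree comparison as a pair of inequalities where the paper uses a contradiction.
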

\begin{proof} Since $\big<\bb(t),\vv(t)\big>=1$, 
we have  $$\big< L^{-T}\bb(t+s),L\vv(t+s)\big>=\big<\tilde\bb,\bv\big>=1,$$ where $$\tilde \bb=(L^{-T},s)\cdot \bb=L^{-T}\bb(t+s).$$ Therefore, $\gcd(\bv)=1$ and $\tilde \bb$ is a \bez vector of $\bv$. From Lemma~\ref{lem-deg-inv}, $\deg \tilde b=\deg b$. 

Assume $\tilde\bb$ is not a \emph{minimal-degree} \bez vector of $\bv$, i.e. $\deg \tilde\bb>\deg \bar\bb$.  Applying the above argument to $\bar \bb$, we see that $\hat\bb=(L^{T},-s)\cdot\bar\bb$ is a \bez vector of $\vv$ such that $\deg\hat\bb=\deg\bar \bb$. From the previous paragraph we have, however, that $\deg\bar \bb<\deg\tilde\bb=\deg \bb$.  Thus $\deg \hat \bb<\deg \bb$, which contradicts the assumption that $\bb$ is of minimal degree. We are forced to conclude that~$\tilde \bb$ is a minimal degree \bez vector of $\bv$ and so
 $\deg\bar\bb=\deg \tilde \bb=\deg \bb$. 
\end{proof}
\begin{proposition}\label{prop-min-deg-inv}
Let $\mc\colon \rv\to  \K[t]^{n\times n}$  be a minimal degree matrix-completion map.  For any  $(L,s)\in \slnk\times \K$ the map $\emc\colon \rv\rightarrow \K[t]^{n\times n}$ 
defined by 
\beq\label{act-mdmc}\emc(\vv)=(L,s)\cdot \mc\left((L,s)^{-1}\cdot\vv\right)\eeq
 is  a minimal degree matrix-completion map.
\end{proposition}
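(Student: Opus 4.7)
The plan is to combine Lemma~\ref{lem-mc} (which already guarantees that $\emc$ is a matrix-completion map) with the degree-invariance results (Corollary~\ref{cor-deg-inv} and Lemma~\ref{lem-bez-deg-inv}) and the characterization of minimality provided by Corollary~\ref{cor-mmc}.  The underlying intuition is simple: both the degree of a polynomial matrix and the minimal \bez degree of a vector are unchanged by the $\slnk\times\K$-action, and minimality of a matrix completion is controlled entirely by these two quantities.

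Concretely, fix $\vv\in\rv$ and set $\bv=(L,s)^{-1}\cdot\vv$.  First I would note that $\bv\in\rv$ (the action restricts to $\rv$, which is implicit in the formulation of the $(L,s)$-action on $\rv$ underlying Definition~\ref{def-emc}).  Then by hypothesis $\mc(\bv)$ is a minimal-degree matrix completion of $\bv$, so Corollary~\ref{cor-mmc} gives
\[
\deg \mc(\bv)=\deg \bv+\bar\beta,
\]
where $\bar\beta$ is the minimal degree of a \bez vector of $\bv$.  Now applying Corollary~\ref{cor-deg-inv} to the polynomial matrix $\mc(\bv)$ and to the polynomial vector $\bv$, we obtain
\[
\deg \emc(\vv)=\deg\bigl[(L,s)\cdot\mc(\bv)\bigr]=\deg\mc(\bv),\qquad \deg\vv=\deg\bv.
\]
Finally, Lemma~\ref{lem-bez-deg-inv} (applied to the pair $\vv,\bv=(L,s)^{-1}\cdot\vv$) yields $\bar\beta=\beta$, where $\beta$ denotes the minimal \bez degree of $\vv$.

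Putting these three equalities together gives $\deg\emc(\vv)=\deg\vv+\beta$, and Corollary~\ref{cor-mmc} in the reverse direction then concludes that $\emc(\vv)$ is a minimal-degree matrix completion of $\vv$.  Since $\vv$ was arbitrary, $\emc$ is a minimal-degree matrix-completion map.  I do not expect any real obstacle here: the proof is essentially a bookkeeping argument assembling results already established.  The only point that warrants a moment of care is verifying that Lemma~\ref{lem-bez-deg-inv} applies to both $\vv$ and $\bv$, which requires that $\gcd(\vv)=\gcd(\bv)=1$; this is immediate because $\vv\in\rv$ and $\rv$ is preserved by the action.
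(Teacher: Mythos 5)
Your proof is correct and follows essentially the same route as the paper's: Lemma~\ref{lem-mc} for the matrix-completion property, Corollary~\ref{cor-deg-inv} and Lemmas~\ref{lem-deg-inv}/\ref{lem-bez-deg-inv} for invariance of the relevant degrees, and the characterization of minimality via $\deg\pM=\deg\vv+\beta$ (the paper invokes Theorem~\ref{thm-mc} directly where you use Corollary~\ref{cor-mmc}, but that is the same content). No gaps.
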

\begin{proof} 
\begin{enumerate}
\item By Lemma~\ref{lem-mc}, $\emc$  is a matrix-completion map.
\item  By Corollary~\ref{cor-deg-inv}, $\deg\emc(\vv)=\deg \mc\left((L,s)^{-1}\cdot\vv\right)$.
\item By Theorem~\ref{thm-mc}  $\deg \mc\left((L,s)^{-1}\cdot\vv\right)=\deg \bv+\deg \bar\bb$, where $\bar v=(L,s)^{-1}\cdot\vv$ and $\bar \bb$ is a minimal  degree \bez vector of $\bv$.
\item By Lemmas~\ref{lem-deg-inv} and~\ref{lem-bez-deg-inv}, $\deg\bv=\deg\vv$ and $\deg\bar\bb=\deg\bb$, where  $\bb$ is a minimal-degree \bez vector of $\vv$.
\end{enumerate}
Combining the above facts, we conclude that $\emc$ is a matrix  completion map, such that, for every $\vv\in\rv$,  $\deg\emc(\vv)=\deg \vv+\deg \bb$, where  $\bb$ is a minimal  degree \bez vector of $\vv$.  Corollary \ref{cor-mmc}, implies that $\emc$ is a minimal degree matrix-completion map.
\end{proof}

\noindent We immediately have the following result:
\begin{theorem}[equivariant minimal-degree matrix-completion map]\label{thm-min-deg-inv}\hfill

\noindent  Let \begin{itemize}
 \item $\mc\colon \rv\to  \K[t]^{n\times n} $ be a minimal-degree matrix-completion map,
     \item    $\rho\colon\rv \to \slnk\times \K$  be any equivariant section (for instance, the map defined by \eq{eq-product-rho} in Theorem~\ref{thm-rho12}).
     \end{itemize}
    Then the map $\emc\colon \rv\rightarrow \K[t]^{n\times n}$ 
defined by 
\beq\label{eqv-mc}\emc(\vv)=\rho(\vv)\cdot \mc\left(\rho(\vv)^{-1}\cdot\vv\right)\eeq
 is  an equivariant  minimal-degree matrix-completion map.
\end{theorem}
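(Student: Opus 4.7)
The plan is to assemble the conclusion from two results that have already been proved in the paper: Theorem~\ref{thm-eq-mc} which handles the equivariance and matrix-completion property, and Proposition~\ref{prop-min-deg-inv} which handles the minimal-degree property under a \emph{constant} group element. The only subtlety is that $\rho$ in Theorem~\ref{thm-min-deg-inv} is not constant but depends on the input $\vv$; however, since ``minimal-degree matrix-completion'' is a pointwise property, we can freeze $\rho(\vv)$ at each $\vv$ and reduce to the constant case.

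First, I would invoke Theorem~\ref{thm-eq-mc} verbatim: the hypotheses on $\mc$ and $\rho$ are exactly those of that theorem, so formula \eq{eqv-mc} already gives an $\slnk\times\K$-equivariant matrix-completion map. This immediately delivers everything except the \emph{minimal-degree} assertion.

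To upgrade to minimal degree, I would argue pointwise. Fix an arbitrary $\vv\in\rv$ and set $g=\rho(\vv)\in\slnk\times\K$, regarded now as a constant element. Then by definition
\[
\emc(\vv)\;=\;g\cdot\mc\!\left(g^{-1}\cdot\vv\right),
\]
which matches the form of the map appearing in \eq{act-mdmc} of Proposition~\ref{prop-min-deg-inv} evaluated at $\vv$. That proposition, applied with the constant pair $(L,s)=g$, tells us that the \emph{entire} map $\vv'\mapsto g\cdot\mc(g^{-1}\cdot\vv')$ is a minimal-degree matrix-completion map; in particular, its value at $\vv$, namely $\emc(\vv)$, is a minimal-degree matrix completion of $\vv$. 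Since $\vv$ was arbitrary, $\emc$ is a minimal-degree matrix-completion map per Definition~\ref{def-mc}.

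I do not anticipate a real obstacle: the heavy lifting was already done in Lemmas~\ref{lem-deg-inv} and~\ref{lem-bez-deg-inv} (invariance of $\deg\vv$ and of the minimal \bez degree under the $\slnk\times\K$-action) and packaged into Proposition~\ref{prop-min-deg-inv}. The only thing to make explicit is the conceptual point that degree-minimality is checked at each $\vv$ independently, so the $\vv$-dependence of $\rho$ is harmless: at every point we are applying a constant group element, and the constant-element case is already settled.
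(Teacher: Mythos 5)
Your proof is correct and follows essentially the same route as the paper: the paper likewise cites Theorem~\ref{thm-eq-mc} for equivariance and the matrix-completion property, and then disposes of degree-minimality via Proposition~\ref{prop-min-deg-inv} (the paper phrases this as ``use the same argument with $L$ replaced by $L(\vv)$ and $s$ by $s(\vv)$''). Your explicit ``freeze $\rho(\vv)$ and apply the constant-element proposition pointwise'' reduction is a clean and fully rigorous way of making that substitution precise, since degree-minimality is indeed checked one $\vv$ at a time.
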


\begin{proof}

The fact that the map  $\emc\colon \rv\rightarrow \K[t]^{n\times n}$ is   an equivariant \emph{matrix completion map}  follows  from Theorem~\ref{thm-eq-mc}. To show that it is of \emph{minimal degree} map we can use the same argument as in Proposition~\ref{prop-min-deg-inv} with $L$ replaced by $L(\vv)$ and $s$ replaced with $s(\vv)$, where $\rho(\vv)=\left(L(\vv),s(\vv)\right)$.

 \end{proof}
\begin{remark}\rm Theorem~\ref{thm-min-deg-inv} tells us that if $\mc$ is of \emph{minimal degree} and $\emc$ is its equivariantization then
\beq\label{eq-deg-mc}\deg\mc(\vv)=\deg\emc(\vv) \text { for all } \vv\in\rv.\eeq 
It is worthwhile to note that condition~\eq{eq-deg-mc} does not hold if  $\mc$ \emph{is not a minimal-degree} matrix-completion map. In other words, equivariantization of an arbitrary matrix-completion map does not have to preserve the degree of the output.

To illustrate this, we define a non-minimal matrix completion map $\mathcal N$ as follows. For~$\vv \in \rv$, let $\ww_1,\dots, \ww_{n-1}$ is a \mub of $\vv$ ordered so that $\deg \ww_1\leq\cdots\leq \ww_{n-1}.$   Let $\bb$ be a minimal \bez vector of $\vv$. Then  $\hat\bb=\bb+t^{\deg(\vv_1)}\ww_{n-1}$, where $\vv_1$ is the first component of vector $\vv$, is also a \bez vector of $\vv$. However, since $\deg \bb < \deg \ww_{n-1}$ (see \cite[Theorem 2]{omf-2017}), we have $\deg\hat\bb>\deg \bb$, and so this is  a \emph{non-minimal}  \bez vector. Let $\uu_1,\dots, \uu_{n-1}$ be a \mub of $\hat\bb$. Then by Theorem~\ref{thm-mc} 
$$\mathcal N(\vv)=\begin{bmatrix}\vv&\uu_1&\dots&\uu_{n-1}\end{bmatrix} $$
is a non-minimal matrix completion of $\vv$ of degree $\deg\vv+\deg\hat\bb=\deg\vv+\deg\vv_1+\deg\ww_{n-1} $.  

We use the above construction along with an implementation of $\mu$-basis and minimal-degree \bez vector algorithms  described in the Appendix, to obtain a concrete example of  a \emph{non-minimal} matrix completion algorithm also implemented in \cite{code}. Let $\widetilde{\mathcal N}$ be an equivariantization of $\mathcal N$ obtained as prescribed by Theorem~\ref{thm-min-deg-inv}, with $\rho\colon\rv \to \slnk\times \K$ given  by \eq{eq-product-rho} in Theorem~\ref{thm-rho12}.

For $\vv=[t^3,t,1]^T$ we compute that $\rho(\vv)=(L,0)$, where $L= \left[\begin{array}{ccc}
0 & 0 & -1 
\\
 0 & 1 & 0 
\\
 1 & 0 & 0 
\end{array}\right]$.
Furthermore, for $\vv$ we have:
$$\mathcal N(\vv)=\left[\begin{array}{ccc}
t^{3} & t^{2} & -1 
\\
 t  & 1 & 0 
\\
 1 & 0 & -t^{3} 
\end{array}\right] \text{ while } \widetilde{\mathcal N}(\vv)= L\mathcal N(L^{-1}\vv)=\left[\begin{array}{ccc}
t^{3} & -1 & 0 
\\
 t  & 0 & -1 
\\
 1 & -1 & t^{2} 
\end{array}\right]
$$
and observe that
$\deg \mathcal N(\vv) > \deg \widetilde{\mathcal N}(\vv)$. \hfill \qq
\end{remark}
\section{Equi-affine minimal-degree moving frames}\label{sect-mdeamf}
Finally, we integrate  the results of previous sections into one
theorem  that describes an equi-affine minimal-degree moving frame map. Then we   describe the map   as a self-contained algorithm for the readers who would like
to implement it without knowing all the underlying theory. Our own code is posted \cite{code}. 
\begin{theorem}[equi-affine minimal-degree moving frame map]\label{thm-mdeamf}\hfill

\noindent  Let
        \begin{itemize}
                \item $\mc\colon \rv\to  \K[t]^{n\times n} $ be a minimal-degree matrix-completion map,
                \item  $\rho\colon\rv \to \slnk\times \K$   be any equivariant section (for instance, the map defined by \eq{eq-product-rho} in Theorem~\ref{thm-rho12}).        \end{itemize}
        Then the map $\mfm\colon \rc\to \K[t]^{n\times n}$ 
        defined by 
        \beq\label{eq-mdeamf}\mfm(\cc)=\rho(\cc')\cdot \mc\left(\rho(\cc')^{-1}\cdot\cc'\right)\eeq
        is an equi-affine minimal-degree moving frame map.
        
\end{theorem}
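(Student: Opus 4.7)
The plan is to show that Theorem~\ref{thm-mdeamf} is a direct consequence of Theorem~\ref{thm-min-deg-inv} combined with Theorem~\ref{thm-trivial}, together with the elementary observation that the differentiation operator $\cc\mapsto\cc'$ intertwines the relevant group actions on $\rc$ and $\rv$. Essentially, all the conceptual work has been done; the final step is a bookkeeping argument that bridges the ``curve'' side (equipped with an $SA_n\times SA_1$ action) with the ``tangent'' side (equipped with an $\slnk\times\K$ action).

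First, I would apply Theorem~\ref{thm-min-deg-inv} to the given pair $(\mc,\rho)$. The hypotheses match verbatim, so the associated map
\[
\emc(\vv)=\rho(\vv)\cdot\mc\bigl(\rho(\vv)^{-1}\cdot\vv\bigr)
\]
is an $\slnk\times\K$-equivariant minimal-degree matrix-completion map from $\rv$ to $\K[t]^{n\times n}$. Comparing with \eqref{eq-mdeamf}, we have $\mfm(\cc)=\emc(\cc')$. Since Proposition~\ref{prop-diff} ensures that $\cc\mapsto\cc'$ is a well-defined surjection $\rc\to\rv$, the map $\mfm$ is well-defined on all of $\rc$. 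Theorem~\ref{thm-trivial} then implies that $\mfm$ is a moving frame map, and that it inherits minimality of degree from $\emc$.

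It remains to check the two equivariance conditions of Definition~\ref{def-eamf}. The only subtlety is that ambient equivariance involves both the $\slnk$-action \eqref{L-act} and the $\K^n$-action \eqref{a-act} by translations, whereas $\emc$ is only $\slnk$-equivariant. This mismatch is handled by differentiation, which kills constants: for any $L\in\slnk$, $a\in\K^n$, the identity $\bigl(a\cdot(L\cdot\cc)\bigr)'=(L\cc+a)'=L\cc'=L\cdot\cc'$ yields
\[
\mfm\bigl(a\cdot(L\cdot\cc)\bigr)=\emc(L\cdot\cc')=L\cdot\emc(\cc')=L\cdot\mfm(\cc).
\]
For parameter-shift equivariance, $(s\cdot\cc)'(t)=\cc'(t+s)=(s\cdot\cc')(t)$, so the $\K$-equivariance of $\emc$ gives $\mfm(s\cdot\cc)=\emc(s\cdot\cc')=s\cdot\emc(\cc')=s\cdot\mfm(\cc)$.

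I do not anticipate any serious obstacle here: the substantive content lives in Theorems~\ref{thm-rho12}, \ref{thm-eq-mc}, \ref{thm-mc}, and \ref{thm-min-deg-inv}, and the statement of Theorem~\ref{thm-mdeamf} is their clean repackaging. The only thing one has to be careful about is to notice that the translation subgroup of $SA_n(\K)$ is precisely the kernel of the induced action on $\cc'$, which is why an $\slnk\times\K$-equivariant matrix-completion map automatically gives an $SA_n\times SA_1$-equivariant moving frame map after precomposition with differentiation.
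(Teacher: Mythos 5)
Your proposal is correct and follows essentially the same route as the paper: apply Theorem~\ref{thm-min-deg-inv} to obtain an equivariant minimal-degree matrix-completion map $\emc$, then invoke Theorem~\ref{thm-trivial} with $\mfm(\cc)=\emc(\cc')$. The only difference is that you spell out the equivariance check that Theorem~\ref{thm-trivial} already packages (the observation that differentiation kills translations), which the paper handles by citation.
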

\begin{proof}
\begin{enumerate}
\item By Theorem~\ref{thm-min-deg-inv}, $\emc\colon\rv\to \pmat$, defined by $\emc(\vv)=\rho(\vv)\cdot \mc\left(\rho(\vv)^{-1}\cdot\vv\right)$ is an equivariant  minimal-degree matrix-completion map.

\item By Theorem ~\ref{thm-trivial},    $\mfm\colon \rc\to \K[t]^{n\times n}$ is a minimal degree equi-affine moving frame map. 

 \end{enumerate}

 \end{proof}
 
 \medskip
 
\noindent {The above theorem leads to the following algorithm, which we describe below as a sequence of procedures, in  the top-down order.}

\begin{algorithm}
        [Equi-Affine Moving Frame Map]\ $\pF\leftarrow EAMFM\left(  \cc\right)  $
        
        \begin{description}
                \item[In:\;\;\;] $\cc \in \mathcal{C}$
                
                \item[Out:] $\pF \in \K[t]^{n\times n}$
        \end{description}
        
        \begin{enumerate}
                \item $\vv\leftarrow \cc^{\prime}$
                
                \item $\pF\leftarrow EMCM\left(  \vv\right)  $
        \end{enumerate}
\end{algorithm}

\begin{example} $EAMFM$\  	

	\begin{enumerate}
    	\item[] $\cc = \left[\begin{array}{c}
    		\frac{1}{5} t^{5}+\frac{1}{4} t^{4}+\frac{2}{3} t^{3}+t  
    		\\
    		\frac{2}{5} t^{5}+\frac{1}{4} t^{4}+t^{3}+2 t  
    		\\
    		\frac{3}{5} t^{5}+\frac{1}{4} t^{4}+\frac{4}{3} t^{3}+\frac{5}{2} t^{2}+3 t  
    	\end{array}\right]
    	$
		\item $\vv\leftarrow \left[\begin{array}{c}
			t^{4}+t^{3}+2 t^{2}+1 
			\\
			2 t^{4}+t^{3}+3 t^{2}+2 
			\\
			3 t^{4}+t^{3}+4 t^{2}+5 t +3 
		\end{array}\right]
		$
		
		\item $\pF\leftarrow \left[\begin{array}{ccc}
			t^{4}+t^{3}+2 t^{2}+1 & 0 & \frac{16 t}{27} 
			\\
			2 t^{4}+t^{3}+3 t^{2}+2 & \frac{27}{40} & -\frac{22 t}{27}-\frac{34}{27} 
			\\
			3 t^{4}+t^{3}+4 t^{2}+5 t +3 & \frac{243}{80} & -\frac{65 t}{9}-\frac{113}{27} 
		\end{array}\right]
		$
		\end{enumerate}
	
\end{example}

\begin{algorithm}
        [Equivariant Matrix-Completion Map]\ $\pM\leftarrow EMCM\left( \vv\right)  $
        
        \begin{description}
                \item[In:\;\;\;] $\vv \in \rv$
                
                \item[Out:] $\pF \in \K[t]^{n\times n}$
        \end{description}
        
        \begin{enumerate}
                \item $L,s\leftarrow ES\left(  \vv\right)  $
                
                \item $\bar{\vv}\leftarrow L^{-1}\cdot s^{-1}\cdot{\vv}  $
                
                \item $\overline{\pM}\leftarrow MMC(\bar{\vv}) $
                
                \item $\pM\leftarrow s\cdot L\cdot\overline{\pM} $
        \end{enumerate}
\end{algorithm}

\begin{example} $EMCM$ \

	\begin{enumerate}
		\item[] $\vv = \left[\begin{array}{c}
			t^{4}+t^{3}+2 t^{2}+1 
			\\
			2 t^{4}+t^{3}+3 t^{2}+2 
			\\
			3 t^{4}+t^{3}+4 t^{2}+5 t +3 
		\end{array}\right]
		$

		\item $L \leftarrow 
		\left[\begin{array}{ccc}
			-\frac{31}{27} & -\frac{1}{3} & \frac{1}{5} 
			\\
			-\frac{53}{27} & -\frac{5}{3} & \frac{2}{5} 
			\\
			\frac{20}{9} & -3 & \frac{3}{5} 
		\end{array}\right]
		$
		 
		\item[] $s \leftarrow \frac{1}{3}$
		
		\item $\bar{\vv}\leftarrow \left[\begin{array}{c}
			t -\frac{1}{3} 
			\\
			t^{3}-\frac{1}{27} 
			\\
			5 t^{4}+\frac{25}{3} t^{2}+\frac{325}{81} 
		\end{array}\right]		
		$
		
		\item $\overline{\pM} \leftarrow \left[\begin{array}{ccc}
			t -\frac{1}{3} & \frac{27}{80} & -t  
			\\
			t^{3}-\frac{1}{27} & -\frac{9}{16} & \frac{5 t}{3}+\frac{16}{27} 
			\\
			5 t^{4}+\frac{25}{3} t^{2}+\frac{325}{81} & 1 & 0 
		\end{array}\right]
		 $
		
		\item $\pM\leftarrow \left[\begin{array}{ccc}
			t^{4}+t^{3}+2 t^{2}+1 & 0 & \frac{16 t}{27} 
			\\
			2 t^{4}+t^{3}+3 t^{2}+2 & \frac{27}{40} & -\frac{22 t}{27}-\frac{34}{27} 
			\\
			3 t^{4}+t^{3}+4 t^{2}+5 t +3 & \frac{243}{80} & -\frac{65 t}{9}-\frac{113}{27} 
		\end{array}\right]
		$
	\end{enumerate}
\end{example}

\begin{algorithm}
        [Equivariant Section]$(L,s)\leftarrow ES\left(  \vv\right)  $
        
        \begin{description}
                \item[In:\;\;\;] $\vv \in  \rv$
                
                \item[Out:] $(L,s) \in \K[t]^{n\times n} \times \K$             
        \end{description}
        
        \begin{enumerate}
                \item $s \leftarrow  ES2(\vv) $ 

                \item $\bar{\vv} \leftarrow s^{-1}\cdot\vv $ 

                \item $L \leftarrow  ES1(\bar{\vv})$
        \end{enumerate}
\end{algorithm}

\begin{example} $ES$\ 
	\begin{enumerate}
		\item[] $\vv=\left[\begin{array}{c}
			t^{4}+t^{3}+2 t^{2}+1 
			\\
			2 t^{4}+t^{3}+3 t^{2}+2 
			\\
			3 t^{4}+t^{3}+4 t^{2}+5 t +3 
		\end{array}\right]
		$

		\item $s \leftarrow  \frac{1}{3}$ 
		
		\item $\bar{\vv} \leftarrow \left[\begin{array}{c}
			\left(t -\frac{1}{3}\right)^{4}+\left(t -\frac{1}{3}\right)^{3}+2 \left(t -\frac{1}{3}\right)^{2}+1 
			\\
			2 \left(t -\frac{1}{3}\right)^{4}+\left(t -\frac{1}{3}\right)^{3}+3 \left(t -\frac{1}{3}\right)^{2}+2 
			\\
			3 \left(t -\frac{1}{3}\right)^{4}+\left(t -\frac{1}{3}\right)^{3}+4 \left(t -\frac{1}{3}\right)^{2}+5 t +\frac{4}{3} 
		\end{array}\right]
	    =\left[\begin{array}{c}
	    	t^{4}-\frac{1}{3} t^{3}+\frac{5}{3} t^{2}-\frac{31}{27} t +\frac{97}{81} 
	    	\\
	    	2 t^{4}-\frac{5}{3} t^{3}+\frac{10}{3} t^{2}-\frac{53}{27} t +\frac{188}{81} 
	    	\\
	    	3 t^{4}-3 t^{3}+5 t^{2}+\frac{20}{9} t +\frac{16}{9} 
	    \end{array}\right]
		$

		\item $L \leftarrow  \left[\begin{array}{ccc}
			-\frac{31}{27} & -\frac{1}{3} & \frac{1}{5} 
			\\
			-\frac{53}{27} & -\frac{5}{3} & \frac{2}{5} 
			\\
			\frac{20}{9} & -3 & \frac{3}{5} 
		\end{array}\right]
		$
	\end{enumerate}
\end{example}

\begin{algorithm}[Equivariant Section 1]
$L\leftarrow ES1(\vv) $

\begin{description}
        \item[In:] $\vv \in  \rv$
        
        \item[Out:] $L \in \K[t]^{n\times n} $
\end{description}

\begin{enumerate}
        \item $d\leftarrow\deg\left( \vv\right) $
        
        \item $\cv\leftarrow $ the $n \times (d+1)$ matrix such that $\vv=\cv\left[ 
        \begin{array}{c}
                t^{0} \\ 
                \vdots \\ 
                t^{d}%
        \end{array}
        \right] $
        
        \item $\bar{\cv}\leftarrow$ the right-most full rank submatrix of $\cv$
        
        \item $L\leftarrow$ the matrix obtained from $\overline{\cv}$ by dividing its last column by $|\bar{\cv}|$
\end{enumerate}
\end{algorithm}

\begin{example} $ES1$\ 
\begin{enumerate}
	\item[] $\vv =\left[\begin{array}{c}
		t^{4}-\frac{1}{3} t^{3}+\frac{5}{3} t^{2}-\frac{31}{27} t +\frac{97}{81} 
		\\
		2 t^{4}-\frac{5}{3} t^{3}+\frac{10}{3} t^{2}-\frac{53}{27} t +\frac{188}{81} 
		\\
		3 t^{4}-3 t^{3}+5 t^{2}+\frac{20}{9} t +\frac{16}{9} 
	\end{array}\right]
	$
	\item $d\leftarrow 4$
	
	\item $\cv\leftarrow \left[\begin{array}{ccccc}
		\frac{97}{81} & -\frac{31}{27} & \frac{5}{3} & -\frac{1}{3} & 1 
		\\
		\frac{188}{81} & -\frac{53}{27} & \frac{10}{3} & -\frac{5}{3} & 2 
		\\
		\frac{16}{9} & \frac{20}{9} & 5 & -3 & 3 
	\end{array}\right]
	$
	
	\item $\bar{\cv}\leftarrow \left[\begin{array}{ccc}
		-\frac{31}{27} & -\frac{1}{3} & 1 
		\\
		-\frac{53}{27} & -\frac{5}{3} & 2 
		\\
		\frac{20}{9} & -3 & 3 
	\end{array}\right]
	$
	
	\item $L\leftarrow \left[\begin{array}{ccc}
		-\frac{31}{27} & -\frac{1}{3} & \frac{1}{5} 
		\\
		-\frac{53}{27} & -\frac{5}{3} & \frac{2}{5} 
		\\
		\frac{20}{9} & -3 & \frac{3}{5} 
	\end{array}\right]
	$
\end{enumerate}
\end{example}

\begin{algorithm}[Equivariant Section 2]
$s\leftarrow ES2 \left( \vv\right) $

\begin{description}
        \item[In:] $\vv\in  \rv$
        
        \item[Out:] $s \in \K$
\end{description}

\begin{enumerate}
        \item $d\leftarrow\deg\left( \vv\right) $

    \item $\cv\leftarrow $ the $n \times (d+1)$ matrix such that $\vv=\cv\left[ 
        \begin{array}{c}
                t^{0} \\ 
                \vdots \\ 
                t^{d}%
        \end{array}
        \right] $
        
        \item $\bar{\cv}\leftarrow$ the right-most full rank submatrix of $\cv$
        
        \item $L\leftarrow$ the matrix obtained from $\bar{\cv}$ by dividing its last column by $|\bar{\cv}|$
        
        \item $k\leftarrow$ the index of the right most column of $\cv$
        not included in $\overline{\cv}$  
        
        \hspace{2em} (where the columns are indexed from $0$ to $d$)

        \item $\bar{\vv}\leftarrow L^{-1}\cdot \vv$
        
        \item $s\leftarrow \frac{\;\;\;\;\;\;\mathrm{coeff}(\bar{\vv}_{n-(d-k-1)},t,k)}{(k+1)\;\mathrm{coeff}(\bar{\vv}_{n-(d-k-1)},t,k+1)}$
\end{enumerate}
\end{algorithm}

\begin{example} $ES2$\ 
\begin{enumerate}
	\item[] $\vv = \left[\begin{array}{c}
		t^{4}+t^{3}+2 t^{2}+1 
		\\
		2 t^{4}+t^{3}+3 t^{2}+2 
		\\
		3 t^{4}+t^{3}+4 t^{2}+5 t +3 
	\end{array}\right]
	$
	\item $d\leftarrow 4$

    \item $\cv\leftarrow \left[\begin{array}{ccccc}
    	1 & 0 & 2 & 1 & 1 
    	\\
    	2 & 0 & 3 & 1 & 2 
    	\\
    	3 & 5 & 4 & 1 & 3 
    \end{array}\right]
     $
	
	\item $\bar{\cv}\leftarrow\left[\begin{array}{ccc}
		0 & 1 & 1 
		\\
		0 & 1 & 2 
		\\
		5 & 1 & 3 
	\end{array}\right]
	$
	
	\item $L\leftarrow \left[\begin{array}{ccc}
		0 & 1 & \frac{1}{5} 
		\\
		0 & 1 & \frac{2}{5} 
		\\
		5 & 1 & \frac{3}{5} 
	\end{array}\right]
	$
	
	\item $k\leftarrow 2$

	\item $\bar{\vv}\leftarrow \left[\begin{array}{c}
		t  
		\\
		t^{3}+t^{2} 
		\\
		5 t^{4}+5 t^{2}+5 
	\end{array}\right]
	$
	
	\item $s\leftarrow \frac{1}{3}$
\end{enumerate}
\end{example}

\begin{algorithm}[Minimal-degree Matrix Completion]\label{alg-mmc}
        $\pM\leftarrow MMC\left( \vv\right) $
        
        \begin{description}
                \item[In:] $\vv\in  \rv$
                
                \item[Out:] $\pM  \in \K[t]^{n\times n}$
        \end{description}
        
        \begin{enumerate}
                \item $\bb \leftarrow $ a minimal-degree B\'ezout vector of $\vv$
                
                \item $\uu_1,\ldots,\uu_{n-1} \leftarrow $ a $\mu$-basis of $\bb$
                                
                \item $\pM' \leftarrow$ the matrix $[\vv,\uu_1,\ldots,\uu_{n-1}]$
                
                \item $\pM \leftarrow$ the matrix obtained from $\pM'$ by  dividing its last column by $|\pM'|$
        \end{enumerate}
\end{algorithm}

\begin{example} $MMC$\ 
	\begin{enumerate}
		\item[] $\vv = \left[\begin{array}{c}
		t -\frac{1}{3} 
		\\
		t^{3}-\frac{1}{27} 
		\\
		5 t^{4}+\frac{25}{3} t^{2}+\frac{325}{81} 
		\end{array}\right]
		$
		\item $\bb \leftarrow
		\left[\begin{array}{c}
			-\frac{16}{27}-\frac{5 t}{3} 
			\\
			-t  
			\\
			\frac{1}{5} 
		\end{array}\right]
		$
		
		\item $\uu_1,\uu_{2} \leftarrow \left[\begin{array}{c}
			\frac{27}{80} 
			\\
			-\frac{9}{16} 
			\\
			1 
		\end{array}\right]
		, 
		\left[\begin{array}{c}
			-\frac{3 t}{5} 
			\\
			t +\frac{16}{45} 
			\\
			0 
		\end{array}\right]
		$
		
		\item $\pM' \leftarrow\left[\begin{array}{ccc}
			t -\frac{1}{3} & \frac{27}{80} & -\frac{3 t}{5} 
			\\
			t^{3}-\frac{1}{27} & -\frac{9}{16} & t +\frac{16}{45} 
			\\
			5 t^{4}+\frac{25}{3} t^{2}+\frac{325}{81} & 1 & 0 
		\end{array}\right]
		$
		
	  \item $\pM \leftarrow	\left[\begin{array}{ccc}
			t -\frac{1}{3} & \frac{27}{80} & -t  
			\\
			t^{3}-\frac{1}{27} & -\frac{9}{16} & \frac{5 t}{3}+\frac{16}{27} 
			\\
			5 t^{4}+\frac{25}{3} t^{2}+\frac{325}{81} & 1 & 0 
		\end{array}\right]
		$
		
	\end{enumerate}
\end{example}


\section{Appendix:\\ Minimal {\bez}vector and  $\mu$-basis constructions}\label{sect-bez-mu}
This appendix provides a theoretical background  for the minimal-degree B\'ezout vector and  $\mu$-basis algorithms that we used in our implementation.  It draws the material from 
 a published paper  \cite{hhk2017}, an unpublished manuscript \cite{omf-2017} and a Ph.D.~Thesis \cite{hough-2018}.
Theorems~\ref{thm-bez} and~\ref{thm-mu} show that, given a vector $\vv\in\mathbb{K}[t]^{n}_{d}$, 
such that $\gcd(\vv)=1$, one can deduce a  minimal-degree {B\'ezout} vector of $\vv$
and a $\mu$-basis of $\vv$ from the linear relationships
among certain columns of the same ${(2d+1)\times(nd+n+1)}$ Sylvester-type matrix over $\K$. This leads to  easy-to-implement algorithms, based on the row echelon reduction over $\K$,  whose details  are presented in the above references.

\subsection{Sylvester-type matrix $A$ and its properties}

\label{ssect-A}
For a nonzero polynomial vector 
\begin{equation}
\label{eq-a}
\vv=\underset{\cv}{\underbrace{\left[
\begin{array}
[c]{ccc}%
v_{10} & \dots & v_{1d}\\
\vdots & \vdots & \vdots\\
v_{n0} & \dots & v_{nd}
\end{array}
\right]  }}\left[
\begin{array}
[c]{c}%
t^{0}\\
\vdots\\
t^{d}%
\end{array}
\right]
\in\pv_d,%
\end{equation}
we correspond a $\mathbb{K}^{(2d+1)\times
n(d+1)}$ matrix
\beq
\label{eq-A}A=\left[
\begin{array}
[c]{cccccccccc}%
v_{10} & \cdots & v_{n0} &  &  &  &  &  &  & \\
\vdots & \cdots & \vdots & v_{10} & \cdots & v_{n0} &  &  &  & \\
\vdots & \cdots & \vdots & \vdots & \cdots & \vdots & \ddots &  &  & \\
v_{1d} & \cdots & v_{nd} & \vdots & \cdots & \vdots & \ddots & v_{10} & \cdots
& v_{n0}\\
&  &  & v_{1d} & \cdots & v_{nd} & \ddots & \vdots & \cdots & \vdots\\
&  &  &  &  &  & \ddots & \vdots & \cdots & \vdots\\
&  &  &  &  &  &  & v_{1d} & \cdots & v_{nd}%
\end{array}
\right]
\eeq
with the blank spaces filled by zeros. In other words, matrix $A$ is obtained
by taking $d+1$ copies of a $(d+1)\times n$ matrix $\cv^T$. The blocks are repeated horizontally from left to
right, and each block is shifted down by one relative to the previous one.
\begin{example}\label{ex-A} \rm
For 
$$\vv=\left[
\begin{array}
[c]{c}%
2+t+t^{4}\\
3+t^2+t^{4}\\
6+2t^3+t^{4}%
\end{array}
\right]= \left[
\begin{array}
[c]{ccccc}%
2 &1&0 &0 & 1\\
3 &0&1 &0 & 1\\
6 &0&0 &2 & 1\\
\end{array}
\right] \left[
\begin{array}
[c]{c}%
t^{0}\\
\vdots\\
t^{4}%
\end{array}
\right],
$$
we have  $n=3$, $d=4$, and a $9\times15$ matrix of Sylvester-type:
\[
A=\left[
\begin{array}
[c]{rrrrrrrrrrrrrrr}%
2 & 3 & 6 &  &  &  &  &  &  &  &  &  &  &  & \\
1 & 0 & 0 & 2 & 3 & 6 &  &  &  &  &  &  &  &  & \\
0 & 1 & 0 & 1 & 0 & 0 & 2 & 3 & 6 &  &  &  &  &  & \\
0 & 0 & 2 & 0 & 1 & 0 & 1 & 0 & 0 & 2 & 3 & 6 &  &  & \\
1 & 1 & 1 & 0 & 0 & 2 & 0 & 1 & 0 & 1 & 0 & 0 & 2 & 3 & 6\\
&  &  & 1 & 1 & 1 & 0 & 0 & 2 & 0 & 1 & 0 & 1 & 0 & 0\\
&  &  &  &  &  & 1 & 1 & 1 & 0 & 0 & 2 & 0 & 1 & 0\\
&  &  &  &  &  &  &  &  & 1 & 1 & 1 & 0 & 0 & 2\\
&  &  &  &  &  &  &  &  &  &  &  & 1 & 1 & 1
\end{array}
\right].
\]
\qq \end{example}
\begin{definition}
A column of any matrix $N$ is called \emph{pivotal} if it is either the first
column and is nonzero or it is linearly independent of all previous columns.
The rest of the columns of $N$ are called \emph{non-pivotal}. The index of a
pivotal (non-pivotal) column is called a \emph{pivotal (non-pivotal) index}.
\end{definition}

From this definition, it follows that every non-pivotal column can be written
as a linear combination of the preceding \emph{pivotal columns}.
We denote the set of pivotal indices of $A$ as $p$ and the set of its
non-pivotal indices as $q$. The following two lemmas, proved in \cite[Lemmas 17 and 19]{hhk2017} show how the specific structure of the matrix $A$ is reflected
in the structure of the set of non-pivotal indices~$q$.

\begin{lemma}
[periodicity]\label{periodic}If $j\in q$ then $j+kn\in q$ for $0\leq
k\leq\left\lfloor \frac{n(d+1)-j}{n}\right\rfloor $. Moreover,
\begin{equation}
\label{eq-periodic}A_{*j}=\sum_{ r<j} \alpha_{r}\, A_{*r}\quad\Longrightarrow
\quad A_{*j+kn}=\sum_{ r<j} \alpha_{r}\, A_{*r+kn},
\end{equation}
where $A_{*j}$ denotes the $j$-th column of $A$.
\end{lemma}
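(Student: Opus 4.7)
The plan is to exploit the explicit block-shift structure of $A$. By construction, $A$ is built from $d+1$ horizontal copies of the $(d+1)\times n$ block $V^{T}$, and the $(k+1)$-st block is placed $k$ rows below the first. Consequently, if $S\colon\K^{2d+1}\to\K^{2d+1}$ denotes the ``down-shift'' linear map sending $(x_{1},\dots,x_{2d+1})^{T}$ to $(0,x_{1},\dots,x_{2d})^{T}$, then the columns of $A$ satisfy the fundamental identity
\begin{equation}\label{eq-shift}
A_{*j+n} \;=\; S\bigl(A_{*j}\bigr)\qquad\text{for every } 1\le j\le n(d+1)-n.
\end{equation}
This is the first step I would verify: check~\eqref{eq-shift} directly against the definition~\eqref{eq-A} by matching entries $(i,j+n)$ with $(i-1,j)$.

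Granting~\eqref{eq-shift}, linearity of $S$ immediately gives the shift of linear relations: if
\[
A_{*j}\;=\;\sum_{r<j}\alpha_{r}\,A_{*r},
\]
then applying $S$ yields
\[
A_{*j+n} \;=\; S(A_{*j}) \;=\; \sum_{r<j}\alpha_{r}\,S(A_{*r}) \;=\;\sum_{r<j}\alpha_{r}\,A_{*r+n},
\]
where the rightmost equality uses~\eqref{eq-shift} once more (valid so long as $r+n\le n(d+1)$, which holds because $r<j\le n(d+1)-n$). In particular, $A_{*j+n}$ is a linear combination of strictly earlier columns, hence $j+n\in q$.

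The general statement now follows by a straightforward induction on~$k$. Assume $j+kn\in q$ with
\[
A_{*j+kn}\;=\;\sum_{r<j}\alpha_{r}\,A_{*r+kn}
\]
for some $k\ge 0$ with $j+(k+1)n\le n(d+1)$. Applying $S$ once more and invoking~\eqref{eq-shift} on each column gives
\[
A_{*j+(k+1)n}\;=\;\sum_{r<j}\alpha_{r}\,A_{*r+(k+1)n},
\]
establishing both that $j+(k+1)n\in q$ and that the coefficients $\alpha_{r}$ are preserved. The upper bound $k\le\bigl\lfloor\frac{n(d+1)-j}{n}\bigr\rfloor$ is precisely the range of $k$ for which $j+kn$ is a legal column index.

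The only real obstacle is verifying~\eqref{eq-shift} carefully, since it hinges on how the pattern of zeros above and below each shifted copy of $V^{T}$ is aligned; everything else is bookkeeping and linearity.
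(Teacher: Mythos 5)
Your argument is correct. The key identity $A_{*\,j+n}=S(A_{*\,j})$ for the down-shift map $S$ does hold for all $j\le nd$ (the entry lost at the bottom is zero precisely in that range, since a column in block $b\le d-1$ has its support in rows $b+1,\dots,b+d+1\le 2d$), and once that is checked, linearity of $S$ plus induction on $k$ gives both the membership $j+kn\in q$ and the preservation of the coefficients $\alpha_r$, exactly as you say. The paper itself does not prove this lemma but defers to Lemmas 17 and 19 of the cited reference, so there is no in-text proof to compare against; your shift-operator argument is the natural, self-contained one and is consistent with the structure of $A$ as defined in \eqref{eq-A}.
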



\begin{definition}
\label{def-bnp} Let $q$ be the set of non-pivotal indices. Let $q/(n)$ denote
the set of equivalence classes of $q \text{ modulo } n$. Then the set $\tilde
q=\{ \min\varrho\,| \varrho\in q/(n)\}$ will be called the set of \emph{basic
non-pivotal indices}. The remaining indices in $q$ will be called \emph{periodic non-pivotal indices}.
\end{definition}



\begin{example}
\textrm{\label{ex-period} For the matrix $A$ in Example~\ref{ex-A}, we have
$n=3$ and $q=\{ 8,9,11,12,14,15\}$.  Then $q/(n)=\big\{ \{8,11,14\},\,\{9,12,15\} \}\big\}$ and $\tilde q=\{8,9\}$. }
\qq \end{example}


\begin{lemma}
\label{lem-card} There are exactly $n-1$ basic non-pivotal indices: $|\tilde
q|=n-1$.
\end{lemma}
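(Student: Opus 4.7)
My plan is to exhibit $\tilde q$ explicitly via a suitably reduced $\mu$-basis of $\syz(\vv)$.

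First I would identify $A$ with the matrix of the $\K$-linear map $\Phi\colon \K[t]_d^n \to \K[t]_{2d}$, $(f_1, \ldots, f_n) \mapsto \sum_i v_i f_i$, in the natural monomial basis, so that $\ker A = \syz(\vv) \cap \K[t]_d^n$. The set $q$ of non-pivotal column indices is the intrinsic ``pivot set'' of the subspace $\ker A$ under the lex ordering on monomials $e_r t^k$ in which block $k$ dominates component $r$, and can therefore be read off from any $\K$-basis of $\ker A$.

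Take a $\mu$-basis $\uu_1, \ldots, \uu_{n-1}$ of $\syz(\vv)$ with $\mu_i = \deg \uu_i$ and $\sum_i \mu_i = d$. The defining identity $\uu_1 \wedge \cdots \wedge \uu_{n-1} = \lambda \vv$ (Definition~\ref{def-mu}), equated at top degree $d$, forces the leading coefficient vectors $\uu_i^{(\mu_i)} \in \K^n$ to be $\K$-linearly independent. I would then apply Gaussian reductions of the form $\uu_i \mapsto \uu_i - c\, t^{\mu_i - \mu_j}\uu_j$ to arrange that the top-nonzero positions $r_i$ of $\uu_i^{(\mu_i)}$ are pairwise distinct; these reductions preserve the $\mu$-basis property because, by linear independence of the leading vectors, no $\uu_i$ can drop in degree during the process. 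The same linear independence implies $\syz(\vv) \cap \K[t]_d^n = \bigoplus_i \K[t]_{d-\mu_i} \cdot \uu_i$ as $\K$-vector spaces, so $\{t^\ell \uu_i : 0 \le \ell \le d - \mu_i,\ 1 \le i \le n-1\}$ is a $\K$-basis of $\ker A$ whose pairwise distinct leading monomials $e_{r_i} t^{\mu_i + \ell}$ populate only the $n-1$ residue classes $\{r_1, \ldots, r_{n-1}\}$ mod $n$.

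By basis-independence of the pivot set, $q$ coincides with this collection of leading positions, so $q = \{(\mu_i + \ell) n + r_i : 0 \le \ell \le d - \mu_i,\ 1 \le i \le n-1\}$; the minimum index within each of these $n-1$ residue classes is attained at $\ell = 0$, giving $\tilde q = \{\mu_i n + r_i\}_{i=1}^{n-1}$ and therefore $|\tilde q| = n-1$.

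The main technical obstacle is the Gaussian-reduction step when several $\mu_i$ coincide: one must carefully choose the order of subtractions and verify, using linear independence of the leading vectors, that at each stage the new leading coefficient vector remains nonzero (so $\uu_i$ retains degree $\mu_i$) and that the resulting top-nonzero positions can be simultaneously made pairwise distinct across all $i$.
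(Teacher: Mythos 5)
Your argument is essentially correct, but it takes a genuinely different route from the one the paper relies on. The paper itself offers no proof of Lemma~\ref{lem-card}: it defers to \cite[Lemmas 17 and 19]{hhk2017}, where the count $|\tilde q|=n-1$ is obtained directly from the structure of $A$ (periodicity of the non-pivotal set, the rank statement of Proposition~\ref{lem-rank}, and an argument excluding the possibility that all $n$ residue classes are non-pivotal), after which a $\mu$-basis is \emph{extracted} from the basic non-pivotal columns in Theorem~\ref{thm-mu}. You run the logic in the opposite direction: you assume a $\mu$-basis exists, reduce it so that the leading coefficient vectors have pairwise distinct top-nonzero positions, and read off $q$ as the leading-position set of the monomial basis $\{t^\ell\uu_i\}$ of $\ker A\cong \syz(\vv)\cap\K[t]^n_d$. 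The individual steps are sound: the identification of the non-pivotal indices with the leading positions of kernel vectors is the standard echelon-form fact; linear independence of the leading vectors does follow from the wedge identity of Definition~\ref{def-mu} read at degree $d$; the reductions $\uu_i\mapsto\uu_i-c\,t^{\mu_i-\mu_j}\uu_j$, performed in nondecreasing order of $\mu_i$, preserve both the degrees and the wedge; and the predictable-degree property yields the direct-sum decomposition. What your route buys is a conceptual explanation of \emph{why} there are exactly $n-1$ classes (one per $\mu$-basis element), together with the sharper identities $\tilde q_i=\mu_i n+r_i$, which essentially re-derive Theorem~\ref{thm-mu}. What it costs is logical economy: you must import both the existence of a $\mu$-basis and the fact that it generates $\syz(\vv)$ over $\K[t]$ — your equality $\syz(\vv)\cap\K[t]^n_d=\bigoplus_i\K[t]_{d-\mu_i}\uu_i$ needs generation, not merely linear independence of the leading vectors, and the paper's Definition~\ref{def-mu} does not state generation (Remark~\ref{rem-mu} asserts it with a citation). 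Since the purpose of the appendix is to \emph{compute} a $\mu$-basis from $A$, a proof of Lemma~\ref{lem-card} that presupposes one runs against the grain of the development, though it is not circular given that existence is classical.

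Two small points to make explicit if you write this up: the lemma is used under the standing hypothesis $\gcd(\vv)=1$, which your proof needs for a $\mu$-basis to exist at all; and in the reduction step you should state that processing the $\uu_i$ in nondecreasing order of degree guarantees that every required elimination uses a $\uu_j$ with $\mu_j\le\mu_i$, so that $t^{\mu_i-\mu_j}$ is a genuine polynomial — with that ordering fixed, the case of coinciding degrees that you worry about causes no difficulty, because each elimination replaces a leading vector by a nonzero vector (by the preserved linear independence) whose top-nonzero position strictly decreases, so the process terminates with distinct positions.
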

The following useful proposition about the  rank of matrix $A$  can be
deduced from  the results about the rank of a different Sylvester-type matrix, $R$, given in Section 2 of
\cite{vard-1978}. For a direct  proof see \cite[Lemma 27 and Proposition 28]{omf-2017}.

\begin{proposition}[full rank]
\label{lem-rank} For a nonzero polynomial vector $\vv\in \pv_d$,
defined by \eqref{eq-a}, such that $\gcd(\vv)=1$, the corresponding ${(2d+1)\times
n(d+1)}$ 
matrix $A$, defined by \eqref{eq-A}, has rank $2d+1$.
\end{proposition}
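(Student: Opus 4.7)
My plan is to interpret $A$ as the matrix of a familiar multiplication map and to compute its rank via the rank--nullity theorem, using the $\mu$-basis structure of the syzygies of $\vv$. First I would identify the $(kn+i)$-th column of $A$ (for $k=0,\dots,d$ and $i=1,\dots,n$) with the coefficient vector of $t^k v_i(t)$ in the basis $\{1,t,\dots,t^{2d}\}$ of $\K[t]_{\le 2d}$. The column space of $A$ is therefore the image of the $\K$-linear map
\[
T\colon \K[t]_{\le d}^{\,n}\longrightarrow \K[t]_{\le 2d},\qquad T(p_1,\dots,p_n)=\sum_{i=1}^n p_i(t)\,v_i(t),
\]
under the identification $\K[t]_{\le d}^{\,n}\cong \K^{n(d+1)}$. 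Since the codomain has dimension $2d+1$ we immediately get $\rank A\le 2d+1$, and the task reduces to showing that $T$ is surjective.

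Rather than building surjections by hand, I would compute $\dim\ker T$ and then appeal to rank--nullity. By definition $\ker T$ consists of those syzygies $\mathbf p=(p_1,\dots,p_n)\in\syz(\vv)$ with $\deg p_i\le d$ for every $i$. Because $\gcd(\vv)=1$, a $\mu$-basis $\uu_1,\dots,\uu_{n-1}$ of $\vv$ exists (Definition~\ref{def-mu}) and freely generates $\syz(\vv)$; set $e_j=\deg\uu_j$, so $\sum_j e_j=d$. The column-wise degree-optimality of a $\mu$-basis (cf.\ Remark~\ref{rem-mu} and the references therein) yields the predictable-degree identity
\[
\deg\!\Big(\textstyle\sum_j q_j\uu_j\Big)=\max_j\big(\deg q_j+e_j\big),
\]
so the condition $\deg\mathbf p\le d$ is equivalent to $\deg q_j\le d-e_j$ for every $j$. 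Counting dimensions,
\[
\dim\ker T=\sum_{j=1}^{n-1}(d-e_j+1)=(n-1)(d+1)-d=(n-2)d+(n-1),
\]
and rank--nullity gives $\rank A=n(d+1)-[(n-2)d+(n-1)]=2d+1$.

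The main point to watch out for is potential circularity: since the appendix develops the \emph{algorithmic construction} of $\mu$-bases from $A$ itself, the proof must invoke only the \emph{existence} of a $\mu$-basis and the classical predictable-degree characterization, not any property derived from $A$. Both are well-documented in the literature cited in Remark~\ref{rem-mu} (in particular \cite{song-goldman-2009}) and can be taken as a black box. A more self-contained alternative would be to transcribe the rank analysis of Vardulakis' related Sylvester-type matrix $R$ from \cite{vard-1978}, as done in \cite{omf-2017}, but the $\mu$-basis route is shorter and conceptually cleaner.
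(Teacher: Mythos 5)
Your argument is correct, and it is genuinely different from what the paper does: the paper offers no self-contained proof of Proposition~\ref{lem-rank}, instead deducing it from the generalized resultant theorem of Vardulakis--Stoyle \cite{vard-1978} (which ties the rank deficiency of a related Sylvester-type matrix to the degree of the gcd) or citing the direct argument in \cite[Lemma 27 and Proposition 28]{omf-2017}. You instead read off $\rank A$ from rank--nullity applied to the pairing map $T(\mathbf p)=\langle\vv,\mathbf p\rangle$ on $\K[t]_{\le d}^{\,n}$, whose matrix is exactly $A$ by Lemma~\ref{lem-aA}, and compute $\dim\ker T$ as the dimension of the degree-$\le d$ part of $\syz(\vv)$ using a $\mu$-basis: with $\sum_j e_j=d$ each $e_j\le d$, so the count $\sum_j(d-e_j+1)=(n-1)(d+1)-d$ is valid termwise, and $n(d+1)-[(n-1)(d+1)-d]=2d+1$ matches the codomain dimension (consistent with Example~\ref{ex-period}, where $n=3$, $d=4$ gives $6$ non-pivotal columns out of $15$). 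The two external inputs you rely on --- existence of a $\mu$-basis for $\gcd(\vv)=1$ and the predictable-degree (column-reducedness) property of minimal bases --- are classical, are exactly what Remark~\ref{rem-mu} points to in \cite{song-goldman-2009}, and are independent of the matrix $A$, so your circularity concern is correctly resolved: you use only the existence of a $\mu$-basis, not the appendix's construction of one from $A$. What your route buys is a proof expressible entirely in the paper's own vocabulary ($\mu$-bases, syzygies, the $\sharp/\flat$ correspondence); what it costs is importing the predictable-degree identity as a black box, whereas the paper's cited references keep the argument within resultant-matrix theory.
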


\subsection{Isomorphism between $\mathbb{K}[t]_{\dg}^{\len}$ and $\mathbb{K}%
^{\len(\dg+1)}$} \label{ssect-iso}
Our second ingredient is an explicit isomorphism  between the  vector space
$\mathbb{K}[t]_{\dg}^{\len}$ of polynomial vectors of length $\len$ and degree $\dg$  and the vector space $\mathbb{K}^{\len(\dg+1)}$ of constant vectors of length $\len(\dg+1)$.  

For fixed $\len$ and $\dg$, we define  the  \emph{sharp map} $\sharp^{\len}_{\dg}\colon\mathbb{K}[t]_{\dg}^{\len}\to\K^{\len(\dg+1)}$ by sending a polynomial vector
\begin{equation}
\label{eq-h}
\hh=\underset{\ch}{\underbrace{\left[
\begin{array}
[c]{ccc}%
h_{10} & \dots & h_{1\dg}\\
\vdots & \vdots & \vdots\\
h_{\len 0} & \dots & h_{\len\dg}
\end{array}
\right]  }}\left[
\begin{array}
[c]{c}%
t^{0}\\
\vdots\\
t^{\dg}%
\end{array}
\right]
\in\K[t]^\len_\dg,%
\end{equation}
to a constant vector  
\begin{equation}
\label{iso1}\hh^{\sharp^{\len}_{\dg}}= \left[
\begin{array}
[c]{c}%
H_{*0}\\
\vdots\\
H_{*d}%
\end{array}
\right]\in \mathbb{K}^{\len(\dg+1)}
\end{equation}
obtained by stacking the $\dg+1$ columns of the coefficient matrix $\ch$.

For fixed $\len$ and $\dg$, we define  the \emph{flat map}
\[
\flat^{\len}_{\dg}\colon\mathbb{K}^{\len(\dg+1)} \to\K[t]^\len_\dg%
\]
by
\begin{equation}
\label{iso2}h\to h^{\flat^{\len}_{\dg}} =S^{\len}_{\dg}\, h
\end{equation}
where
\[
S^{\len}_{\dg}=\left[
\begin{array}
[c]{ccc}%
I_{\len} & tI_{\len} & \cdots t^{\dg}I_{\len}%
\end{array}
\right]  \in\mathbb{K}[t]^{\len\times \len(\dg+1)} 
\]
 with  $I_{\len}$ denoting the $\len\times \len$ identity matrix.

 It is easy to check that the \emph{flat and sharp maps are linear and they are the inverse of each other}.
 {For the sake of
notational simplicity, we will often write $\sharp$, $\flat$ and $S$ instead
of $\sharp^{\len}_{\dg}$, $\flat^{\len}_{\dg}$ and~$S^{\len}_{\dg}$ when the values of $\len$
and $\dg$ are clear from the context.}


\begin{example}
\label{ex-sharp-flat}\textrm{For $\mathbf{h}\in\mathbb{Q}^{3}_{3}[s]$ given
by
\[
\mathbf{h}=\left[
\begin{array}
[c]{c}%
9-12t-t^{2}\\
8+15t\\
-7-5t+t^2
\end{array}
\right] = \left[
\begin{array}
[c]{rrr}%
9&-12&-1\\
8&15&0\\
-7&-5&1
\end{array}
\right]\left[
\begin{array}
[c]{c}%
1\\
t\\
t^2
\end{array}
\right]  
\]
we have
\[
\mathbf{h}^{\sharp}=[9,\,8,\,-7,\,-12,\,15,\,-5,\,-1,\,0,\,1]^{T}.
\]
Note that
\[
\mathbf{h}=(\mathbf{h}^{\sharp})^{\flat}=S\, \mathbf{h}^{\sharp}=\left[
\begin{array}
[c]{ccc}%
I_{3} & tI_{3} & t^{2}I_{3}
\end{array}
\right]  \mathbf{h}^{\sharp}.
\]
}
\qq \end{example}

The following lemma shows that, with  respect to the isomorphisms $\sharp$ and $\flat$, the $\mathbb{K}$-linear
map $\mathbb{K}[t]_{d}^{n}\to\mathbb{K}[t]_{2d}$ defined by the scalar product with a fixed vector $\vv\in \pv_d$ corresponds 
to the $\mathbb{K}$ linear map $A\colon\mathbb{K}^{n(d+1)}\to\mathbb{K}%
^{2d+1}$ in the following sense:
\begin{lemma}\label{lem-aA}
 For a nonzero polynomial vector $\vv\in\pv_d$,
defined by \eqref{eq-a}, the corresponding ${(2d+1)\times
n(d+1)}$ 
matrix $A$, defined by \eqref{eq-A},  a constant vector $h\in\K^{n(d+1)}$ and a polynomial vector $\hh\in\pv_d$, we have:
\begin{equation}
\label{eq-aA}\big<\vv,h^{\flat^{n}_{d}}\big>=(A h)^{\flat^{1}_{2d}} \text{ and } \big<\vv, \hh\big>^{\sharp^{1}_{2d}}=A \left(\hh^{\sharp^{n}_{d}}\right)
\end{equation}
\end{lemma}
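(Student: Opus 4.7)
The plan is to prove the second identity $A\hh^{\sharp^{n}_{d}}=\langle\vv,\hh\rangle^{\sharp^{1}_{2d}}$ by a direct indexing computation, and to get the first identity for free. Indeed, since $\sharp^{n}_{d}$ and $\flat^{n}_{d}$ are mutually inverse linear isomorphisms (as noted immediately after (\ref{iso2})), substituting $\hh:=h^{\flat^{n}_{d}}$ into the sharp form and applying $\flat^{1}_{2d}$ to both sides recovers $\langle\vv,h^{\flat^{n}_{d}}\rangle=(Ah)^{\flat^{1}_{2d}}$, which is the first identity. So I only need to verify the sharp form.

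For the sharp form, I will expand everything in coordinates. Writing $\vv$ as in (\ref{eq-a}) and $\hh$ as in (\ref{eq-h}), the scalar product $\langle\vv,\hh\rangle$ has $t^{m}$-coefficient
\[
c_m \;=\; \sum_{i=1}^{n}\sum_{\substack{j+k=m\\ 0\le j,k\le d}} v_{ij}\,h_{ik}, \qquad 0\le m\le 2d,
\]
so by the definition of $\sharp^{1}_{2d}$ the vector $\langle\vv,\hh\rangle^{\sharp^{1}_{2d}}\in\K^{2d+1}$ carries $c_m$ in its $(m+1)$-st entry. On the other side, the construction (\ref{eq-A}) shows that the $(k+1)$-st block of $n$ columns of $A$ is the matrix $V^{T}$ placed into rows $k+1,\ldots,k+d+1$; equivalently, $A_{r,\,kn+i}=v_{i,\,r-k-1}$ whenever $0\le r-k-1\le d$ and zero otherwise, for $i=1,\ldots,n$. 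Since $\hh^{\sharp^{n}_{d}}$ carries $h_{ik}$ in position $kn+i$, writing $m=r-1$ and $j=m-k$ converts the expansion
\[
(A\hh^{\sharp^{n}_{d}})_{m+1} \;=\; \sum_{k=0}^{d}\sum_{i=1}^{n} A_{m+1,\,kn+i}\,h_{ik}
\]
into exactly the double sum defining $c_m$, so the two vectors agree row by row.

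The only obstacle is index bookkeeping: the ``stack the columns of the coefficient matrix'' convention for $\sharp$, the shift-and-repeat block structure of $A$, and the convolution pattern of the polynomial product must all be aligned. There is no deeper content — the matrix $A$ was constructed in Section~\ref{ssect-A} precisely so that left multiplication by $A$ on the sharp side realizes scalar multiplication by $\vv$ on the polynomial side, and this lemma is the routine verification that it does.
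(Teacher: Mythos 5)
Your proof is correct and structurally the same as the paper's: both establish one of the two identities in \eqref{eq-aA} by direct verification and transfer it to the other using the fact that $\sharp$ and $\flat$ are mutually inverse linear maps. The only difference is the direction — the paper verifies the first identity directly (by citing Lemma 10 of \cite{hhk2017}) and deduces the second, whereas you verify the second identity by an explicit (and correct) index computation and deduce the first, which makes your version more self-contained.
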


The proof of Lemma~\ref{lem-aA} is straightforward. The proof of the first
equality is explicitly spelled out in \cite{hhk2017} (see Lemma 10). The
second equality follows from the first and the fact that $\flat^{m}_{t}$
and~$\sharp^{m}_{t}$ are mutually inverse maps.

\begin{example}
\label{ex-Aa}\rm Consider  $\vv$  and its associated matrix $A$ in Example \ref{ex-A}.
Let $$h=[1,2,3,4,5,6,7,8,9,10,11,12,13,14,15]^{T}.$$ Then
\[
Ah= [26,60,98,143,194,57,62,63,42]^{T}%
\]
and so
\[
(Ah)^{\flat_{2d}^{1}} = S_{8}^{1}(Ah) = 26 + 60t + 98t^{2} + 143t^{3} + 194t^{4}
+ 57t^{5} + 62t^{6} + 63t^{7} + 42t^{8}.
\]
We invite the reader to check that since
\[
h^{\flat^{n}_{d}} = S_{4}^{3}h = \left[
\begin{array}
[c]{c}%
1+4t+7t^{2}+10t^{3}+13t^{4}\\
2+5t+8t^{2}+11t^{3}+14t^{4}\\
3+6t+9t^{2}+12t^{3}+15t^{4}%
\end{array}
\right],
\]
we have
\[
\big<\vv,h^{\flat^n_d}\big> =(Ah)^{\flat^1_{2d}}.
\]
\qq \end{example}

\subsection{The minimal B\'ezout vector theorem}

\label{ssect-bez} In this section, we construct a minimal-degree B\'ezout vector of $\vv$ by finding an appropriate solution to the linear equation
\begin{equation}
\label{eq-bezA}A\,b=e_{1}, \text{ where } e_{1}=[1,0,\,\dots,\, 0]^{T}%
\in\mathbb{K}^{2d+1}.
\end{equation}
The following lemma establishes a one-to-one correspondence between the set
$\operatorname{Bez}_{d}(\vv)$ of B\'ezout vectors of $\vv$ of degree \emph{at most} $d$ and the set
of solutions to \eqref{eq-bezA}.

\begin{lemma}
\label{lem-bez} Let $\vv\in\mathbb{K}[t]_{d}^{n}$ be a nonzero vector
such that $\gcd(\vv)=1$. Then $\bb\in\mathbb{K}[t]^{n}_{d}$
belongs to $\operatorname{Bez}_{d}(\vv)$ if and only if $\bb^{\sharp}$ is a solution of \eqref{eq-bezA}. Also $b\in\mathbb{K}^{n(d+1)}$
is a solution of \eqref{eq-bezA} if and only if $b^{\flat}$ belongs to
$\operatorname{Bez}_{d}(\vv)$.

\end{lemma}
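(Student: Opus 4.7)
The plan is to leverage the correspondence between scalar products with $\vv$ and multiplication by $A$ established in Lemma~\ref{lem-aA}, combined with the fact that $\sharp^n_d$ and $\flat^n_d$ are mutually inverse linear bijections between $\K[t]^n_d$ and $\K^{n(d+1)}$. Since $\gcd(\vv)=1$, Definition~\ref{def-bez} characterizes a normalized B\'ezout vector of $\vv$ as a vector $\bb \in \K[t]^n$ satisfying $\big<\vv,\bb\big> = 1$; accordingly, $\operatorname{Bez}_d(\vv)$ consists exactly of those such vectors with $\deg \bb \leq d$, which is precisely the domain of $\sharp^n_d$.

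First I would record the trivial but crucial identity $1^{\sharp^1_{2d}} = e_1 \in \K^{2d+1}$, which merely encodes the fact that the constant polynomial $1$, viewed as an element of $\K[t]_{2d}$, has coefficient vector $(1,0,\dots,0)^T$. The forward implication of the first equivalence then reduces to a one-line computation: for any $\bb \in \operatorname{Bez}_d(\vv)$, applying the second identity in \eqref{eq-aA} yields
\[
A(\bb^\sharp) \;=\; \big<\vv,\bb\big>^{\sharp} \;=\; 1^{\sharp} \;=\; e_1,
\]
so $\bb^\sharp$ solves \eqref{eq-bezA}.

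For the converse, suppose $b \in \K^{n(d+1)}$ satisfies $Ab = e_1$. Applying the first identity in \eqref{eq-aA} produces
\[
\big<\vv, b^\flat\big> \;=\; (Ab)^\flat \;=\; e_1^\flat \;=\; 1,
\]
so $b^\flat$ is a normalized B\'ezout vector of $\vv$ lying in $\K[t]^n_d$, hence $b^\flat \in \operatorname{Bez}_d(\vv)$. Because $\sharp^n_d$ and $\flat^n_d$ are mutual inverses, these two implications combine to yield both equivalences stated in the lemma: the first (starting with $\bb$) and the second (starting with $b$) are just the same statement read through the dictionary in opposite directions.

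I do not anticipate any serious obstacle; the lemma is essentially a bookkeeping translation through the $\sharp/\flat$ isomorphisms, and the only mildly delicate point is that the ambient degree bound must be respected, so that the equivalence is correctly set up between $\operatorname{Bez}_d(\vv) \subset \K[t]^n_d$ and solutions $b \in \K^{n(d+1)}$ of \eqref{eq-bezA}. This matching is automatic from the definitions of $\sharp^n_d$ and $\flat^n_d$ and requires no additional argument.
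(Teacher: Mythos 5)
Your proof is correct and follows the same route as the paper: the paper's proof is the one-line remark that the lemma ``follows immediately from \eqref{eq-aA} and the observation that $e_{1}^{\flat^{1}_{2d}}=1$,'' which is exactly the computation you carry out. Your extra care about the degree bound and the normalization $\big<\vv,\bb\big>=1$ (so that $\operatorname{Bez}_d(\vv)$ is the set of \emph{normalized} B\'ezout vectors of degree at most $d$) correctly fills in what the paper leaves implicit.
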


\begin{proof}
Follows immediately from \eqref{eq-aA} and the observation that $e_{1}%
^{\flat^{1}_{2d}}=1$.
\end{proof}

\medskip

Now, the goal is to construct a solution $b$  of  \eqref{eq-bezA}, such that
$b^{\flat}$ is a {B\'ezout} vector of $\vv$ of minimal degree. To accomplish this,
we recall that, when $\gcd(\vv)=1$, Proposition~\ref{lem-rank} asserts
that $\mathrm{rank} (A)=2d+1$. Therefore, $A$ has exactly $2d+1$ pivotal
indices, which we can list in the increasing order $p=\{p_{1},\dots,
p_{2d+1}\}$. The corresponding columns of matrix $A$ form a basis of~$\mathbb{K}^{2d+1}$ and, therefore, $e_{1}\in\mathbb{K}^{2d+1}$ can be
expressed as a unique linear combination of the pivotal columns:
\begin{equation}
\label{eq-e1comb}e_{1}=\sum_{j=1}^{2d+1}\alpha_{j}A_{*p_{j}}.
\end{equation}
Define vector $b\in\mathbb{K}^{n(d+1)}$ by setting its $p_{j}$-th element to be
$\alpha_{j}$ and all other elements to be 0. We prove that $\bb=b^{\flat}$ is a {B\'ezout} vector of $\vv$ of the minimal degree.
\begin{theorem}
[Minimal-Degree {B\'ezout} Vector]\label{thm-bez} Let $\vv\in\mathbb{K}%
[t]^{n}_{d}$ be a polynomial vector with $\gcd(\vv)=1$, and let $A$ be the
corresponding matrix defined by \eqref{eq-A}. Let $p=\{p_{1},\dots,
p_{2d+1}\}$ be the pivotal indices of $A$, and let $\alpha_{1},\dots
,\alpha_{2d+1}\in\mathbb{K}$ be defined by the unique expression
\eqref{eq-e1comb} of the vector $e_{1}\in\mathbb{K}^{2d+1}$ as a linear
combination of the pivotal columns of $A$. Define vector~$b\in\mathbb{K}%
^{2d+1}$ by setting its $p_{j}$-th element to be $\alpha_{j}$ for $j=1,\dots,
2d+1$ and all other elements to be 0, and let~$\mathbf{b}=b^{\flat}$. Then

\begin{enumerate}
\item $\bb\in\operatorname{Bez}_d(\vv)$

\item $\deg(\bb)=\displaystyle{\min_{\bb^{\prime}\in
\operatorname{Bez}(\vv)}} \deg(\mathbf{\bb}^{\prime}).$ 
\end{enumerate}
\end{theorem}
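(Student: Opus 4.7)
Part~1 will be immediate: by construction, $Ab = \sum_{j=1}^{2d+1} \alpha_j A_{*p_j} = e_1$, and Lemma~\ref{lem-bez} then gives $\bb = b^\flat \in \operatorname{Bez}_d(\vv) \subseteq \operatorname{Bez}(\vv)$.

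My strategy for Part~2 is to show that among all solutions $x \in \K^{n(d+1)}$ of $Ax = e_1$, the pivotal-supported one $b$ has the smallest ``last nonzero block,'' which under $\flat$ translates to smallest degree. I will take an arbitrary $\bb' \in \operatorname{Bez}(\vv)$. If $\deg\bb' \geq d$ there is nothing to prove, since $\deg\bb \leq d$ by construction; so I may assume $d' := \deg \bb' < d$ and pad $\bb'$ to lie in $\K[t]^n_d$. By Lemma~\ref{lem-bez}, $b' := (\bb')^\sharp$ then solves $Ab' = e_1$ and satisfies $\operatorname{supp}(b') \subseteq \{1,\ldots,n(d'+1)\}$. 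Since $b - b' \in \ker A$, I will be done once I show that $\operatorname{supp}(b)$ also sits in $\{1,\ldots,n(d'+1)\}$.

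The main step is to exhibit an explicit basis of $\ker A$ adapted to the pivot/non-pivot partition. For each $j \in q$, I will write $A_{*j} = \sum_{i \in p,\, i < j} \gamma_{ji}\, A_{*i}$ and set
\[
k_j \;=\; e_j - \sum_{i \in p,\, i < j} \gamma_{ji}\, e_i.
\]
Then $k_j \in \ker A$, $\operatorname{supp}(k_j) \subseteq \{1,\ldots,j\}$, and the only non-pivotal coordinate of $k_j$ that is nonzero is the $j$-th (equal to $1$). Proposition~\ref{lem-rank} supplies $|q| = n(d+1)-(2d+1) = \dim \ker A$, and linear independence of the $k_j$ comes for free by reading off non-pivotal coordinates, so the $\{k_j\}_{j \in q}$ form a basis.

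With this basis in hand, I will decompose $b - b' = \sum_{j \in q} \lambda_j k_j$; reading off the $j$-th coordinate for $j \in q$ forces $\lambda_j = -b'_j$, because $b$ vanishes on $q$. Hence $b = b' - \sum_{j \in q} b'_j k_j$, and each nonzero summand satisfies $j \leq n(d'+1)$ (as $b'_j \neq 0$ places $j$ in $\operatorname{supp}(b')$) together with $\operatorname{supp}(k_j) \subseteq \{1,\ldots,j\}$. So the total support of $b$ stays inside $\{1,\ldots,n(d'+1)\}$, yielding $\deg \bb \leq d'$. The only genuine obstacle is the dimension count $|q| = \dim \ker A$, which comes directly from Proposition~\ref{lem-rank}; everything else is routine manipulation through the $\sharp/\flat$ isomorphism.
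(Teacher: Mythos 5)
Your proof is correct, and Part~2 takes a genuinely different route from the paper's. The paper argues by contradiction: if some $\bb'\in\operatorname{Bez}(\vv)$ had $\deg\bb'<\deg\bb$, then writing $e_1=\sum_{j\le r} b'_j A_{*j}$ (with $r=\max\operatorname{supp}(b')<p_k$, where $p_k$ is the last pivotal index with $\alpha_k\neq 0$) and replacing each non-pivotal column by its combination of preceding pivotal columns produces an expression of $e_1$ in the pivotal columns $A_{*p_1},\dots,A_{*p_{k-1}}$ only; uniqueness of coordinates with respect to the linearly independent pivotal columns then forces $\alpha_k=0$, a contradiction. You instead construct the explicit kernel basis $\{k_j\}_{j\in q}$ adapted to the pivot/non-pivot partition, identify the coefficients of $b-b'$ in that basis by reading off the non-pivotal coordinates, and conclude directly that $\operatorname{supp}(b)$ is contained in the block $\{1,\dots,n(d'+1)\}$ containing $\operatorname{supp}(b')$. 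Both arguments rest on exactly the same ingredients --- Lemma~\ref{lem-bez}, the reduction of non-pivotal columns to preceding pivotal ones, and $\operatorname{rank}A=2d+1$ from Proposition~\ref{lem-rank} (which you need for the dimension count $|q|=\dim\ker A$, and which the paper needs for the pivotal columns to form a basis of $\K^{2d+1}$) --- so they are dual packagings of one linear-algebra fact. What your version buys is a sharper structural statement (that $b$ is the unique solution of $Ax=e_1$ supported on the pivotal indices, and that its last nonzero coordinate occurs in the earliest possible block among all solutions), and your vectors $k_j$ are, up to sign and normalization, the same kernel elements that underlie the $\mu$-basis construction of Theorem~\ref{thm-mu}, which makes the connection between the two appendix theorems explicit; the paper's contradiction argument is the more economical of the two.
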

\begin{proof}\hfill
\begin{enumerate}
\item From \eqref{eq-e1comb}, it follows immediately that $Ab=e_{1}$. 
Therefore, by Lemma~\ref{lem-bez}, we have that~$\bb=b^{\flat}%
\in\operatorname{Bez}_{d}(\vv)$.

\item To show that $\mathbf{b}$ is of minimal degree, we rewrite
\eqref{eq-e1comb} as
\begin{equation}
\label{eq-e1comb1}e_{1}=\sum_{j=1}^{k}\alpha_{j} A_{*p_{j}},
\end{equation}
where $k$ is the largest integer between 1 and $2d+1$, such that $\alpha
_{k}\neq0$. Then the last nonzero entry of $b$ appears in $p_{k}$-th position
and, therefore,
\begin{equation}
\label{degw}\deg(\mathbf{b})=\deg(b^{\flat})=\big\lceil{p_{k}/n}\big\rceil-1.
\end{equation}
Assume that $\mathbf{b}^{\prime}\in\operatorname{Bez}(\vv)$ is such
that $\deg(\bb^{\prime})<\deg(\bb)$. Then $\mathbf{b}^{\prime
}\in\operatorname{Bez}_{d}(\vv)$ and, therefore, $A\,b^{\prime}=e_{1}$,
for $b^{\prime}=\mathbf{b}^{\prime\sharp}=[b^{\prime}_{1},\dots, b^{\prime
}_{n(d+1)}]\in\mathbb{K}^{n(d+1)}$. Then
\begin{equation}
\label{eq-e1v1}e_{1}=\sum_{j=1}^{n(d+1)}\,b^{\prime}_{j}A_{*j} =\sum_{j=1}%
^{r}b^{\prime}_{j} A_{*j},
\end{equation}
where $r$ is the largest integer between 1 and $n(d+1)$, such that $b^{\prime
}_{r}\neq0$.  Then
\begin{equation}
\label{degw'}\deg(\bb^{\prime})=\big\lceil{r/n}\big\rceil-1
\end{equation}
and since we assumed that $\deg(\bb^{\prime})<\deg(\bb)$, we
conclude from \eqref{degw} and \eqref{degw'} that $r<p_{k}$.

On the other hand, since all non-pivotal columns are linear combinations of
the preceding pivotal columns, we can rewrite \eqref{eq-e1v1} as
\begin{equation}
\label{eq-e1v2}e_{1}=\sum_{j\in\{1,\dots, 2d\,|\, p_{j}\leq r<p_{k}\}}%
\alpha^{\prime}_{j} A_{*p_{j}}=\sum_{j=1}^{k-1}\alpha^{\prime}_{j} A_{*p_{j}}.
\end{equation}
By the uniqueness of the representation of $e_1$ as a linear combination of the $A_{\ast p_j}$, the coefficients in the expansions \eqref{eq-e1comb1} and
\eqref{eq-e1v2} must be equal, but $\alpha_{k}\neq0$ in \eqref{eq-e1comb1}. Contradiction.
\end{enumerate}
\end{proof}

The algorithm presented in  \cite{omf-2017,hough-2018}  exploits the fact that
the coefficients $\alpha$'s in \eqref{eq-e1comb1}, needed to construct the minimal-degree
{B\'ezout} vector of $\vv$ prescribed by Theorem~\ref{thm-bez}, can be read off the reduced row-echelon form $[\hat A| \hat
b]$ of the augmented matrix $[A|e_{1}]$.

\subsection{The $\mu$-bases theorem}
\label{ssect-mu}
In \cite{hhk2017}, it is  shown that the coefficients of a $\mu$-basis of $\vv\in\pv_d$ can be read off the \emph{basic}
non-pivotal columns of matrix $A$ (recall Definition~\ref{def-bnp}). According to Lemma~\ref{lem-card}, the matrix $A$ has exactly $n-1$ basic
non-pivotal columns.

\begin{theorem}
[$\mu$-Basis]\label{thm-mu} Let $\vv\in\mathbb{K}[t]^{n}_{d}$ be a
polynomial vector, and let $A$ be the corresponding matrix defined by \eqref{eq-A}.
Let $\tilde q=[\tilde q_{1},\dots, \tilde q_{n-1}]$ be the basic non-pivotal
indices of $A$, ordered increasingly. For $i=1,\dots,{n-1}$, a basic non-pivotal
column $A_{*\tilde q_{i}}$ is a linear combination of the previous pivotal
columns:
\begin{equation}
\label{eqAij}A_{*\tilde q_{i}}=\sum_{\{r\in p\,|\, r<\tilde q_{i}\}}
\alpha_{ir} A_{*r},
\end{equation}
for some $\alpha_{ir}\in\mathbb{K}$. Define vector $u_{i}\in\mathbb{K}^{n(d+1)}$
by setting its $\tilde q_{i}$-th element to be $1$, its $r$-th element to be
$-\alpha_{ir}$ for $r\in p$ such that $p_{j}<\tilde q_{i}$, and all other
elements to be 0. Then the set of polynomial vectors
\[
\mathbf{u}_{1}=u_{1}^{\flat},\quad\dots\quad, \mathbf{u}_{n-1}=u_{n-1}^{\flat}%
\]
is a degree-ordered $\mu$-basis of $\vv$.
\end{theorem}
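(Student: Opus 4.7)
The plan is to verify the two defining properties of a $\mu$-basis from Definition~\ref{def-mu}: (a) $\sum_{i=1}^{n-1}\deg\mathbf{u}_i = d$, and (b) $\mathbf{u}_1 \wedge \cdots \wedge \mathbf{u}_{n-1} = \lambda\vv$ for some nonzero $\lambda \in \K$. The fact that each $\mathbf{u}_i$ is a syzygy of $\vv$ comes essentially for free: by construction $u_i$ encodes the linear relation \eqref{eqAij}, so $Au_i=0$, and Lemma~\ref{lem-aA} immediately gives $\langle\vv,\mathbf{u}_i\rangle = (Au_i)^{\flat^1_{2d}} = 0$.

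For (a), I would write $\tilde q_i = nm_i + r_i$ with $r_i \in \{1,\dots,n\}$, so that $\deg\mathbf{u}_i = m_i = \lceil\tilde q_i/n\rceil - 1$. A counting argument then completes the step: by Proposition~\ref{lem-rank} the nullity of $A$ equals $n(d+1)-(2d+1) = (n-2)d + (n-1)$, and by Lemma~\ref{periodic} together with Lemma~\ref{lem-card} the non-pivotal indices partition into exactly $n-1$ arithmetic progressions of lengths $k_i + 1 = d - m_i + 1$, where $k_i = \lfloor(n(d+1)-\tilde q_i)/n\rfloor$. Equating cardinalities yields $\sum m_i = d$.

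For (b), the crux is $\K(t)$-linear independence of $\mathbf{u}_1,\ldots,\mathbf{u}_{n-1}$, which I would establish by examining leading coefficients. The coefficient $c_i$ of $t^{m_i}$ in $\mathbf{u}_i$ has a $1$ in position $r_i$ and zeros in every position greater than $r_i$, because entries of $u_i$ at positions exceeding $\tilde q_i$ vanish by construction. Since the $\tilde q_i$ represent distinct residue classes mod $n$, the values $r_1,\ldots,r_{n-1}$ are distinct elements of $\{1,\dots,n\}$, and the $(n-1)\times(n-1)$ submatrix of $[c_1\cdots c_{n-1}]$ on rows $r_1,\ldots,r_{n-1}$, after reordering columns by decreasing $r_i$, is lower triangular with unit diagonal. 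A standard leading-term argument then forces linear independence. Consequently $\mathbf{h}:=\mathbf{u}_1\wedge\cdots\wedge\mathbf{u}_{n-1}$ is nonzero; since both $\mathbf{h}$ and $\vv$ annihilate every $\mathbf{u}_i$, they span the same one-dimensional left null space of $[\mathbf{u}_1,\ldots,\mathbf{u}_{n-1}]$ over $\K(t)$, so $\mathbf{h}=\mu(t)\vv$ for some $\mu\in\K(t)$. Writing $\mu=p/q$ in lowest terms, $\gcd(\vv)=1$ forces $q$ to be a unit, and the degree bound $\deg\mathbf{h}\leq \sum\deg\mathbf{u}_i = d = \deg\vv$ forces $p$ to be a constant, hence $\mu$ is a nonzero element of $\K$. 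The main obstacle I anticipate is establishing the staircase structure of the $c_i$: one must carefully use that $u_i$ vanishes at \emph{every} position strictly greater than $\tilde q_i$, including pivotal positions within the same $n$-block as $\tilde q_i$, to obtain the required zeros above the diagonal.
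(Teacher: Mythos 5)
Your proposal is correct, but it takes a genuinely different route from the paper: the paper's own proof simply cites Theorem~27 of \cite{hhk2017} for the fact that $\mathbf{u}_1,\dots,\mathbf{u}_{n-1}$ form a $\mu$-basis, and only supplies the short argument that $\deg\mathbf{u}_i=\lceil\tilde q_i/n\rceil-1$ implies the degree ordering. You instead give a self-contained verification of Definition~\ref{def-mu}. Your two key steps both check out: (a) the nullity count $n(d+1)-(2d+1)$ from Proposition~\ref{lem-rank}, combined with the fact (via Lemma~\ref{periodic} and minimality of the $\tilde q_i$) that each of the $n-1$ residue classes of non-pivotal indices is exactly the arithmetic progression $\{\tilde q_i+kn\}$ of length $d-m_i+1$, does yield $\sum_i\deg\mathbf{u}_i=d$; and (b) the leading coefficient vectors $c_i$ are indeed supported in positions $\le r_i$ with a $1$ at $r_i$ and distinct residues $r_i$, giving the triangular structure, $\K(t)$-linear independence, a nonzero wedge $\mathbf{h}$ lying in the one-dimensional left kernel of $[\mathbf{u}_1\cdots\mathbf{u}_{n-1}]$, and then the $\gcd(\vv)=1$ and degree arguments pin down $\mathbf{h}=\lambda\vv$ with $\lambda\in\K^\ast$. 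What your approach buys is independence from the external reference, at the cost of needing the full-rank result (Proposition~\ref{lem-rank}) and hence the hypothesis $\gcd(\vv)=1$, which the theorem statement leaves implicit (it is forced by Definition~\ref{def-mu} anyway). Two cosmetic remarks: you should state explicitly that the formula $\deg\mathbf{u}_i=\lceil\tilde q_i/n\rceil-1$ together with $\tilde q_1<\dots<\tilde q_{n-1}$ gives the claimed degree ordering, and in the leading-term argument it is worth spelling out that a dependence $\sum f_i\mathbf{u}_i=0$ is ruled out by looking at the coefficient of $t^{D}$ with $D=\max_i(\deg f_i+m_i)$, which is a nontrivial $\K$-combination of the independent vectors $c_i$.
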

\begin{proof}
The fact that $\mathbf{u}_{1}=u_{1}^{\flat},\quad\dots\quad, \mathbf{u}%
_{n-1}=u_{n-1}^{\flat}$ is a $\mu$-basis of $\vv$ is the statement of Theorem~27 of
\cite{hhk2017}. By construction, the last nonzero entry of vector $u_{i} $ is
in the $\tilde q_{i}$-th position, and therefore for $i=1,\dots, n-1$,
\[
\deg(\mathbf{u}_{i}) =\deg(u_{i}^{\flat})=\big\lceil{\tilde q_{i}%
/n}\big\rceil-1.
\]
Since the indices in $\tilde q$ are ordered increasingly, the vectors
$\mathbf{u}_{1},\, \dots,\, \mathbf{u}_{n-1}$ are degree-ordered.
\end{proof}


The algorithm presented in \cite{hhk2017}   exploits the fact that
the coefficients $\alpha$'s in \eqref{eqAij}, needed to construct the $\mu$-basis of $\vv$ prescribed by Theorem~\ref{thm-mu},
can be read off the reduced row-echelon form $\hat A$ of $A$.

\bibliographystyle{acm}

\end{document}